\definecolor{blue}{HTML}{1F77B4}
\definecolor{orange}{HTML}{FF7F0E}
\definecolor{green}{HTML}{2CA02C}
\pgfplotsset{compat=1.14}
\newtheorem{tm}{Theorem}[section]
\newtheorem{prop}{Proposition}[section]
\newtheorem{coro}{Corollary}[section]
\newtheorem{lem}{Lemma}[section]
\newtheorem{rk}{Remark}[section]
\numberwithin{equation}{section}
\numberwithin{tm}{section}
\def\p{\partial}
\def\Om{\Omega}
\renewcommand{\email}[2][]{%
  \ifx\emails\@empty\relax\else{\g@addto@macro\emails{,\space}}\fi%
  \@ifnotempty{#1}{\g@addto@macro\emails{\textrm{(#1)}\space}}%
  \g@addto@macro\emails{#2}%
}
\title{Long-time dynamics of a parabolic-ODE SIS epidemic model with saturated incidence mechanism}%
\author{Rui Peng$\ ^1$}
\address{$\ ^1$School of Mathematical Sciences, Zhejiang Normal University, Jinhua, 321004, Zhejiang, China.}
\email{pengrui\_seu@163.com}
\author{Rachidi Salako$\ ^2$}
\address{$\ ^2$Department of Mathematical Sciences, University of Nevada Las Vegas, Las Vegas, NV 89154,
USA.}
\email{rachidi.salako@unlv.edu}
\author{Yixiang Wu$\ ^{3, \dagger}$}
\address{$\ ^3$Department of Mathematical Sciences, Middle Tennessee State University, Murfreesboro, TN
37132, USA.}
\email{yixiang.wu@mtsu.edu}
\begin{document}





\thanks{\footnotesize R. Peng was supported by NSF of China (No. 12271486, 12171176).}
\thanks{\footnotesize  $ ^\dagger$ Corresponding author.}

\begin{abstract}
In this paper, we investigate a parabolic-ODE SIS epidemic model with no-flux boundary conditions in a heterogeneous environment. The model incorporates a saturated infection mechanism  \({SI}/(m(x) + S + I)\) with \(m \geq,\,\not\equiv 0\).  This study is motivated by disease control strategies, such as quarantine and lockdown, that 
limit population movement. We examine two scenarios: one where the movement of the susceptible population is 
restricted, and another where the movement of the infected population is 
neglected. We establish the long-term dynamics of the solutions in each scenario. Compared to previous studies that assume the absence of a saturated incidence function (i.e., $m\equiv 0$), our findings highlight the novel and significant interplay between total population size, transmission risk level, and the saturated incidence function in influencing disease persistence, extinction, and spatial distribution. Numerical simulations are performed to validate the theoretical results, and the implications of the results are discussed in the context of disease control and eradication strategies.

\noindent \textbf{Keywords:} Parabolic-ODE SIS epidemic model, saturated incidence mechanism, long-term dynamics, disease persistence/extinction. 

\noindent \textbf{MSC 2020:} 35M13, 35B40, 92D25.
\end{abstract}



\date{}
\maketitle
\vspace{-1cm}
\section{Introduction}
Traditional epidemic models often assume a well-mixed environment: every individual in a certain area has an equal chance of interacting with any other individual. This simplifies the modeling process by adopting ordinary differential equations (ODEs) to describe disease transmissions \cite{Kermack1927}. However, this assumption frequently fails to accurately represent real-world situations, especially in geographically fragmented populations. In such cases, individual interactions are naturally constrained by geographical barriers, even though movement between regions remains possible. This mobility introduces spatial heterogeneity into disease transmission dynamics, leading to region-specific variations in transmission patterns and population characteristics \cite{Turchin2003}. The COVID-19 pandemic has highlighted the crucial importance of understanding environmental heterogeneity to fully comprehend the complex dynamics of disease transmissions \cite{Bellomo,Bertaglia,LloydSmith2005}. This pandemic has also demonstrated the significant influence of population movement on disease spread \cite{Kraemer2020, Chinazzi2020}. Additionally, it has also shown the effectiveness of disease control measures such as lockdown and quarantines in reducing disease transmissions.  

To address the complexities arising from spatial heterogeneity, researchers have complemented the study of traditional ODE models with investigations of reaction-diffusion equations. This approach provides a more comprehensive framework by accounting for continuous spatial and temporal variations, enabling a more accurate representation of disease spread across heterogeneous environments \cite{Murray2002}. These systems have become increasingly popular in recent studies  \cite{Diekmann2012}.
Building on this conceptual framework, Allen et al. \cite{ABL}  conducted a pioneering work on a reaction-diffusion SIS (susceptible-infected-susceptible) epidemic model with frequency-dependent incidence function $SI/(S+I)$ in spatially heterogeneous environment:
\begin{equation}\label{SIS1}
\begin{cases}
\displaystyle
    \p_tS - d_S \Delta S = -\frac{\beta(x)SI}{S+I} + \gamma(x) I, & x \in \Omega, \ t > 0, \vspace{1mm} \\
\displaystyle
    \p_tI- d_I \Delta I = \frac{\beta(x)SI}{S+I} - \gamma(x) I, & x \in \Omega, \ t > 0, \vspace{1mm} \\
\displaystyle
 \partial_{\nu}S=\p_{\nu}I=0, & x \in \partial\Omega, \ t > 0, \vspace{1mm} \\
S(0,x) = S_0(x) \ge 0, \ I(0,x) = I_0(x) \ge 0, & x \in \Omega.
\end{cases}
\end{equation}
In this model, the domain $\Omega \subset \mathbb{R}^n$ ($n \geq 1$) is bounded with smooth boundary $\partial\Omega$,
$\nu$ is the outward unit normal to $\p\Om$, $\beta,\,\gamma\in C(\bar{\Omega})$, and  $\beta,\,\gamma>0$ on $\bar{\Omega}$.  
The variables $S(t,x)$ and $I(t,x)$ denote the densities of susceptible and infected individuals at  time $t$ and location $x$, respectively. The positive constants $d_S$ and $d_I$ represent the mobility rates of susceptible and infected individuals. The functions $\beta(x)$ and $\gamma(x)$ are the rates of disease transmission and recovery. The homogeneous Neumann boundary conditions ensure no population flux across the boundary. Inspired by the foundational work of Allen et al. \cite{ABL}, an extensive body of research has emerged focusing on reaction-diffusion epidemic models; see \cite{BDLZ,CCui,CL,CLL,CS, CLPZ,CS2,DW,DKLS2025,KMP,GKLZ,PY, LLT, LZZ, LPW, LPW1, LPX, LLou, LS, LS2023,LSS2023, LX2024,MWW1, MWW2, PZW, PWZZ,PWu,PSW1,PSW2,PZ,TWin1,TWin, TL, Sa,SLX,WWK, WW,WJL, WZ, WWL} among others. These studies delve into the complex dynamics of disease transmission, examining how diseases spread under a variety of conditions and influences. 

Diffusive SIS epidemic models with a modified frequency-dependent infection mechanism, represented as \({SI}/{(m+S+I)}\), have garnered considerable research interest recently \cite{AM,CCui1,CCui,GLPZ-2024}. This model incorporates a nonnegative saturated incidence function \(m\) to account for the limitations on the number of effective contacts an individual can make. These limitations are influenced by various factors, including population density, spatial distribution, and time constraints. Furthermore, \(m\) may represent a subpopulation with natural resistance to specific infections, potentially due to genetic mutations or other biological factors. This approach has been particularly influential in studies by Diekmann and Kretzschmar \cite{DK} and Roberts \cite{R}. For example, a notable COVID-19 human challenge trial \cite{Killingley} demonstrated that 16 out of 36 participants inoculated with SARS-CoV-2 did not become infected. Such natural resistance has also been observed in other infectious diseases, such as HIV/AIDS, malaria, and norovirus, underscoring the importance of incorporating these factors into epidemic modeling. Additional studies, such as those by Anderson and May \cite{AM2} and Heesterbeek \cite{Hee2000}, further emphasize the significance of considering variable susceptibility and resistance in understanding epidemic dynamics.

Specifically, the authors in \cite{GLPZ-2024} explored the following SIS model featuring a saturated incidence function in a spatially heterogeneous environment:
\begin{equation}\label{SIS2}
\begin{cases}
\displaystyle
    \p_tS- d_S \Delta S = -\frac{\beta(x)SI}{m(x)+S+I} + \gamma(x) I, & x \in \Omega, \ t > 0, \vspace{1mm} \\
\displaystyle
    \p_tI- d_I \Delta I = \frac{\beta(x)SI}{m(x)+S+I} - \gamma(x) I, & x \in \Omega, \ t > 0, \vspace{1mm} \\
\displaystyle
 \partial_{\nu}S=\p_{\nu}I=0, & x \in \partial\Omega, \ t > 0, \vspace{1mm} \\
S(0,x) = S_0(x) \ge 0, \ I(0,x) = I_0(x) \ge 0, & x \in \Omega.
\end{cases}
\end{equation}
Here, the  saturated incidence function $m\in C(\bar\Omega)$ satisfies  \(m \geq 0\) and \(m \not\equiv 0\), and
all other parameters and state variables have the same epidemiological meanings as in \eqref{SIS1} (the authors also considered the scenario where parameters $\beta$, $\gamma$, and $m$ are time-periodic). 

Taking into account the realistic implications, it is assumed that the initial data ${S}_0$ and $I_0$ are both nonnegative continuous functions on $\bar{\Omega}$ with $I_0\geq,\not\equiv0$. Thus, by the standard theory for parabolic equations, \eqref{SIS2} admits a unique classical solution $S, I\in C^{1,2}((0,\infty)\times\bar{\Omega}$). Moreover, the strong maximum principle and Hopf boundary Lemma for parabolic equations ensure that $S(t,x)>0$ and $I(t,x)>0$ for all $x\in \bar{\Omega}$ and $t\in(0, \infty)$.

Adding the first two equations in \eqref{SIS2} and integrating the resulting equation over \(\Omega\), one can derive that:
\begin{equation}\label{sum}
\int_{\Omega}(S(t,x) + I(t,x)) \, \textrm{d}x \equiv \int_{\Omega}(S_0(x) + I_0(x)) \, \textrm{d}x =: N, \quad \forall t \geq 0,
\end{equation}
indicating that the total population  $N$ is conserved. Throughout this paper, \(N\) is supposed to be a fixed positive constant.

In \cite{GLPZ-2024}, the authors analyzed the threshold dynamics of the model \eqref{SIS2}, establishing conditions for disease extinction and persistence based on the basic reproduction number. They demonstrated the global attractivity of both the disease-free equilibrium and the endemic equilibrium by constructing suitable Lyapunov functions under certain conditions. Additionally, they determined the spatial distribution of the disease when the dispersal rates $d_S$ or/and $d_I$ is sufficiently small.
Compared to previous works on model \eqref{SIS1}, their results suggest that the presence of a saturation effect reduces the transmission risk of the disease. This effect enables the total population size to significantly influence disease dynamics, potentially leading to substantial alterations in the spatial distribution of the disease under specific conditions.


Quarantine and lockdown measures significantly limit population movement, reducing individual interactions and transmission opportunities. In the diffusive SIS models, this is reflected by reducing the movement rates  $d_S$ of the susceptible population and 
 $d_I$ of the infected population. Formally, when the movement rate \(d_S\)  is controlled to approach zero, system \eqref{SIS2}-\eqref{sum} reduces to the following parabolic-ODE coupled system:
\begin{equation}\label{ds=0-model}
\begin{cases}
\displaystyle
    \p_t S = \gamma(x) I - \frac{\beta(x) S I}{m(x) + S + I}, & x \in \Omega,\, t > 0, \\
\displaystyle
    \p_t I = d_I \Delta I - \gamma(x) I + \frac{\beta(x) S I}{m(x) + S + I}, & x \in \Omega,\, t > 0, \\
\displaystyle
    \p_{\nu} I = 0, & x \in \partial \Omega,\, t > 0, \\
\displaystyle
    S( 0,x) = S_0(x) \ge 0, \ \ I( 0,x) =I_0(x)\geq0, & x \in \Omega, \\
\displaystyle
    \int_{\Omega} (S(t,x) + I(t,x))dx = N, & t \ge 0.
\end{cases}
\end{equation}
Similarly, as the movement rate \(d_I\) of the infected individuals approaches zero, system \eqref{SIS2}-\eqref{sum} reduces to the following parabolic-ODE system:
\begin{equation}\label{dI=0-model}
\begin{cases}
\displaystyle
    \p_t S = d_S \Delta S + \gamma(x) I - \frac{\beta(x) S I}{m(x) + S + I}, & x \in \Omega,\, t > 0, \\
\displaystyle
    \p_t I = -\gamma(x) I + \frac{\beta(x) S I}{m(x) + S + I}, & x \in \Omega,\, t > 0, \\
\displaystyle
    \p_{\nu} S = 0, & x \in \partial \Omega,\, t > 0, \\
\displaystyle
    S( 0,x)= S_0(x) \ge 0, \ \ I( 0,x) =I_0(x)\geq0, & x \in \Omega, \\
\displaystyle
    \int_{\Omega} (S(t,x) + I(t,x)) dx = N, & t \ge 0.
\end{cases}
\end{equation}
 
In \cite{LS, salako2024degenerate}, the authors investigated the long-term dynamics of both models \eqref{ds=0-model} and \eqref{dI=0-model} when \(m \equiv 0\). The primary aim of this paper is to address \eqref{ds=0-model} and \eqref{dI=0-model} when \(m \not\equiv 0\) and to understand how the saturated incidence function \(m\) affects disease persistence, extinction, and spatial distribution. It turns out that many techniques in \cite{salako2024degenerate} cannot be used to investigate \eqref{ds=0-model} and \eqref{dI=0-model}. Therefore, novel analytical techniques are needed to overcome the inherent mathematical challenges.

Our results demonstrate that the saturated incidence function \(m\), combined with the transmission risk level—determined by the difference between disease transmission and recovery rates—and the total population size, plays a crucial role in the global dynamics of \eqref{ds=0-model} and \eqref{dI=0-model}. In future work, we plan to rigorously verify that the solution to \eqref{SIS2} converges to that of \eqref{ds=0-model} as \(d_S \to 0\) and to \eqref{dI=0-model} as \(d_I \to 0\), respectively. We will provide a more detailed discussion of our theoretical results and their implications in the discussion section, highlighting the critical insights gained from our analysis. We would like to mention that the works \cite{Burie1, Burie2} employ another approach to investigate the impact of population movement constraints on disease spread; they studied the concentration phenomenon in an epidemic model by establishing an asymptotic shape of the stationary solution for a small diffusion parameter (but not completely zero).

The rest of the paper is organized as follows. In Section 2, we present the main theoretical results for both systems \eqref{ds=0-model} and \eqref{dI=0-model}. Section 3 is dedicated to the proofs of the main results for system \eqref{ds=0-model}, and the proofs of the results  for system \eqref{dI=0-model} are in Section 4. In Section 5, we conduct numerical simulations to support and enhance our theoretical findings. Finally, in Section 6, we conclude the paper with the discussion on the implications of our results for disease control and eradication strategies.

\section{Statement of  main results}
In this section, we present the main results  for systems \eqref{ds=0-model} and \eqref{dI=0-model}. Throughout the paper, we denote \( C^+(\bar{\Omega})= \{u\in C(\bar{\Omega}):\ u\geq0\}\) and $W^{2,p}_\nu(\Omega):=\{u\in W^{2,p}(\Omega): \ \partial_{\nu}u=0\ \text{on}\ \partial\Omega\}$ for $p\ge 1$. Given a set $A\subseteq\mathbb{R}^n$, $|A|$ represents the Lebesgue measure of $A$. We also use $\|u\|_{\infty}$ to represent the usual $L^\infty$ norm of $u\in L^\infty(\Omega)$.

Define
$$M^0 := \{ x \in \bar{\Omega} :\  m(x) = 0 \}\ \ \text{and}\ \
M^+ := \{ x \in \bar{\Omega} :\  m(x) > 0 \}.
$$
Thus, \(M^0\) and \(M^+\) represent, respectively, the regions of absence and presence of the saturated incidence effect.

The region $\bar\Omega$ is divided into three sub-regions according to the risk levels:

\noindent\( H^+ := \{x \in \bar\Omega :\ \beta(x) > \gamma(x)\} \) is the {\it high-risk} region;\\
\( H^0 := \{x \in \bar\Omega :\ \beta(x) = \gamma(x)\} \) is the {\it moderate-risk} region;\\
\( H^- := \{x \in \bar\Omega :\ \beta(x) < \gamma(x)\} \) is the {\it low-risk} region.

\subsection{Results for system \eqref{ds=0-model} }
The following result is concerned about the global existence of the solution to the system \eqref{ds=0-model}. 
\begin{prop}\label{theorem_ds1}
 For any initial data $(S_0,I_0)\in [C^+(\bar{\Omega})]^2$,   system \eqref{ds=0-model} admits a  unique  solution  $(S(t,x),I(t,x))$ defined for all $t\ge 0$ and $x\in\bar{\Omega}$ with 
$$
S,\ I\in C([0,\infty), C^+(\bar{\Omega}))\cap C^{1}((0,\infty), C(\bar{\Omega})) \quad \text{and}\quad   I\in   C((0,\infty), \cap_{p\ge 1}W_\nu^{2,p}({\Omega})).
$$  
If $I_0\not\equiv 0$, then $S(t,x)>0$ and $I(t,x)>0$ for all   $x\in\bar{\Omega}$ for all $t>0$.
\end{prop}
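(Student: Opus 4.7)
The plan is to establish local existence by a Banach fixed-point argument that respects the hybrid ODE-parabolic structure, upgrade to global existence via a priori $L^\infty$ bounds, and finally invoke maximum principles for the strict positivity statement.

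For local existence on a short interval $[0,T]$, I would first truncate the nonlinearity by setting $F(x,s,i) := \beta(x)\,s_+ i_+/(m(x)+s_+ + i_+)$, extended by $0$ when $m(x)=s_+=i_+=0$; this $F$ is bounded and globally Lipschitz in $(s,i)\in \R^2$. Working in the Banach space $X_T := [C([0,T], C(\bar{\Omega}))]^2$, I define a map $\Phi(\tilde{S}, \tilde{I}) := (S, I)$, where $S(t,x) := S_0(x) + \int_0^t (\gamma(x)\tilde{I} - F(x,\tilde{S},\tilde{I}))\,d\tau$ is obtained pointwise in $x$ by integrating the ODE, and $I$ is given by Duhamel's formula applied to the Neumann-Laplacian analytic semigroup $\{e^{t d_I \Delta_\nu}\}_{t\geq 0}$ with source $-\gamma\tilde{I} + F(x,\tilde{S},\tilde{I})$. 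The Lipschitz bound on $F$ together with the sup-norm contractivity of the Neumann heat semigroup yields a contraction of $\Phi$ on a suitable closed ball in $X_T$ for $T$ small, producing a unique mild solution.

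To see that the truncation is vacuous, I verify nonnegativity: the $I$-equation, interpreted as $\p_t I - d_I \Delta I = I\,(\beta S/(m+S+I) - \gamma)$ with bounded coefficient, gives $I \geq 0$ by the parabolic maximum principle, and the $S$-equation, rewritten as $\p_t S + a\,S = \gamma I$ with $a := \beta I/(m+S+I) \geq 0$ bounded, yields by Duhamel's formula
$$
S(t,x) = S_0(x)\,e^{-\int_0^t a(\tau,x)\,d\tau} + \int_0^t \gamma(x)\,I(\tau,x)\,e^{-\int_\tau^t a(\sigma,x)\,d\sigma}\,d\tau \geq 0.
$$
Global existence then follows from a priori bounds: the pointwise inequality $SI/(m+S+I) \leq I$ gives $\p_t I \leq d_I \Delta I + (\beta - \gamma) I$, whence $\|I(t)\|_\infty \leq e^{Kt}\|I_0\|_\infty$ with $K := \|(\beta - \gamma)_+\|_\infty$ by the parabolic maximum principle, and $SI/(m+S+I) \leq S$ used in the $S$-equation gives $\p_t S \leq \gamma I$, whence $\|S(t)\|_\infty \leq \|S_0\|_\infty + \|\gamma\|_\infty \int_0^t \|I(\tau)\|_\infty\,d\tau$; both bounds are finite on any finite interval, so the usual continuation principle extends the solution to $[0,\infty)$.

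For strict positivity, when $I_0 \not\equiv 0$, the strong maximum principle and Hopf boundary lemma applied to the parabolic inequality satisfied by $I$ give $I(t,x) > 0$ on $(0,\infty) \times \bar{\Omega}$; substituting into the Duhamel representation for $S$ forces $S(t,x) \geq \int_0^t \gamma(x) I(\tau,x) e^{-\int_\tau^t a(\sigma,x)\,d\sigma}\,d\tau > 0$, even when $S_0(x) = 0$. The claimed regularity $I \in C((0,\infty), \cap_{p\geq 1} W_\nu^{2,p}(\Omega))$ and $S, I \in C^1((0,\infty), C(\bar{\Omega}))$ then follows from the smoothing property of the analytic Neumann heat semigroup acting on the continuous source, together with direct differentiation of the $S$-ODE. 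I expect the main technical subtlety to be the non-smoothness of $F$ at $(S,I) = (0,0)$ on the set $M^0 := \{m = 0\}$; this is already handled by working with the globally Lipschitz extension above, though one could alternatively pass through an $\ep$-regularization of $m$ followed by a compactness limit.
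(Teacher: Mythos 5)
Your proposal is correct and follows essentially the same route as the paper: the authors likewise obtain local existence by the standard semigroup/fixed-point method (citing \cite[Proposition 3.1]{salako2024degenerate} rather than writing it out), derive global existence from the very same inequality $SI/(m+S+I)\le I$ via comparison with a linear system, and get positivity from the strong maximum principle together with the integrating-factor formula for $S$. The only cosmetic slip is the claim that the truncated nonlinearity $F$ is bounded --- it is not (it grows linearly, since $F\le \beta\min(s_+,i_+)$), but global Lipschitz continuity is all your contraction argument actually uses, so nothing breaks.
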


Note that if $I_0\equiv0$ on $\bar\Omega$, then $I(t,x)\equiv0$ on $\bar\Omega$ for all $t\geq0$. Therefore, in the rest of this subsection, we will always assume $(S_0,I_0)\in [C^+(\bar{\Omega})]^2$ with $I_0\not\equiv 0$.  We first consider  the case when the entire habitat \(\bar{\Omega}\) is a high-risk region, i.e. \(\beta > \gamma\) on \(\bar{\Omega}\). 
\begin{tm}\label{th2.1} Fix $d_I>0$ and suppose  $H^+=\bar\Omega$. Let  $(S,I)$ be the solution of \eqref{ds=0-model}. Then exactly one of the following two statements holds:
\begin{itemize}
    \item[\rm (i)] $\|I(t,\cdot)\|_{\infty}\to 0$ and $\|S(t,\cdot)-S^*\|_{\infty}\to 0$ as $t\to\infty$ for some $S^*\in C^+(\bar{\Omega})$ satisfying $\int_{\Omega}S^*=N$ and $S^*>0$ on $M^+$; 

    \item[\rm (ii)]  $\|S(t,\cdot)-\tilde{S}\|_{\infty}\to 0$ and $\|I(t,\cdot)-\tilde{I}\|_{\infty}$ as $t\to\infty$, where 
    \begin{equation}\label{th2.1-eq1}
        \tilde{S}:=\frac{\gamma}{\beta-\gamma}\left(\frac{N-\int_{\Omega}\frac{m\gamma}{\beta-\gamma}}{\int_{\Omega}\frac{\beta}{\beta-\gamma}}+m\right)\quad \text{and}\quad 
        \tilde{I}:=\frac{N-\int_{\Omega}\frac{m\gamma}{\beta-\gamma}}{\int_{\Omega}\frac{\beta}{\beta-\gamma}}.
    \end{equation}
\end{itemize}
Moreover, if $N\le\int_{\Omega}\frac{m\gamma}{\beta-\gamma}$, then {\rm (i)} holds.
\end{tm}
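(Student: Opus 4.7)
The plan is to combine uniform-in-time a priori bounds, a complete classification of the non-negative equilibria, and a LaSalle-type invariance argument. First, I would strengthen Proposition~\ref{theorem_ds1} to $L^\infty$ bounds uniform in time: the conservation $\int_\Omega(S+I)\,dx=N$ and a Moser/Alikakos iteration applied to
\[
\partial_t I-d_I\Delta I=I\bigl(\tfrac{\beta S}{m+S+I}-\gamma\bigr)
\]
yield $\sup_{t\ge 1}\|I(t,\cdot)\|_\infty<\infty$, while rewriting the $S$-equation as $\partial_t S=\tfrac{(\beta-\gamma)I}{m+S+I}\,(S_\ast(x,I)-S)$ with $S_\ast(x,I):=\gamma(m+I)/(\beta-\gamma)$ confines $S$ pointwise between $0$ and $\max\{S_0(x),\sup_{\tau\le t}S_\ast(x,I(\tau,x))\}$. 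Parabolic regularity for $I$ then makes $\{(S(t,\cdot),I(t,\cdot))\}_{t\ge 1}$ precompact in $C(\bar\Omega)^2$, so $\omega(S_0,I_0)$ is nonempty, compact, and connected. Next, I would classify the equilibria: if $I^\ast\not\equiv 0$, the strong maximum principle gives $I^\ast>0$ on $\bar\Omega$; the stationary $S$-equation then forces $S^\ast=\gamma(m+I^\ast)/(\beta-\gamma)$ pointwise; substituting back into the $I$-equation leaves $\Delta I^\ast=0$, so $I^\ast$ is a constant pinned by the conservation constraint to the value $\tilde I$ of \eqref{th2.1-eq1}, which is admissible only when $\tilde I>0$, i.e.\ $N>\int_\Omega\gamma m/(\beta-\gamma)$. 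All other equilibria are disease-free states $(S^\ast,0)$ with $S^\ast\ge 0$ and $\int_\Omega S^\ast=N$.

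To show every $\omega$-limit point is an equilibrium, in the admissible-endemic regime I would attempt the Volterra-type functional
\[
V(t)=\int_\Omega\Bigl[(S-\tilde S-\tilde S\ln(S/\tilde S))+(I-\tilde I-\tilde I\ln(I/\tilde I))\Bigr]\,dx.
\]
Using $\gamma(m+\tilde S+\tilde I)=\beta\tilde S$ and integration by parts,
\[
\dot V=-d_I\tilde I\int_\Omega\frac{|\nabla I|^2}{I^2}\,dx-\int_\Omega\frac{\beta\,\bigl[A^2+mA(\tilde S-S)\bigr]}{S(m+\tilde S+\tilde I)(m+S+I)}\,dx,\qquad A:=\tilde S I-S\tilde I.
\]
The first term is nonpositive, but the cross term $mA(\tilde S-S)$ has no fixed sign when $m\not\equiv 0$, so the standard Volterra function that is decisive for the $m\equiv 0$ case of \cite{LS,salako2024degenerate} does not descend here. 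I expect this to be the main obstacle. My plan to repair it is (a) to replace $\tilde S$ inside the logarithm by the slaving profile $S_\ast(x,I)$ in order to exploit the $S$-ODE structure, (b) to introduce an $x$-dependent weight absorbing the cross term via the endemic identity $\gamma(m+\tilde I)=(\beta-\gamma)\tilde S$, or (c) to pair $V$ with an auxiliary dissipation estimate for $\int_\Omega|\nabla I|^2$. In the complementary regime $N\le\int_\Omega\gamma m/(\beta-\gamma)$, where the endemic equilibrium is inadmissible and the functional $V$ is not even well defined, I would instead work with a weighted $L^1$-functional $\int_\Omega\phi I\,dx$, with $\phi$ the positive principal eigenfunction of the linearization of the $I$-equation around a DFE profile, to deduce $\int_\Omega I\to 0$ directly.

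Once a suitable $\dot V\le 0$ estimate is in hand, LaSalle's invariance principle combined with the equilibrium list and the connectedness of $\omega(S_0,I_0)$ forces the $\omega$-limit either to lie entirely in the DFE manifold (case (i)) or to equal $\{(\tilde S,\tilde I)\}$ (case (ii)). To upgrade accumulation on the DFE manifold to convergence to a single $S^\ast$ with $S^\ast>0$ on $M^+$, I would integrate the identity $\partial_t\ln S=I[\gamma/S-\beta/(m+S+I)]$: once $\|I(t,\cdot)\|_\infty\to 0$ rapidly enough to give $\int_0^\infty\|I(s,\cdot)\|_\infty\,ds<\infty$, this ODE produces a pointwise limit $S^\ast(x)$. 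On $M^+$, the pointwise bound $\partial_t S\ge -(\beta/m)I\,S$ yields $S(t,x)\ge S(T,x)\exp\!\bigl(-(\beta(x)/m(x))\int_T^tI\bigr)$, and since Proposition~\ref{theorem_ds1} ensures $S(T,x)>0$ for every $T>0$, the tail integral can be made arbitrarily small so that $S^\ast(x)\ge S(T,x)/2>0$. The "moreover" clause is then automatic: if $N\le\int_\Omega\gamma m/(\beta-\gamma)$, then $\tilde I\le 0$, the endemic equilibrium is inadmissible, and only case (i) can occur.
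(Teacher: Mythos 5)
There is a genuine gap at the heart of your argument: the Lyapunov functional. You correctly diagnose that the Volterra-type functional that settles the case $m\equiv 0$ breaks down here because the cross term $mA(\tilde S-S)$ has no sign, but your three proposed repairs are left entirely speculative, and none of them is what actually works. The paper's resolution is the change of variable $U=\frac{\beta-\gamma}{\gamma}S-m$ (which absorbs the saturation function into the state) together with the \emph{weighted quadratic} functional $\int_\Omega\frac{\gamma U^2}{2(\beta-\gamma)}+\frac12\int_\Omega I^2$, whose derivative is exactly $-d_I\int_\Omega|\nabla I|^2-\int_\Omega\frac{\gamma(U-I)^2I}{m+S+I}$ with no indefinite remainder. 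Without this (or an equivalent) identity, your LaSalle step never gets off the ground, so the dichotomy between the DFE manifold and $(\tilde S,\tilde I)$ is not established. A related structural problem: your invariance argument needs $\{S(t,\cdot)\}_{t\ge1}$ precompact in $C(\bar\Omega)$, but $S$ solves a pointwise ODE with no smoothing, so equicontinuity in $x$ uniformly in $t$ is not available; the paper avoids this by applying the $\omega$-limit argument only to $I$ (which does enjoy parabolic regularity), showing $I$ becomes asymptotically spatially constant via Poincar\'e, and then treating $S$ by direct ODE comparison. Note also that even after obtaining the dissipation integral bound, the paper must prove H\"older continuity in $t$ of the dissipation functional (Lemma 3.4) before it can conclude the integrand tends to zero; this step is absent from your outline.

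A second concrete gap is in your treatment of case (i). You deduce convergence of $S$ from $\int_0^\infty\|I(s,\cdot)\|_\infty\,ds<\infty$, but this integrability is not guaranteed when $\|I(t,\cdot)\|_\infty\to 0$; it is precisely the subcase $\int_0^\infty\|I(t,\cdot)\|_\infty\,dt=\infty$ that requires a separate argument. In that regime the paper uses the Harnack inequality to show $\int_{t_\varepsilon}^t\frac{I}{m+S+I}\,ds\to\infty$ uniformly in $x$ and then squeezes $S(t,x)\to\frac{\gamma(x)m(x)}{\beta(x)-\gamma(x)}$ by two-sided ODE comparison; your proposal gives no limit for $S$ in this subcase. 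Your equilibrium classification, your lower bound for $S$ on $M^+$, and your derivation of the ``moreover'' clause from $\tilde I\le 0$ are all sound, but they sit downstream of the two unresolved steps above.
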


Our next result identifies sufficient conditions under which  alternative {\rm (ii)} of Theorem \ref{th2.1} holds.
\begin{prop}\label{prop1} Fix $d_I>0$. Suppose that $H^+=\bar\Omega$ and $N>\int_{\Omega}\frac{m\gamma}{\beta-\gamma}$.
If either 
\begin{itemize}
\item[\rm(i)] $S_0\ge \frac{m\gamma}{\beta-\gamma}$, or 
\item[\rm (ii)] $S_0\le \frac{m\gamma}{\beta-\gamma}$ and $\|I_0\|_{\infty}<\frac{N-\int_{\Omega}\frac{m\gamma}{\beta-\gamma}}{\int_{\Omega}\frac{\gamma}{\beta-\gamma}}$,
\end{itemize}
then  alternative {\rm (ii)} of Theorem \ref{th2.1} holds.
\end{prop}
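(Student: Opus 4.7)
The plan would be to invoke the dichotomy of Theorem~\ref{th2.1} and rule out its alternative~(i) ($\|I(t,\cdot)\|_\infty\to0$) under either hypothesis, leaving alternative~(ii) as the only possibility. In both cases the steps are: (a) establish a pointwise invariance that constrains the candidate disease-free limit $S^*$ from one side; (b) combine it with the global mass constraint $\int_\Omega S^*=N$ to show that the limiting potential $V(x):=-\gamma(x)+\frac{\beta(x)S^*(x)}{m(x)+S^*(x)}$ is non-negative and not identically zero, so its Neumann principal eigenvalue $\lambda_1$ is strictly positive (take $\phi\equiv1$ in the variational characterization); (c) test the $I$-equation against the associated positive eigenfunction $\phi_1$ to force exponential growth of $\int_\Omega\phi_1 I\,dx$, contradicting $I\to0$.

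For hypothesis~(i), I would show $S(t,x)\geq\frac{m(x)\gamma(x)}{\beta(x)-\gamma(x)}$ for all $t\geq0$, $x\in\bar\Omega$. Viewing the $S$-equation as a pointwise ODE, at the barrier one computes $\partial_t S=\frac{\gamma I^2}{m+S+I}\geq0$, so ODE comparison preserves $S_0\geq\frac{m\gamma}{\beta-\gamma}$. Hence $S^*\geq\frac{m\gamma}{\beta-\gamma}$ in the limit, and since $N>\int_\Omega\frac{m\gamma}{\beta-\gamma}$, $S^*>\frac{m\gamma}{\beta-\gamma}$ on a set of positive measure, giving $V\geq0$ with $V\not\equiv0$. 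Integration by parts against $\phi_1$ then gives
\[
\tfrac{d}{dt}\int_\Omega\phi_1 I\,dx=\lambda_1\int_\Omega\phi_1 I\,dx+\int_\Omega\phi_1 I\Bigl[\tfrac{\beta S}{m+S+I}-\tfrac{\beta S^*}{m+S^*}\Bigr]dx,
\]
and uniform convergence $(S,I)\to(S^*,0)$ makes the last term $o(1)\cdot\int_\Omega\phi_1 I$, so Gronwall yields the contradiction.

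For hypothesis~(ii), set $K:=\frac{N-\int_\Omega m\gamma/(\beta-\gamma)}{\int_\Omega\gamma/(\beta-\gamma)}$ and $\psi(x):=\frac{\gamma(x)(m(x)+K)}{\beta(x)-\gamma(x)}$. One checks that $(\psi,K)$ is a heterogeneous stationary solution of the PDE-ODE (the identity $\gamma(m+\psi+K)=\beta\psi$ handles both equations) with $\int_\Omega\psi=N$, while the hypotheses force the strict initial ordering $f_0:=S_0-\psi\leq-\frac{\gamma K}{\beta-\gamma}<0$ and $J_0:=I_0-K<0$. A direct computation yields
\begin{align*}
\partial_t f+\tfrac{(\beta-\gamma)I}{m+S+I}f&=\tfrac{\gamma I}{m+S+I}J,\\
\partial_t J-d_I\Delta J+\tfrac{(J+K)\beta\psi}{(m+S+I)(m+\psi+K)}J&=\tfrac{(J+K)\beta(m+K)}{(m+S+I)(m+\psi+K)}f.
\end{align*}
On any time interval where $J\leq0$ the $f$-ODE has non-positive forcing and the integrating-factor formula keeps $f<0$; on any interval where $f\leq0$ the $J$-equation has non-positive forcing and the strong parabolic maximum principle (with Neumann BC) keeps $J<0$. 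A bootstrap on $T:=\sup\{t\geq0:f\leq0\text{ and }J\leq0\text{ on }[0,t]\times\bar\Omega\}$ then gives $T=\infty$. With $S\leq\psi$ and $\int_\Omega S^*=N=\int_\Omega\psi$, any DFE limit would satisfy $S^*\equiv\psi$, so $V$ reduces to the strictly positive $\frac{\gamma(\beta-\gamma)K}{\beta m+\gamma K}$, $\lambda_1>0$, and the same eigenfunction argument as in case~(i) again yields a contradiction.

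The main analytical difficulty is the coupled invariance in case~(ii): although each of $f\leq0$ and $J\leq0$ alone is preserved by a standard comparison argument, ruling out the first violation time requires combining the ODE representation for $f$ with the strong parabolic maximum principle for $J$ in a bootstrap, which needs careful treatment at the Neumann boundary via the Hopf lemma.
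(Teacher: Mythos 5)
Your case (i) has a genuine gap. The preserved barrier $S\ge \frac{m\gamma}{\beta-\gamma}$ degenerates to $S\ge 0$ on the set $M^0=\{m=0\}$, so under the contradiction hypothesis you cannot exclude that the candidate limit $S^*$ vanishes on part of $M^0$ (the paper's Proposition \ref{prop2} shows this really happens in other regimes). On $\{m=0\}\cap\{S^*=0\}$ your limiting potential $V=-\gamma+\frac{\beta S^*}{m+S^*}$ is a $0/0$ expression, and, more fatally, the perturbation term $\frac{\beta S}{m+S+I}-\frac{\beta S^*}{m+S^*}$ is \emph{not} $o(1)$ there: both $S$ and $I$ tend to $0$, the ratio $\frac{\beta S}{S+I}$ can sit anywhere in $(0,\beta)$ (e.g.\ be near $0$ if $I\gg S$ locally), so the error can be of size $\|\beta\|_\infty$ and Gronwall gives no growth of $\int\phi_1 I$. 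Nothing in your invariance controls $S/I$ from below on $M^0$. The paper sidesteps this entirely: setting $U=\frac{\beta-\gamma}{\gamma}S-m$, the pair $(U,I)$ solves a cooperative system whose constant pairs $(c,c)$ are equilibria; since hypothesis (i) gives $U(0,\cdot)\ge 0$ and hence $U(t,\cdot)>0$ for $t>0$, the comparison principle for cooperative systems propagates $c(t_0)=\min_x\min\{U(t_0,x),I(t_0,x)\}>0$ forward as a lower bound for $I$, yielding $\liminf_{t\to\infty}\min_x I>0$ directly — no limit identification, no linearization. You need this (or an equivalent) ingredient; the eigenvalue route as written does not close.

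Your case (ii) is essentially correct but longer than necessary. Your pair $(\psi,K)$ is exactly the constant equilibrium $(U,I)=(K,K)$ of the paper's cooperative system in disguised variables, your $f$- and $J$-equations are correct, and the coupled sign-preservation is just the cooperative comparison principle (the paper applies it to $(\|I_0\|_\infty-U,\|I_0\|_\infty-I)$; a clean citation would spare you the bootstrap you flag as delicate). After obtaining $S\le\psi$, however, you do not need the eigenfunction argument at all: $S<\frac{\gamma}{\beta-\gamma}(\|I_0\|_\infty+m)$ gives $\int_\Omega I=N-\int_\Omega S\ge \bigl(K-\|I_0\|_\infty\bigr)\int_\Omega\frac{\gamma}{\beta-\gamma}>0$ for all $t$, and combining with the asymptotic spatial homogeneity of $I$ (equation \eqref{Eqa-2}, already established in the proof of Theorem \ref{th2.1}) yields $\liminf_t\min_x I>0$ immediately. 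Your eigenvalue argument does work here (since $m+S^*=m+\psi$ is bounded below by $\frac{\gamma_{\min}K}{\|\beta-\gamma\|_\infty}>0$, the uniform convergence of the incidence term holds), but it is extra machinery.
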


\begin{rk}\label{re1}
In light of Proposition \ref{prop1}, we conjecture that  alternative {\rm (ii)} of Theorem \ref{th2.1} holds if $N>\int_{\Omega}\frac{m\gamma}{\beta-\gamma}$. We note that this is the case if $m\equiv0$ on $\bar\Omega$ (\cite[Theorem 4.2(ii)]{salako2024degenerate}).
\end{rk}

If the entire habitat is a high-risk region,  Theorem \ref{th2.1} states that whether the strategy of limiting the movement of susceptible people can eliminate the disease depends on the total population size $N$: if $N$ is small, the disease can be controlled; while if $N$ is large, then it may not be controlled. This is very different from the case $m\equiv 0$.



We next deal with the case when the habitat \(\bar{\Omega}\) consists only of both high-risk and moderate-risk locations, that is, $H^+\cup H^0=\bar\Omega$ and  $H^0\neq\emptyset$. 

\begin{tm}\label{th2.2}
Fix $d_I>0$. Suppose that $\bar\Omega=H^+\cup H^0$ and $H^0\neq\emptyset$. Let $(S,I)$ be the solution of \eqref{ds=0-model}. The following statements hold:
\begin{enumerate}    
    \item[\rm (i)] If $H^0\cap M^+$ has nonempty interior, then 
 $$
 \mbox{ $\lim_{t\to\infty}\|S( t,\cdot)-S^*\|_\infty=0$\
and\ \, $\lim_{t\to\infty}\|I( t,\cdot)\|_{L^\infty(\Omega)}=0$,}
$$
where $S^*\in C^+(\bar\Omega)$ and $S^*>0$ on $M^+\cup H^0$;
\item[\rm (ii)]   If $H^0$ has zero measure
and $\int_\Omega 1/(\beta-\gamma)dx=\infty$, then there exists $\{t_k\}$ converging to infinity such that  
 $I( t_k,\cdot)\to 0$ in $C(\bar\Omega)$ and $\int_{\Omega}S(t_k,\cdot)dx\to N$ as $k\to\infty$.
\end{enumerate}
\end{tm}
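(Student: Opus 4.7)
The plan is to treat parts (i) and (ii) separately, after a common preliminary step on uniform bounds. First, the conservation law $\int_\Omega(S+I)\equiv N$ together with standard parabolic Moser iteration on the $I$-equation yields $\|I(t,\cdot)\|_\infty\le M$ for $t\ge 1$. Rewriting the $S$-equation as $\partial_tS=I[\gamma(m+I)-(\beta-\gamma)S]/(m+S+I)$, one sees $\partial_tS<0$ on $H^+$ whenever $S>\gamma(m+I)/(\beta-\gamma)$ and $\partial_tS\ge 0$ on $H^0$, which yields $\|S(t,\cdot)\|_\infty\le M$ for $t\ge t_0$; parabolic Schauder then gives compactness of the $I$-trajectory.

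For part (i), fix an open set $\Omega_0$ with $\overline{\Omega_0}\subset\mathrm{int}(H^0\cap M^+)$, which exists by hypothesis. The key monotone quantity is $V(t):=\int_{H^0}S(t,x)\,dx$. Since on $H^0$ the reaction term for $S$ equals $\gamma I(m+I)/(m+S+I)\ge 0$, $V$ is non-decreasing and bounded above, so it converges. Moreover $V'(t)\ge c_0\int_{\Omega_0}I\,dx$ for some $c_0>0$ (using $\min_{\overline{\Omega_0}}\gamma m>0$), giving
\begin{equation*}
\int_0^\infty\int_{\Omega_0}I(t,x)\,dx\,dt<\infty.
\end{equation*}
Invoking the Neumann parabolic Harnack inequality for $\partial_tI-d_I\Delta I=rI$ (with $\|r\|_\infty$ bounded) yields $\sup_{\bar\Omega}I(s)\le C\inf_{\bar\Omega}I(s+1)\le C|\Omega_0|^{-1}\int_{\Omega_0}I(s+1,\cdot)\,dx$, which integrates in $s$ to $\int_0^\infty\|I(t,\cdot)\|_\infty\,dt<\infty$; uniform H\"older regularity then forces $\|I(t,\cdot)\|_\infty\to 0$.

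Once $\|I(t,\cdot)\|_\infty\to 0$, the elementary bound $|\partial_tS|\le C\|I\|_\infty$ makes $\{S(t,\cdot)\}$ uniformly Cauchy in $C(\bar\Omega)$, so $S(t,\cdot)\to S^*\in C(\bar\Omega)$ uniformly with $\int_\Omega S^*=N$. For positivity of $S^*$ on $M^+$: the inequality $\partial_tS\ge-(\beta-\gamma)IS/m$ on $M^+$ combined with $\int_0^\infty I(t,x)\,dt<\infty$ yields $S^*(x)\ge S(t_0,x)\exp\bigl(-((\beta-\gamma)/m)\int_{t_0}^\infty I\,dt\bigr)>0$, using strict positivity of $S$ at any positive time $t_0$ by the argument in Proposition 2.1. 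On $H^0$, $\partial_tS\ge 0$ together with $I(t,x)>0$ for $t>0$ immediately gives $S^*>0$.

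For part (ii), argue by contradiction: assume $\liminf_{t\to\infty}\|I(t,\cdot)\|_\infty\ge\delta>0$. Neumann parabolic Harnack yields $I(t,x)\ge c\delta$ uniformly for $t\ge T$ and $x\in\bar\Omega$. For each $x\in H^+$ (full measure since $|H^0|=0$), $\partial_tS\ge 0$ whenever $S\le\gamma(m+I)/(\beta-\gamma)$, and this threshold is now bounded below by $c\delta\gamma(x)/(\beta(x)-\gamma(x))$. Hence $\{S\ge c\delta\gamma/(\beta-\gamma)\}$ is forward-invariant at $x$ and any sub-threshold trajectory grows at a uniform positive rate, so $\liminf_{t\to\infty}S(t,x)\ge c\delta\gamma(x)/(\beta(x)-\gamma(x))$ pointwise. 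Fatou's lemma then gives
\begin{equation*}
N\ge\liminf_{t\to\infty}\int_\Omega S(t,x)\,dx\ge c\delta\gamma_{\min}\int_\Omega\frac{dx}{\beta-\gamma}=\infty,
\end{equation*}
a contradiction. Hence $\liminf_{t\to\infty}\|I(t,\cdot)\|_\infty=0$, producing the required sequence $\{t_k\}$, and $\int_\Omega S(t_k)\to N$ follows from $\int_\Omega(S+I)\equiv N$. The hardest step is in (i): upgrading the localized monotone-Lyapunov bound on $\Omega_0$ to the global integrability $\int_0^\infty\|I(t,\cdot)\|_\infty\,dt<\infty$ via a quantitatively effective Neumann parabolic Harnack estimate.
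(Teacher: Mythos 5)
Your argument for part (i) has a genuine gap at the very first step, and the rest of that part depends on it. You claim that $\|S(t,\cdot)\|_\infty\le M$ for large $t$ because $\partial_t S<0$ on $H^+$ once $S$ exceeds the threshold $\gamma(m+I)/(\beta-\gamma)$, and $\partial_t S\ge0$ on $H^0$. But under the standing hypothesis $H^0\neq\emptyset$, the quantity $\gamma/(\beta-\gamma)$ blows up as $x\to H^0$ from inside $H^+$, so this threshold is \emph{not} uniformly bounded on $H^+$; and on $H^0$ itself, monotonicity of $t\mapsto S(t,x)$ gives no upper bound at all (it is the wrong direction). So the asserted uniform bound on $S$ does not follow. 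This bound is exactly what you need to justify $V'(t)\ge c_0\int_{\Omega_0}I$, since $\partial_t S=\gamma I(m+I)/(m+S+I)$ on $H^0$ requires $m+S+I$ to be controlled from above on $\Omega_0$ before you can extract a factor $c_0 I$. This is precisely the delicate point in the paper: there, the bound on $S$ is obtained by a comparison argument on a ball $B\subset H^0\cap M^+$, showing via the Harnack inequality for $I$ that $K^{-1}S(t,x_0)\le S(t,x)\le KS(t,x_0)$ for $x\in B$, and then invoking the mass constraint $\int_B S\le N$ to conclude $S(t,x_0)\le KN/|B|$ and hence $\|S(t,\cdot)\|_\infty\le K^2N/|B|$. (Alternatively one can avoid a pointwise bound by restricting the integration in $V'$ to the sublevel set $\{S\le 2N/|\Omega_0|\}\cap\Omega_0$, which has measure at least $|\Omega_0|/2$ by Chebyshev; but some such device is needed, and your proof as written supplies none.) Apart from this, the architecture of your part (i) — monotone functional $\int_{H^0}S\le N$, Harnack to upgrade $\int_0^\infty\int_{\Omega_0}I<\infty$ to $\int_0^\infty\|I(t,\cdot)\|_\infty\,dt<\infty$, H\"older continuity to get $\|I\|_\infty\to0$, then $\int_0^\infty\|\partial_t S\|_\infty\,dt<\infty$ for convergence of $S$, and the ODE/Gronwall positivity on $M^+\cup H^0$ — matches the paper's.

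Your part (ii) is a correct and genuinely different argument: the paper defers to a Lyapunov-functional computation (the identity \eqref{ly}, following \cite[Theorem 4.4(ii)]{salako2024degenerate}), whereas you argue by contradiction, using Harnack to get a uniform positive lower bound on $I$, the forward invariance of $\{S\ge c\delta\gamma/(\beta-\gamma)\}$ at each $x\in H^+$ to deduce $\liminf_t S(t,x)\ge c\delta\gamma(x)/(\beta(x)-\gamma(x))$ pointwise, and then Fatou together with $\int_\Omega 1/(\beta-\gamma)=\infty$ to contradict $\int_\Omega S\le N$. This is more elementary and self-contained than the paper's route and uses the hypothesis $\int_\Omega 1/(\beta-\gamma)\,dx=\infty$ in a transparent way.
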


Theorem \ref{th2.2} suggests that if the entire habitat \(\bar{\Omega}\) consists solely of high-risk and moderate-risk locations,  the disease will be driven to extinction in the long run when the movement of susceptible individuals is controlled.

Finally, we consider the case that the habitat \(\Omega\) includes some low-risk locations, i.e., there exists \(x \in \bar{\Omega}\) such that \(\beta(x) < \gamma(x)\). 
\begin{tm}\label{th2.3}
    Fix $d_I>0$ and suppose  $H^-\neq\emptyset$. Let $(S,I)$ be the solution of \eqref{ds=0-model}.  Then $\|I(t,\cdot)\|_{\infty}\to 0$ and $\|S(t,\cdot)-S^*\|_{\infty}\to 0$ as $t\to\infty$ for some $S^*\in C^+(\bar{\Omega})$ satisfying $\int_{\Omega}S^*=N$ and $S^*>0$ on $M^+\cup H^0\cup H^-$.
\end{tm}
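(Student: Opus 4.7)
The plan is to (a) prove $\|I(t,\cdot)\|_\infty\to 0$, (b) upgrade to uniform convergence $S(t,\cdot)\to S^*$ in $C(\bar\Omega)$, and (c) verify $\int_\Omega S^*=N$ and $S^*>0$ on $M^+\cup H^0\cup H^-$. The assumption $H^-\neq\emptyset$ enters through a dissipation mechanism: on the low-risk region $S$ must grow wherever $I$ is present, while the conservation law $\int_\Omega(S+I)=N$ bounds that growth and forces $I$ to be integrable in time there.

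\textbf{Step 1 --- Extinction of $I$.} Using $\beta SI/(m+S+I)\le \beta I$, I get the pointwise lower bound
\[
\p_t S=\gamma I-\frac{\beta SI}{m+S+I}\;\ge\;(\gamma-\beta)I\quad\text{on }\bar\Omega.
\]
Integrating over $[0,t]$ and over any compact $K\subset H^-$ on which $\gamma-\beta\ge c_K>0$, combined with $\int_\Omega S(t,\cdot)\,dx\le N$, yields $\int_0^\infty\!\int_K I(s,x)\,dx\,ds\le N/c_K<\infty$. Separately, standard parabolic $L^p$-theory applied to $\p_tI-d_I\Delta I=c(t,x)I$ with $|c|\le\|\beta\|_\infty+\|\gamma\|_\infty$, combined with Proposition \ref{theorem_ds1}, gives uniform-in-$t$ bounds on $\|I(t,\cdot)\|_{W^{2,p}_\nu(\Omega)}$ for $t\ge 1$, so $\{I(t,\cdot):t\ge 1\}$ is relatively compact in $C(\bar\Omega)$ and $t\mapsto\int_K I(t,\cdot)\,dx$ is uniformly Lipschitz; hence $\int_K I(t,\cdot)\,dx\to 0$. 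Now pass to $\omega$-limits: for any $t_n\to\infty$, the time-shifts $(S(\cdot+t_n,\cdot),I(\cdot+t_n,\cdot))$ converge along a subsequence to a global solution $(\tilde S,\tilde I)$ of the same system, and the uniform decay forces $\tilde I\equiv 0$ on $H^-\times(0,\infty)$. Since $\tilde I\ge 0$ solves a linear parabolic equation with bounded coefficients and vanishes at an interior point of $\Omega$ (as $H^-$ is open in $\bar\Omega$ and hence meets $\Omega$), the parabolic strong maximum principle forces $\tilde I\equiv 0$ on $\bar\Omega\times[0,\infty)$. Because every subsequential limit of $I(t,\cdot)$ must therefore be zero, $\|I(t,\cdot)\|_\infty\to 0$.

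\textbf{Step 2 --- Convergence of $S$ and positivity of $S^*$.} From the rewriting
\[
\p_tS=\frac{I\bigl[\gamma(m+I)+(\gamma-\beta)S\bigr]}{m+S+I},
\]
both numerator terms are nonnegative on $H^-\cup H^0$, so $S(\cdot,x)$ is nondecreasing there; together with $\int_\Omega S\le N$ this yields a finite pointwise monotone limit. On $H^+$, $\p_tS<0$ whenever $S>\gamma(m+\|I(t)\|_\infty)/(\beta-\gamma)$, giving an a priori $L^\infty$-bound for $S$ on compact subsets of $H^+$. The main technical obstacle, which I expect to be the hardest step of the proof, is promoting $\|I(t,\cdot)\|_\infty\to 0$ to the summability $\int_0^\infty\|I(t,\cdot)\|_\infty\,dt<\infty$: once it is in hand, $|\p_tS(t,\cdot)|\le(\gamma+\beta)\|I(t,\cdot)\|_\infty$ makes $\{S(t,\cdot)\}$ a Cauchy net in $C(\bar\Omega)$ and gives uniform convergence to some $S^*\in C^+(\bar\Omega)$. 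I would obtain this summability via a principal-eigenvalue argument: once $\|I\|_\infty$ is small and $S$ has become large on $H^-\cup H^0$ by monotonicity, the operator $d_I\Delta+(\beta S/(m+S+I)-\gamma)$ has strictly negative principal eigenvalue, and testing the $I$-equation against the corresponding positive eigenfunction $\phi$ yields exponential decay of $\int_\Omega\phi I$, hence of $\|I\|_\infty$ by interpolation with the compactness above. Finally, the conservation law with $I\to 0$ uniformly gives $\int_\Omega S^*=N$ by dominated convergence, and the positivity claim follows from $\p_tS|_{S=0}=\gamma(x)I(t,x)>0$ for $t>0$ (using $I>0$ from Proposition \ref{theorem_ds1}) together with the bound $|\p_t\ln S|\le\beta(x)\|I(t,\cdot)\|_\infty/m(x)$ on $M^+$ and the monotonicity $\p_tS\ge 0$ on $H^0\cup H^-$.
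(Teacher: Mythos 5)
Your overall skeleton is the right one — integrate the $S$-equation over a small ball in the low-risk region to get $\int_0^\infty\int_K I\,dx\,dt\le N/c_K<\infty$, deduce extinction of $I$, upgrade to summability of $\|I(t,\cdot)\|_\infty$ so that $\int_1^\infty\|\partial_t S(t,\cdot)\|_\infty\,dt<\infty$ and $S(t,\cdot)$ converges uniformly, and finish with monotonicity of $S$ on $H^0\cup H^-$ and the $m>0$ barrier on $M^+$. This matches the paper's proof up to the one step you yourself flag as the hardest, and it is exactly there that your argument has a genuine gap. The missing idea is the Harnack inequality for $I$, Lemma \ref{lem1} (estimate \eqref{Harnak-inequality}): $\|I(t,\cdot)\|_\infty\le c_1\min_{\bar\Omega}I(t,\cdot)$ for $t\ge1$, which holds because $I$ solves a linear parabolic equation with coefficient bounded by $\|\beta\|_\infty+\|\gamma\|_\infty$ independently of $S$. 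With it, your bound $\int_0^\infty\int_K I<\infty$ gives $\int_1^\infty\min_{\bar\Omega}I(t,\cdot)\,dt<\infty$ and hence $\int_1^\infty\|I(t,\cdot)\|_\infty\,dt<\infty$ in one line; combined with the H\"older continuity of $t\mapsto\|I(t,\cdot)\|_\infty$ and Lemma \ref{lem0} this also yields $\|I(t,\cdot)\|_\infty\to0$ directly, so your entire $\omega$-limit/strong-maximum-principle detour in Step 1 becomes unnecessary.

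The substitute you propose — that once $\|I\|_\infty$ is small and $S$ has grown on $H^-\cup H^0$, the operator $d_I\Delta+\bigl(\beta S/(m+S+I)-\gamma\bigr)$ with Neumann conditions has strictly negative principal eigenvalue, giving exponential decay of $\int_\Omega\phi I$ — is false in general under the hypotheses of the theorem. The theorem only assumes $H^-\neq\emptyset$; the high-risk set $H^+$ may occupy most of $\Omega$, and on $H^+\cap M^0$ the coefficient $\beta S/(S+I)-\gamma$ tends to $\beta-\gamma>0$ wherever $S$ stays bounded away from zero as $I\to0$. For small $d_I$ the Neumann principal eigenvalue of $d_I\Delta+c$ is close to $\max_{\bar\Omega}c$, hence positive, and monotone growth of $S$ on $H^-\cup H^0$ (which in any case only gives a finite limit, constrained by $\int_\Omega S\le N$) does nothing to change this. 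Indeed, Proposition \ref{prop2} of the paper shows that in this very regime the extinction of $I$ is driven by the depletion of $S$ on a positive-measure subset of $H^+\cap M^0$, not by linear stability of the disease-free state, so exponential decay cannot be expected and your mechanism for the summability $\int_0^\infty\|I(t,\cdot)\|_\infty\,dt<\infty$ does not close. Your remaining steps (uniform convergence of $S$, $\int_\Omega S^*=N$, positivity on $M^+\cup H^0\cup H^-$) are fine once that summability is supplied by the Harnack estimate.
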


Theorem \ref{th2.3} demonstrates that if $\Omega$ contains low-risk sites, restricting the movement of susceptible individuals will lead to disease extinction. Furthermore, the susceptible population will reside in moderate- and low-risk locations, as well as in areas where the effects of saturated incidence are present. 

Theorems \ref{th2.1}, \ref{th2.2} and \ref{th2.3} conclude that \(S^* > 0\) in \(\bar{\Omega} \setminus (H^+ \cap M^0)\). Due to the continuity of $S^*$, $S^*>0$ in some region of $H^+ \cap M^0$. This raises the question whether the susceptible population may become extinct in some places where the risk-level is high and the saturated incidence effect is absent, i.e., within $H^+ \cap M^0$.  Our following result provides an affirmative answer to this question. 

\begin{prop}\label{prop2} Fix $d_I>0$. Suppose that there is an open subset \(\mathcal{O} \subset M^0\) with a Lipschitz boundary such that 
\(\lambda_0>0\), where \(\lambda_0\) is the principal eigenvalue  of the linear elliptic eigenvalue problem:
\begin{equation}\label{eigen-prob1-1}
    \begin{cases}
      d_I\Delta \varphi +(\beta -\gamma)\varphi=\lambda\varphi, & x\in\mathcal{O},\cr
      \varphi=0, & x\in\partial\mathcal{O}.
    \end{cases}
\end{equation}
Let $(S,I)$ be the solution of \eqref{ds=0-model} such that $\|I(t,\cdot)\|_{\infty}\to 0$ and $\|S(t,\cdot)-S^*\|_{\infty}\to 0$ as $t\to\infty$ for some  $S^*\in C^+(\bar{\Omega})$, then we have 
$$|\{x\in\mathcal{O} :\  S^*(x)=0\}|>0.$$
\end{prop}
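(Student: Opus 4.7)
The plan is to argue by contradiction: assume $|\{x\in\mathcal{O}:S^*(x)=0\}|=0$. Since $S^*$ is a uniform limit of continuous functions, $S^*\in C(\bar\Omega)$ and the open set $\{S^*>0\}\cap\mathcal{O}$ has full Lebesgue measure in $\mathcal{O}$. After (if necessary) replacing $\mathcal{O}$ by a slightly smaller $C^2$-subdomain on which the principal eigenvalue of \eqref{eigen-prob1-1} remains positive---possible by continuity of the principal Dirichlet eigenvalue under monotone domain perturbations---I may assume the principal eigenfunction $\varphi>0$ lies in $C^2(\bar\mathcal{O})$ with $\varphi|_{\partial\mathcal{O}}=0$ and $\partial_\nu\varphi<0$ on $\partial\mathcal{O}$ by Hopf's lemma; proving $|\{S^*=0\}\cap\mathcal{O}|>0$ for this smaller set implies the same for the original. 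I will show that the weighted mass $J(t):=\int_\mathcal{O} I(t,x)\varphi(x)\,dx$ grows exponentially, contradicting $J(t)\leq\|\varphi\|_{L^1(\mathcal{O})}\|I(t,\cdot)\|_\infty\to 0$.

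Testing the $I$-equation on $\mathcal{O}$ (where $m\equiv 0$) against $\varphi$ and applying Green's identity, using $\varphi|_{\partial\mathcal{O}}=0$, $d_I\Delta\varphi=(\lambda_0-(\beta-\gamma))\varphi$, and the non-negativity of $-\int_{\partial\mathcal{O}}d_I I\,\partial_\nu\varphi\,dS$ (since $\varphi=0$, $\partial_\nu\varphi\leq 0$, $I\geq 0$), I obtain
\begin{equation*}
\frac{dJ}{dt}\geq \lambda_0 J(t) - R(t),\qquad R(t):=\int_\mathcal{O}\frac{\beta\varphi I^2}{S+I}\,dx.
\end{equation*}
By Proposition~\ref{theorem_ds1} one has $I(t_0,\cdot)>0$ for some $t_0>0$, whence $J(t_0)>0$, so establishing $R(t)\leq (\lambda_0/2)J(t)$ for all sufficiently large $t$ forces $J(t)\geq J(t_0)e^{(\lambda_0/2)(t-t_0)}\to\infty$, the desired contradiction.

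To control $R(t)$, I establish a uniform spatial lower bound for $I$. The coefficient $c(t,x):=-\gamma+\beta S/(m+S+I)$ in the linear parabolic equation $\partial_t I=d_I\Delta I+cI$ is uniformly bounded by $M:=\|\beta\|_\infty+\|\gamma\|_\infty$, so $v:=e^{Mt}I$ satisfies $\partial_t v-d_I\Delta v=(c+M)v\geq 0$ and $w:=e^{-Mt}I$ satisfies $\partial_t w-d_I\Delta w=(c-M)w\leq 0$, both with Neumann data. Letting $p_1(x,y)$ denote the Neumann heat kernel on $\bar\Omega$ at time $1$, which is continuous and strictly positive with $0<p_1^{\min}\leq p_1\leq p_1^{\max}<\infty$, the comparison principle applied to $v$ and $w$ on $[T,T+1]$ yields $\inf_x I(T+1,x)\geq e^{-M}p_1^{\min}\|I(T,\cdot)\|_{L^1(\Omega)}$ and $\sup_x I(T+1,x)\leq e^M p_1^{\max}\|I(T,\cdot)\|_{L^1(\Omega)}$. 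Dividing gives $I(t,x)\geq c_H\|I(t,\cdot)\|_\infty$ on $\bar\Omega$ for all $t\geq 1$, with $c_H:=e^{-2M}p_1^{\min}/p_1^{\max}>0$; in particular $J(t)\geq c_H\|\varphi\|_{L^1(\mathcal{O})}\|I(t,\cdot)\|_\infty$ for $t\geq 1$.

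With this Harnack bound, I split $R(t)$ across $\mathcal{O}_\eta:=\{x\in\mathcal{O}:S^*(x)\geq\eta\}$ and its complement. Since $|\mathcal{O}\setminus\mathcal{O}_\eta|\downarrow|\{S^*=0\}\cap\mathcal{O}|=0$ as $\eta\downarrow 0$, I fix $\eta$ so small that $\|\beta\|_\infty\|\varphi\|_\infty|\mathcal{O}\setminus\mathcal{O}_\eta|\leq(\lambda_0/4)c_H\|\varphi\|_{L^1(\mathcal{O})}$. For $t$ large enough that $S(t,\cdot)\geq\eta/2$ on $\mathcal{O}_\eta$ and $\|I(t,\cdot)\|_\infty\leq(\lambda_0\eta)/(8\|\beta\|_\infty)$, the bound $I^2/(S+I)\leq 2\|I\|_\infty I/\eta$ on $\mathcal{O}_\eta$ contributes at most $(\lambda_0/4)J(t)$ to $R(t)$, while $I^2/(S+I)\leq\|I\|_\infty$ combined with the Harnack bound controls the contribution on $\mathcal{O}\setminus\mathcal{O}_\eta$ by $(\lambda_0/4)J(t)$; summing gives $R(t)\leq(\lambda_0/2)J(t)$ and closes the argument. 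The main difficulty is the uniform spatial Harnack estimate: without a time-independent comparison $J(t)\gtrsim\|I(t,\cdot)\|_\infty$, the part of $R(t)$ supported near $\{S^*=0\}$ cannot be absorbed into $J(t)$, even though that set has small measure under the contradiction hypothesis.
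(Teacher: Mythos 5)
Your proposal is correct, and it shares the paper's core machinery: both proofs test the $I$-equation against the positive principal eigenfunction $\varphi$ of \eqref{eigen-prob1-1}, use Green's identity with $\varphi|_{\partial\mathcal O}=0$ and $\partial_\nu\varphi\le 0$ to discard the boundary terms, arrive at the same differential inequality $\frac{d}{dt}\int_{\mathcal O}\varphi I\ \ge\ \lambda_0\int_{\mathcal O}\varphi I-\int_{\mathcal O}\frac{\beta\varphi I^2}{S+I}$, and rely on the elliptic-type Harnack bound $I(t,\cdot)\ge c\|I(t,\cdot)\|_\infty$ (the paper quotes this as Lemma 3.3 via H\'uska's parabolic Harnack inequality; your heat-kernel/comparison derivation of the same constant is an acceptable elementary substitute). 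Where you genuinely diverge is in how the contradiction is extracted. The paper normalizes by $\int_{\mathcal O}\varphi I$, integrates the resulting logarithmic derivative, and shows that the time average $\frac1t\int_1^t\int_{\mathcal O}G$ of the error density $G=\frac{\beta I}{S+I}\frac{\varphi I}{\int_{\mathcal O}\varphi I}$ must have $\liminf\ge\lambda_0$, while dominated convergence (using $G$ bounded and $G(t,x_0)\to 0$ in Ces\`aro mean wherever $S^*(x_0)>0$) forces that average to zero when $|\{S^*=0\}\cap\mathcal O|=0$. You instead absorb the error term pointwise in time: splitting $\mathcal O$ into $\{S^*\ge\eta\}$, where $S+I$ is bounded below and $\|I\|_\infty$ is eventually small, and its complement, whose measure is small under the contradiction hypothesis and whose contribution is controlled by the Harnack lower bound $J(t)\ge c_H\|\varphi\|_{L^1}\|I(t)\|_\infty$, you get $R(t)\le(\lambda_0/2)J(t)$ for large $t$ and hence exponential growth of $J$, contradicting $J(t)\to 0$. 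Your route is more quantitative and avoids the Ces\`aro/dominated-convergence step, at the cost of invoking the uniform convergence $S(t,\cdot)\to S^*$ on the good set (which is available by hypothesis); the paper's route needs only pointwise positivity of $S^*$ a.e. Your preliminary replacement of the Lipschitz domain $\mathcal O$ by a smooth exhausting subdomain with still-positive principal eigenvalue is a sensible precaution (the paper applies Hopf's lemma on a Lipschitz boundary without comment), and is legitimate since the conclusion for a subdomain implies it for $\mathcal O$.
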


Combining Theorems \ref{th2.1}, \ref{th2.2} and \ref{th2.3}, and Proposition \ref{prop2}, we can prove the following conclusion.

\begin{coro}\label{cor1}  Let $(S,I)$ be the solution of \eqref{ds=0-model}. Suppose that one of the following conditions holds:
\begin{itemize}
    \item[{\rm (i)}] $H^+=\bar\Omega$,  $M^0$ has nonempty interior, and $N\leq\int_{\Omega}\frac{\gamma m}{\beta-\gamma}$;
    \item[{\rm (ii)}] $H^+\cup H^0=\bar\Omega$ and both $M^+\cap H^0$ and  $M^0\cap H^+$ have nonempty interior;
    \item[{\rm (iii)}] Both $H^-$ and $M^0\cap H^+$ have nonempty interior.
\end{itemize}
Then there is $d_0>0$ such that for every $0<d_I<d_0$, there is a measurable set $\Omega^*\subseteq M^0\cap H^+$ with a positive measure such that $S(t,x)\to 0$ as $t\to\infty$ for every $x\in\Omega^*$. 
\end{coro}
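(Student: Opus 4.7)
\medskip

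\noindent\textbf{Proof proposal.} The plan is to reduce the corollary to a single application of Proposition~\ref{prop2} after choosing an appropriate test subdomain inside $M^0\cap H^+$ and verifying that the associated principal eigenvalue becomes positive for small $d_I$.

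First, I would show that in each of the cases (i)--(iii), the long-time behavior of the solution $(S,I)$ is already covered by one of Theorems~\ref{th2.1}--\ref{th2.3}. Under case (i), the assumption $N\le \int_{\Omega}\gamma m/(\beta-\gamma)$ together with $H^+=\bar\Omega$ places us in alternative (i) of Theorem~\ref{th2.1}, so $\|I(t,\cdot)\|_\infty\to 0$ and $\|S(t,\cdot)-S^*\|_\infty\to 0$ for some $S^*\in C^+(\bar\Omega)$. Under case (ii), $M^+\cap H^0$ has nonempty interior and $H^+\cup H^0=\bar\Omega$, so Theorem~\ref{th2.2}(i) gives the same conclusion. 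Under case (iii), $H^-\neq\emptyset$, and Theorem~\ref{th2.3} again yields $I(t,\cdot)\to 0$ and $S(t,\cdot)\to S^*$ in $C(\bar\Omega)$.

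Next, in each of the three cases the set $M^0\cap H^+$ has nonempty interior: for (i) this is immediate since $H^+=\bar\Omega$ and $\mathrm{int}(M^0)\neq\emptyset$, and for (ii)--(iii) it is hypothesized. Inside this interior I would pick an open ball $\mathcal{O}$, which automatically has a Lipschitz (indeed smooth) boundary and satisfies $\overline{\mathcal{O}}\subset M^0\cap H^+$. Let $\lambda_0(d_I)$ denote the principal eigenvalue of \eqref{eigen-prob1-1} on $\mathcal{O}$. By the standard variational characterization,
\begin{equation*}
\lambda_0(d_I)=\sup_{\varphi\in H^1_0(\mathcal{O}),\,\|\varphi\|_2=1}\int_{\mathcal{O}}\bigl((\beta-\gamma)\varphi^2-d_I|\nabla\varphi|^2\bigr).
\end{equation*}
Because $\overline{\mathcal{O}}\subset H^+$, we have $\min_{\overline{\mathcal{O}}}(\beta-\gamma)>0$; choosing any fixed nonnegative test function $\varphi_0\in C_c^\infty(\mathcal{O})$ with $\|\varphi_0\|_2=1$ produces $\lambda_0(d_I)\ge \min_{\overline{\mathcal{O}}}(\beta-\gamma)-d_I\|\nabla\varphi_0\|_2^2$. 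Hence there is $d_0>0$ (depending on $\mathcal{O}$) such that $\lambda_0(d_I)>0$ for all $0<d_I<d_0$.

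Finally, for each such $d_I$, I would apply Proposition~\ref{prop2} with this choice of $\mathcal{O}$ to obtain $|\{x\in\mathcal{O}:\,S^*(x)=0\}|>0$. Setting $\Omega^*:=\{x\in\mathcal{O}:\,S^*(x)=0\}\subset M^0\cap H^+$, and using the uniform convergence $\|S(t,\cdot)-S^*\|_\infty\to 0$ proved in the first step, we deduce $S(t,x)\to 0$ as $t\to\infty$ for every $x\in\Omega^*$, which is precisely the desired conclusion. The only nontrivial point is the last step (invoking Proposition~\ref{prop2}); every other ingredient is a direct repackaging of the theorems already established. The main (but modest) obstacle is ensuring the smallness threshold $d_0$ works simultaneously for a fixed $\mathcal{O}$; since $\mathcal{O}$ is chosen once and for all after the hypotheses are fixed, this is handled by the quantitative eigenvalue bound displayed above.
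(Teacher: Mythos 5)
Your proposal is correct and follows essentially the same route as the paper: invoke Theorems \ref{th2.1}--\ref{th2.3} to get the convergence $\|I(t,\cdot)\|_\infty\to0$ and $\|S(t,\cdot)-S^*\|_\infty\to0$ in each case, choose an open subset $\mathcal{O}$ of the interior of $M^0\cap H^+$, observe that the principal eigenvalue $\lambda_0$ of \eqref{eigen-prob1-1} is positive for all sufficiently small $d_I$, and conclude via Proposition \ref{prop2}. The only cosmetic difference is that you justify $\lambda_0>0$ by a variational test-function bound while the paper cites the standard monotonicity of $\lambda_0$ in $d_I$ and its limit $\max_{\bar{\mathcal{O}}}(\beta-\gamma)$ as $d_I\to0$; both are equivalent and routine.
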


\begin{rk}\label{re2} Let \(\lambda_0\) be defined in Proposition \ref{prop2}. It is well known that $\lambda_0$ is  decreasing in \(d_I \in (0, \infty)\) with
\[
\lim_{d_I \to 0} \lambda_0 = \max_{x \in \bar{\mathcal{O}}} (\beta(x) - \gamma(x)) \ \ \text{and} \quad \lim_{d_I \to \infty} \lambda_0 = -\infty.
\]
Take $\mathcal{O}=M^0\cap H^+$  and assume that $M^0\cap H^+\subset\Omega$ is a domain with Lipschitz boundary. Thus, \(\max_{x \in M^0\cap H^+} (\beta(x) - \gamma(x)) > 0\), and there is a critical value \(d_0^* > 0\) such that \(\lambda_0 > 0\) for \(0 < d_I < d_0^*\) and \(\lambda_0 \leq 0\) for \(d_I \geq d_0^*\).
Thus, it is interesting to explore whether \(S^*\) can vanish somewhere in \(M^0\cap H^+\) for \(d_I \geq d_I^*\) in the context of Theorem \ref{th2.1}(i), Theorem \ref{th2.2} and Theorem \ref{th2.3}. Our numerical simulations suggest that \(S^*>0\) in \(M^0\cap H^+\) for large $d_I$ (see Figure 1(C) in Section 5), which would imply that the susceptible population could occupy the entire habitat in such a case.
\end{rk}

\subsection{Results for system \eqref{dI=0-model}}
The following result concerns the existence and nonnegativity of the solutions of system \eqref{dI=0-model}. We omit the proof since it is similar to that of Proposition \ref{theorem_ds1}.  
\begin{prop}\label{theorem_di1}
 For any initial data $(S_0,I_0)\in [C^+(\bar{\Omega})]^2$,   system \eqref{dI=0-model} admits a  unique  solution  $(S(t,x),I(t,x))$ defined for all $t\ge 0$ and $x\in\bar{\Omega}$ with 
$$
S,\ I\in C([0,\infty), C^+(\bar{\Omega}))\cap C^{1}((0,\infty), C(\bar{\Omega})) \quad \text{and}\quad   S\in   C((0,\infty), \cap_{p\ge 1}W_\nu^{2,p}({\Omega})).
$$  
If $I_0\not\equiv 0$, then $S(t,x)>0$ for all   $x\in\bar{\Omega}$ for all $t>0$; if $I_0(x_0)>0$ for some $x_0\in\bar\Omega$, then $I(t, x_0)>0$ for all $t\ge 0$; $I_0(x_0)=0$ for some $x_0\in\bar\Omega$, then $I(t, x_0)=0$ for all $t\ge 0$.
\end{prop}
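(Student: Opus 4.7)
The plan is to follow the structure of Proposition \ref{theorem_ds1}, with the roles of $S$ and $I$ exchanged: diffusion now acts on $S$, while $I$ satisfies a pointwise ODE at each $x$. I will (i) establish local existence via a contraction argument in mild form, (ii) verify nonnegativity, (iii) extend to $[0,\infty)$ via an a priori $L^\infty$ bound, and (iv) derive the pointwise statements about strict positivity.

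For step (i), let $T(t):=e^{d_St\Delta_\nu}$ denote the Neumann heat semigroup on $C(\bar\Omega)$, and write the system in mild form
\begin{align*}
S(t)&=T(t)S_0+\int_0^t T(t-s)\bigl[\gamma I(s)-F(S(s),I(s))\bigr]\,ds,\\
I(t,x)&=I_0(x)\exp\!\Bigl(\int_0^t\!\Bigl[\tfrac{\beta(x)S(s,x)}{m(x)+S(s,x)+I(s,x)}-\gamma(x)\Bigr]ds\Bigr),
\end{align*}
where $F(S,I):=\beta SI/(m+S+I)$, set to $0$ where the denominator vanishes. A direct computation (bounding $|\p_S\phi|,|\p_I\phi|\le 1$ for $\phi:=SI/(m+S+I)$ on $[0,\infty)^2$) shows $F$ is $\|\beta\|_\infty$-Lipschitz on $[0,\infty)^2$; after extending Lipschitz-continuously to $\R^2$ by positive parts, the mild map becomes a contraction on $C([0,T],C(\bar\Omega))^2$ for $T$ small depending only on $\|S_0\|_\infty+\|I_0\|_\infty$. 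This yields a unique local mild solution; parabolic smoothing for the $S$-equation supplies $S\in C((0,\infty),\cap_{p\ge 1}W_\nu^{2,p}(\Omega))$, while the exponential formula gives $I\in C^1((0,\infty),C(\bar\Omega))$.

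For step (ii), the formula for $I$ immediately yields $I\ge 0$, with $I(t,x_0)>0$ iff $I_0(x_0)>0$, since the exponential factor is strictly positive. For $S$, rewrite the equation as $\p_tS-d_S\Delta S+h(t,x)S=\gamma I$ with $h:=\beta I/(m+S+I)\in L^\infty$ and $\gamma I\ge 0$; the parabolic comparison principle with Neumann boundary data yields $S\ge 0$. Consequently the Lipschitz extension and the original nonlinearity agree along the solution, so it solves \eqref{dI=0-model}. For step (iii), since $\beta S/(m+S+I)\le\beta$, one has $\p_tI\le(\beta-\gamma)I$ pointwise, hence $\|I(t,\cdot)\|_\infty\le\|I_0\|_\infty e^{\|\beta-\gamma\|_\infty t}$; feeding this into the mild formula for $S$ and using $\|T(t)\|_{L^\infty\to L^\infty}\le 1$ produces $\|S(t,\cdot)\|_\infty\le C(t)$ on every finite interval, ruling out blow-up and extending the solution to $[0,\infty)$.

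For step (iv), the assertions about $I$ are read off from its explicit representation. When $I_0\not\equiv 0$, pick $x_0$ with $I_0(x_0)>0$; continuity of $I$ together with the exponential representation give $I>0$ on a neighborhood of $(0,x_0)$, so the source $\gamma I$ in the linear parabolic equation $\p_tS-d_S\Delta S+hS=\gamma I$ (with Neumann boundary condition and $h\in L^\infty$) is nontrivial and nonnegative; the strong maximum principle and Hopf lemma then force $S(t,x)>0$ for every $x\in\bar\Omega$ and every $t>0$. I do not foresee a substantive obstacle, as each step mirrors the corresponding argument in Proposition \ref{theorem_ds1}. The only mildly delicate point is that $m+S+I$ can vanish where $m$, $S$, $I$ all vanish, which is why one introduces the Lipschitz extension in step (i); nonnegativity in step (ii) then shows the modification is immaterial along the true solution.
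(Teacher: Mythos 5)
Your proposal is correct and follows essentially the route the paper intends: the paper omits the proof of this proposition entirely, pointing to Proposition \ref{theorem_ds1}, whose proof is exactly the standard semigroup/contraction argument, global extension via the bound $SI/(m+S+I)\le I$ (equivalently $\p_t I\le(\beta-\gamma)I$), and the strong maximum principle for the diffusing component; your write-up simply supplies the details. One small technical point: inside the fixed-point map you represent $I$ by the exponential formula, whose integrand $\beta S/(m+S+I)-\gamma$ is \emph{not} Lipschitz (nor even continuous) at $S=I=0$ on $M^0$, whereas the Lipschitz bound you actually verify is for $F=\beta SI/(m+S+I)$; the clean fix is to run the contraction with the Duhamel form $I(t,x)=e^{-\gamma(x)t}I_0(x)+\int_0^t e^{-\gamma(x)(t-s)}F(S,I)\,ds$ and derive the exponential representation (and hence the sign-preservation of $I(t,x_0)$) a posteriori, using that the exponent is bounded by $\|\beta\|_\infty t$.
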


In this rest of this subsection, we always assume $(S_0,I_0)\in [C^+(\bar{\Omega})]^2$, $I_0\not\equiv0$, and  $\int_\Omega (S_0+I_0)=N$. 

 \begin{tm}\label{th2.4} Let $(S,I)$ be the solution of \eqref{dI=0-model}. Then there exist two positive numbers $c_1,\,c_2$,  independent of initial data and $m$ such that 
 \begin{equation}\nonumber
     c_1\leq\liminf_{t\to\infty}\min_{x\in\bar{\Omega}}S(t,x)\,\ \mbox{and}\ \ \limsup_{t\to\infty}(\|S(t,\cdot)\|_{\infty}+\|I(t,\cdot)\|_{\infty})\le c_2.
 \end{equation}
 \end{tm}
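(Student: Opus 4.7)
Plan: The theorem has two parts, the asymptotic $L^\infty$ upper bound on $S+I$ and the pointwise lower bound on $S$. Both rely critically on the universal inequality $\frac{\beta(x)SI}{m(x)+S+I}\le\min(\beta(x)S,\beta(x)I)$, which is valid for every $m\ge 0$ and is precisely what makes the constants independent of $m$. I will write $\beta_{\max}=\max_{\bar\Omega}\beta$, $\gamma_{\min}=\min_{\bar\Omega}\gamma$, etc.

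For the lower bound, I would first establish $\liminf_{t\to\infty}\int_\Omega S(t,\cdot)\,dx>0$ via a scalar ODE argument. Integrating the $I$-equation, using $\frac{\beta SI}{m+S+I}\le\beta S$ together with the conservation $\int_\Omega I+\int_\Omega S=N$, the function $\psi(t):=\int_\Omega I(t,\cdot)\,dx$ satisfies
\[
\psi'(t)\le \beta_{\max}\int_\Omega S(t,\cdot)-\gamma_{\min}\psi(t)=\beta_{\max}N-(\beta_{\max}+\gamma_{\min})\psi(t),
\]
so by Gr\"onwall $\limsup_{t\to\infty}\psi(t)\le\tfrac{\beta_{\max}N}{\beta_{\max}+\gamma_{\min}}<N$, i.e.\ $\liminf_{t\to\infty}\int_\Omega S\ge\tfrac{\gamma_{\min}N}{\beta_{\max}+\gamma_{\min}}>0$. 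I would then rewrite the $S$-equation as $\partial_tS-d_S\Delta S+c(t,x)S=\gamma I\ge 0$ with the bounded nonnegative coefficient $c(t,x)=\frac{\beta I}{m+S+I}\in[0,\beta_{\max}]$, so that $S(t,\cdot)\ge e^{-\beta_{\max}\tau}e^{\tau d_S\Delta_N}S(t-\tau,\cdot)$ for any $\tau>0$. Choosing $\tau=1$ and invoking the positivity-improving Harnack estimate $e^{d_S\Delta_N}f\ge k_1\int_\Omega f$ for $f\ge 0$ (with $k_1=k_1(d_S,\Omega)>0$ coming from the strictly positive Neumann heat kernel) then yields $\min_{\bar\Omega}S(t,\cdot)\ge e^{-\beta_{\max}}k_1\cdot\liminf_{t\to\infty}\int_\Omega S\ge c_1$ for some explicit $c_1>0$ depending only on $d_S,\Omega,\beta,\gamma,N$.

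For the upper bound, multiplying the $I$-equation by $I^{p-1}$, integrating, using $\frac{SI}{m+S+I}\le S$ and applying H\"older to $\int SI^{p-1}$ produces the scalar inequality
\[
\tfrac{d}{dt}\|I(t,\cdot)\|_{L^p}\le -\gamma_{\min}\|I(t,\cdot)\|_{L^p}+\beta_{\max}\|S(t,\cdot)\|_{L^p},\qquad p\in[1,\infty],
\]
so $\limsup_t\|I(t,\cdot)\|_{L^p}\le(\beta_{\max}/\gamma_{\min})\limsup_t\|S(t,\cdot)\|_{L^p}$ and the task reduces to bounding $S$. Starting from $\|I(t,\cdot)\|_{L^1}\le N$ (conservation), I would bootstrap: writing the $S$-equation as $\partial_t S=d_S\Delta S+g$ with $|g|\le\gamma_{\max}I$, the Duhamel formula together with the standard Neumann heat-semigroup $L^p$–$L^q$ smoothing estimate (and the fact that the mean part $\bar g=-\psi'/|\Omega|$ contributes a bounded amount by total-variation control of $\psi$) promotes $\limsup\|S\|_{L^p}$ into $\limsup\|S\|_{L^q}$ whenever $\tfrac{n}{2}(1/p-1/q)<1$; this in turn feeds back into the $I$-inequality above. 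Finitely many iterations reach some $q>n/2$, after which parabolic $L^p$-regularity (Moser iteration) yields $\limsup_t\|S(t,\cdot)\|_\infty<\infty$ with a constant depending only on $d_S,\Omega,\beta,\gamma,N$; the displayed inequality with $p=\infty$ then transfers the bound to $I$, giving $c_2$.

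The main obstacle is the $L^p$-bootstrap for $S$: because $I$ does not diffuse, the source $\gamma I-\tfrac{\beta SI}{m+S+I}$ in the $S$-equation cannot be treated as externally prescribed, so the $L^p$-estimates for $S$ and $I$ must be propagated in tandem, the coupling being supplied by the scalar ODE inequality above. A secondary subtlety is that constants are preserved under the Neumann heat semigroup, so the mean part of the source must be handled separately via the identity $\int_\Omega g=-\psi'$ and the boundedness of $\psi$. Independence of $c_1,c_2$ from $m$ and from the initial data is preserved throughout because every nonlinearity is estimated either through the universal bound $\frac{\beta SI}{m+S+I}\le\min(\beta S,\beta I)$ or through asymptotic ($t\to\infty$) arguments that forget initial conditions.
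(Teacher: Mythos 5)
Your proposal is correct, and its overall architecture coincides with the paper's: the lower bound is obtained exactly as in Lemma \ref{lem4.1} and Proposition \ref{prop3.1} (first the integral persistence $\liminf_{t\to\infty}\int_\Omega S\ge \gamma_{\min}N/(\gamma_{\min}+\|\beta\|_\infty)$ by an ODE comparison using $\beta SI/(m+S+I)\le \beta S$ and conservation of $N$, then the pointwise bound via $\partial_tS\ge d_S\Delta S-\|\beta\|_\infty S$ together with the strict positivity/Harnack property of the Neumann heat semigroup at time $1$), and the upper bound is a bootstrap from the conserved $L^1$ mass up to $L^\infty$ for $S$, followed by an ODE comparison transferring the bound to $I$. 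The one place where you genuinely diverge is the engine of the $L^p$ bootstrap in Proposition \ref{prop3.2}: the paper tests the two equations with $(p+1)S^{p}$ and $(p+1)I^{p}$, absorbs the cross terms with Young's and the Gagliardo--Nirenberg inequalities, and closes the argument by contradiction with a critical exponent $p_\infty$; you instead iterate the Duhamel formula with the $L^p$--$L^q$ smoothing of $e^{td_S\Delta}$ on a unit time window, coupling back to $I$ at each exponent through the scalar inequality $\frac{d}{dt}\|I\|_{L^p}\le-\gamma_{\min}\|I\|_{L^p}+\|\beta\|_\infty\|S\|_{L^p}$. Your route is arguably more direct (the paper itself uses exactly your semigroup step for the final passage from $L^p$ with $p>n$ to $L^\infty$), at the cost of having to respect the order of the iteration ($S$ first, then $I$, at each level); both keep every constant dependent only on $\beta,\gamma,d_S,\Omega,N$, hence independent of $m$ and of the initial data. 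Two cosmetic points: the source in the $S$-equation satisfies $|g|\le(\|\beta\|_\infty+\|\gamma\|_\infty)I$ rather than $|g|\le\gamma_{\max}I$ (harmless), and the worry about the constant mode of the Neumann semigroup is unnecessary, since all smoothing estimates are invoked only on the bounded window $t\in(0,1]$, where the $L^p$--$L^q$ bound holds for the full semigroup.
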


According to Theorem \ref{th2.4}, if the movement of the infected population is  restricted, the susceptible population will eventually occupy the entire habitat, regardless of the transmission and recovery rates or the presence of a saturated incidence effect. Therefore, in our subsequent analysis, we primarily focus on the long-term dynamics of the infected population.

The following result addresses the scenario when the entire habitat is considered high-risk. Denote by $\chi_{\{I_0>0\}}$ the characteristic function of the subset $\{x\in\bar\Omega:\ I_0>0\}$ of  $\bar\Omega$.

 \begin{tm}\label{th2.5} Fix $d_S>0$ and suppose that $H^+=\bar{\Omega}$.  Let $(S,I)$ be the solution of \eqref{dI=0-model}.   
 Then we have
\begin{equation}\label{th2.5-eq1}
     \lim_{t\to\infty}\Big\|S(t,\cdot)-\frac{1}{|\Omega|}\int_{\Omega}S(t,\cdot)dx\Big\|_{\infty}=0,
\end{equation}
and for any $x\in\{I_0>0\}:=\{x\in\bar\Omega:\ I_0(x)>0\}$,
    \begin{equation}\label{th2.5-eq2}
      \liminf_{t\to\infty}I(t,x)>0\ \  \text{if}\  m(x)=0, 
    \end{equation}
and 
    \begin{equation}\label{th2.5-eq3}
        \lim_{t\to\infty}I(t,x)=0\ \ \text{if} \ N\le \frac{|\Omega|m(x)\gamma(x)}{\beta(x)-\gamma(x)}.
    \end{equation}
     Furthermore,   the following conclusions hold.
    \begin{itemize}
        \item[\rm(i)] If either $$\frac{|\Omega|m\gamma}{\beta-\gamma}<\min\Big\{N,\ N+\int_{\{I_0>0\}}m        -\frac{N}{|\Omega|}\int_{\{I_0>0\}}\frac{\beta-\gamma}{\gamma}\Big\}$$ or $$\frac{m\gamma}{\beta-\gamma}\Big(|\Omega|+\int_{\{I_0>0\}}\frac{\beta-\gamma}{\gamma}\Big)<N,
            $$  
            then $S(t,\cdot)\to \hat{S}$ uniformly on $\bar{\Omega}$ and $I(t,\cdot)\to \hat{I}$ in $L^p(\Omega)\,(p\geq1)$ as $t\to\infty$, where 
        \begin{equation}
            \hat{S}:=\frac{N+\int_{\{I_0>0\}}m}{|\Omega|+\int_{\{I_0>0\}}\frac{\beta-\gamma}{\gamma}}
            \quad \text{and}\quad \hat{I}:=\Big(\frac{\beta-\gamma}{\gamma}\hat{S}-m\Big)\chi_{\{I_0>0\}}. 
        \end{equation}
        \item[\rm(ii)] If   $\frac{|\Omega|m\gamma}{\beta-\gamma}\ge {N}$, then $\|I(t,\cdot)\|_{\infty}\to 0$ and $\|S(t,\cdot)-\frac{N}{|\Omega|}\|_{\infty}\to 0$ as $t\to\infty$.
    \end{itemize}
\end{tm}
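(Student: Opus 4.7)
The plan is to establish the theorem in three stages: first prove the spatial homogenization \eqref{th2.5-eq1} of $S$; then use the resulting asymptotically autonomous scalar ODE satisfied by $I(t,x)$ at each fixed $x$ to obtain the pointwise claims \eqref{th2.5-eq2} and \eqref{th2.5-eq3}; and finally combine these ingredients with the mass conservation $\int_\Omega(S+I)\,dx=N$ to identify the limits in parts (i) and (ii) via a Lyapunov-type functional.

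For the spatial homogenization step, the starting point is Theorem \ref{th2.4}, which supplies uniform $L^\infty$ bounds on $S$ and $I$. Because $S$ solves a parabolic equation with uniformly bounded source $f:=\gamma I-\beta SI/(m+S+I)$ and homogeneous Neumann data, standard $L^p$ and Schauder estimates yield uniform $C^{1+\alpha/2,2+\alpha}$ bounds that make $\{S(t,\cdot)\}_{t\ge 1}$ relatively compact in $C(\bar\Omega)$. Writing $S=\bar S(t)+\tilde S(t,x)$ with $\int_\Omega\tilde S=0$ and testing the equation for $\tilde S$ against itself gives
\[
\tfrac{1}{2}\tfrac{d}{dt}\|\tilde S\|_{L^2}^2+d_S\|\nabla\tilde S\|_{L^2}^2=\int_\Omega\tilde S(f-\bar f)\,dx,
\]
whose right-hand side is not sign-definite. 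I would therefore combine this identity with a LaSalle-type $\omega$-limit argument: along an arbitrary sequence $t_n\to\infty$, the translated trajectories $(S(t_n+\cdot,\cdot),I(t_n+\cdot,\cdot))$ converge, after passing to a subsequence, to an entire bounded solution $(S^\infty,I^\infty)$ of the same system; using the conservation law together with the pointwise $I$-equation one shows $f^\infty\equiv 0$, which forces $S^\infty$ to be harmonic with Neumann condition and hence constant in $x$. Relative compactness then upgrades this to \eqref{th2.5-eq1}.

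With \eqref{th2.5-eq1} in hand, the $I$-equation at each fixed $x$ becomes an asymptotically autonomous scalar ODE with growth rate $\beta(x)\bar S(t)/(m(x)+\bar S(t)+I)-\gamma(x)$. For \eqref{th2.5-eq2}, the lower bound $\bar S(t)\ge c_1>0$ from Theorem \ref{th2.4} makes this rate at least some $\delta>0$ for all sufficiently small $I$ and large $t$ when $m(x)=0$, and a barrier comparison with the autonomous ODE $i'=i\bigl(\beta c_1/(c_1+i)-\gamma\bigr)$ prevents $I(t,x)$ from reaching zero in the limit. For \eqref{th2.5-eq3}, the assumption combined with $\bar S(t)\le N/|\Omega|$ (which follows from $\bar S+\bar I=N/|\Omega|$) gives $\beta(x)\bar S(t)\le\gamma(x)(m(x)+\bar S(t))$, so the growth rate is bounded away from $0$ from above asymptotically and $I(t,x)\to 0$ exponentially.

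For part (i), the candidates $\hat S,\hat I$ are determined by the equilibrium relation $\gamma=\beta\hat S/(m+\hat S+\hat I)$ on $\{I_0>0\}$, $\hat I=0$ otherwise, and $|\Omega|\hat S+\int_{\{I_0>0\}}\hat I=N$; a direct check shows that each of the two hypotheses on $N$ is precisely a condition that forces $\hat I>0$ on $\{I_0>0\}$. I would set up a Goh-type Lyapunov functional $V(S,I)=\int_\Omega(S-\hat S\log S)\,dx+\int_{\{I_0>0\}}c(x)(I-\hat I\log I)\,dx$ with a suitable positive weight $c(x)$ chosen to cancel the mixed terms and yield $dV/dt\le 0$ with equality only at $(\hat S,\hat I)$; LaSalle's invariance principle combined with the precompactness of $\{S(t,\cdot)\}$ and \eqref{th2.5-eq1} then yields the claimed convergences, where the $L^p$ (rather than $L^\infty$) convergence for $I$ reflects the fact that $\hat I$ jumps across $\partial\{I_0>0\}$. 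Part (ii) follows more directly: its hypothesis forces $\hat I\equiv 0$ and $\hat S=N/|\Omega|$, and the argument for \eqref{th2.5-eq3} then applies uniformly in $x$. The principal obstacle is the spatial-homogenization step, since $I$ has no diffusion and $f$ can in principle sustain persistent spatial oscillations, so collapsing the admissible $\omega$-limit set to spatially constant $S$-profiles must exploit the global structure of the coupled system rather than a single energy estimate.
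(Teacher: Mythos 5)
There is a genuine gap at the step you yourself identify as the principal obstacle: the spatial homogenization \eqref{th2.5-eq1}. Your plan is to pass to an entire bounded solution $(S^\infty,I^\infty)$ along translated trajectories and to assert that ``the conservation law together with the pointwise $I$-equation'' forces $f^\infty\equiv 0$. No mechanism is given for this, and none is available from those two ingredients alone: an entire bounded solution of the coupled system need not be an equilibrium, and the conservation of $\int_\Omega(S^\infty+I^\infty)$ is compatible with persistent oscillation of $f^\infty$. A LaSalle argument needs a Lyapunov functional to collapse the $\omega$-limit set, and the one identity you write down (the $L^2$ energy of $\tilde S$) is, as you note, not sign-definite. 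The paper's proof supplies exactly the missing object: setting $Z=\gamma(m+I)/(\beta-\gamma)$ (well defined because $H^+=\bar\Omega$), the functional
\begin{equation*}
V(t)=\frac{1}{2}\int_{\Omega}S^2+\int_{\Omega}\frac{(\beta-\gamma)Z^2}{2\gamma}
\end{equation*}
satisfies
\begin{equation*}
\frac{dV}{dt}=-d_S\int_{\Omega}|\nabla S|^2-\int_{\Omega}(\beta-\gamma)\frac{(S-Z)^2I}{m+S+I}\le 0,
\end{equation*}
both dissipation terms being nonnegative precisely because $\beta>\gamma$ everywhere. Integrating in time, invoking the H\"older continuity of $t\mapsto\|\nabla S(t,\cdot)\|_{L^2}$ and Lemma \ref{lem0}, and then using Poincar\'e plus parabolic regularity yields \eqref{th2.5-eq1}. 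Without this (or an equivalent sign-definite estimate), your homogenization step does not close.

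The same dissipation integral has a second payoff that your outline misses and that drives part (i): by Fubini and the pointwise H\"older continuity in $t$, it gives $(S-Z)^2I\to 0$ a.e.\ in $\Omega$, which is the algebraic relation $\gamma(m+I)=( \beta-\gamma)S$ in the limit wherever $I$ stays positive. The paper then sandwiches $S$ between constants using \eqref{th2.5-eq1}, runs pointwise ODE comparisons to get $\limsup$ and $\liminf$ bounds on $I(t,x)$, uses Fatou to control $\limsup_t\int_\Omega I$, and identifies $\hat S$ from mass conservation along subsequences via dominated convergence. Your alternative — a Goh-type functional $\int(S-\hat S\log S)+\int c(x)(I-\hat I\log I)$ — is speculative here: the $\log I$ term presupposes the uniform persistence of $I$ on $\{I_0>0\}$ that is itself one of the things to be proved, and the cancellation of cross terms for the saturated incidence $SI/(m+S+I)$ with a spatially varying weight is not exhibited. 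Two smaller points: in \eqref{th2.5-eq3} the equality case $N=|\Omega|m\gamma/(\beta-\gamma)$ only gives $\partial_tI\le-\gamma I^2/(m+S+I)$, hence algebraic (not exponential) decay; and your barrier argument for \eqref{th2.5-eq2} is fine and matches the paper's ODE comparison using the lower bound $\liminf_t\min_{\bar\Omega}S\ge Nl_1$.
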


\begin{rk} If $I_0>0$ on $\bar\Omega$, under the assumption of Theorem \ref{th2.5}(i), it can be proved that $I(t,\cdot)\to \frac{(\beta-\gamma)}{\gamma}\hat{S}-m$ uniformly on $\bar{\Omega}$ as $t\to\infty$. If additionally $\beta,\,\gamma$ are positive constants and $m$ is a nonnegative constant, then in Theorem \ref{th2.5}(i), the condition $\frac{|\Omega|m\gamma}{\beta-\gamma}<\min\Big\{N,N+\int_{\{I_0>0\}}m
-\frac{N}{|\Omega|}\int_{\{I_0>0\}}\frac{\beta-\gamma}{\gamma}\Big\}$
becomes equivalent to $\gamma<\beta<2\gamma$ and $\frac{|\Omega|m\gamma}{\beta-\gamma}<N$, and the condition $\frac{m\gamma}{\beta-\gamma}\Big(|\Omega|+\int_{\{I_0>0\}}\frac{\beta-\gamma}{\gamma}\Big)<N
            $ becomes equivalent to $\gamma<\beta$ and $\frac{|\Omega|m\beta}{\beta-\gamma}<N$.

On the other hand, if $m\equiv0$ and $H^+=\bar{\Omega}$, then \cite[Theorem 4.6(ii)]{salako2024degenerate} implies Theorem \ref{th2.5}(i).

\end{rk}

Theorem \ref{th2.5} examines the long-term behavior of solutions of model \eqref{dI=0-model} under the condition that \(\beta > \gamma\) across the entire habitat \(\bar{\Omega}\). It suggests that, when the infected individuals are restricted, the susceptible population will eventually distribute homogeneously throughout the habitat. The  fate of the infectious disease, however, is influenced by several factors, particularly the total population size and the presence of the saturated incidence effect. In the absence of the saturated incidence effect, the infectious disease is likely to persist across the entire habitat over time. Conversely, when the saturated incidence effect is considered, the outcome depends significantly on the total population size: a large population size tends to support the persistence of the disease, while a smaller population size may lead to the eventual extinction of the disease. 

We next consider the case where the local infection rate is less or equal to the local recovery rate. Our result is stated as follows.

\begin{tm}\label{th2.6}  Fix $d_S>0$.  Let $(S,I)$ be the solution of \eqref{dI=0-model}.  Then we have
    \begin{equation}
\lim_{t\to\infty}I(t,x)=0\ \ \mbox{for any $x\in H^0\cup H^-$.}
    \end{equation}
    Furthermore, the following conclusions hold.
    \begin{itemize}
        \item[\rm(i)] If $H^0\cup H^-=\bar\Omega$, then $\|I(t,\cdot)\|_{\infty}\to 0$ and $\|S(t,\cdot)-\frac{N}{|\Omega|}\|_{\infty}\to 0$ as $t\to\infty$. 

        \item[\rm(ii)] If $H^+\ne\emptyset$, then for every compact subset $K\subset H^+$, there is $N_{*}>0$, depending on $m$ and $K$, but independent of initial data, such that if $N>N_{*}$, 
        \begin{equation}
        \liminf_{t\to\infty}\min_{x\in K}I(t,x)>0.    
        \end{equation} 
        Moreover, $N_{*}=0$ if $m=0$ on $K$.
    \end{itemize}    
\end{tm}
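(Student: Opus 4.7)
The plan is to treat the $I$-equation as a pointwise scalar ODE at each $x\in\bar\Omega$, driven by $S(t,x)$ whose diffusion provides parabolic regularity, and to exploit the uniform bounds of Theorem \ref{th2.4}. Writing
$$\partial_tI(t,x)=I(t,x)\,g(t,x),\qquad g:=\frac{(\beta-\gamma)S-\gamma(m+I)}{m+S+I},$$
on $H^0\cup H^-$ one has $\beta-\gamma\le 0$, hence $g\le 0$ and $t\mapsto I(t,x)$ is non-increasing with a pointwise limit $I_\infty(x)\ge 0$. Using $S(t,x)\ge c_1/2$ and $S+I\le 2c_2$ for large $t$ from Theorem \ref{th2.4}, I would split by subregion to show $I_\infty(x)=0$: on $H^-$, $g$ is bounded above by a strictly negative constant (exponential decay); on $H^0\cap M^+$, $g\le -\gamma(x)m(x)/(m+2c_2)<0$ (exponential decay); on $H^0\cap M^0$, $g=-\gamma I/(S+I)\le -cI$, so $\partial_tI\le -cI^2$ (algebraic decay).

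For (i), $H^0\cup H^-=\bar\Omega$ and the argument above give $I(t,x)\downarrow 0$ pointwise, with $I(t,\cdot)\in C(\bar\Omega)$ from Proposition \ref{theorem_di1} and $t\mapsto I(t,x)$ monotone from the ODE. Dini's theorem then yields $\|I(t,\cdot)\|_\infty\to 0$. Conservation gives $\int_\Omega S(t,\cdot)\to N$, while the $S$-equation reads $\partial_tS-d_S\Delta S=f$ with $\|f(t,\cdot)\|_\infty=O(\|I(t,\cdot)\|_\infty)\to 0$, $\partial_\nu S=0$. Standard parabolic $W^{2,p}$/Schauder estimates plus the $L^\infty$-bound on $S$ make $\{S(\cdot+t_n,\cdot)\}$ precompact in $C_{\mathrm{loc}}(\mathbb{R}\times\bar\Omega)$; any cluster point $U$ solves $\partial_tU=d_S\Delta U$, $\partial_\nu U=0$, with $\int_\Omega U\equiv N$, hence $U\equiv N/|\Omega|$, so $S(t,\cdot)\to N/|\Omega|$ uniformly.

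For (ii), fix compact $K\subset H^+$ and set $\delta_0:=\min_K(\beta-\gamma)>0$, $M_K:=\max_K m$, $\overline{\gamma}:=\max_K\gamma$. On $K$,
$$g\ge\frac{\delta_0 S-\overline{\gamma}(M_K+I)}{m+S+I}.$$
The key reduction is: if $\liminf_{t\to\infty}\min_{\bar\Omega}S(t,x)\ge\mu$ with $\delta_0\mu>\overline{\gamma}M_K$, then for $\eta>0$ small enough, $g\ge c_0>0$ on $K$ whenever $I\le\eta$, and a standard uniform-persistence/log-integral argument using $\log I(s,x)-\log I(t,x)=\int_t^sg\,d\tau$ (against $\|I\|_\infty\le c_2$) forces $\liminf_{t\to\infty}\min_{x\in K}I(t,x)>0$ on the invariant set $\{I_0>0\}\cap K$, which is what makes the claim nonvacuous. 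When $m\equiv 0$ on $K$, only $\mu>0$ is required, and Theorem \ref{th2.4} supplies $\mu=c_1$, yielding $N_*=0$. When $m$ is not identically zero on $K$, the desired $\mu\to\infty$ as $N\to\infty$ would follow from a parabolic Harnack inequality applied to $S$ viewed as a nonnegative solution of the linear equation $\partial_tS-d_S\Delta S+aS=\gamma I$ with coefficient $a=\beta I/(m+S+I)\in[0,\|\beta\|_\infty]$; combined with $\int_\Omega S\ge N-c_2|\Omega|$, this forces $\min_{\bar\Omega}S\gtrsim N$ for $t$ and $N$ large, and $N_*$ is then determined by $\delta_0\mu(N_*)=\overline{\gamma}M_K$.

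The main obstacle is precisely this $N$-linear lower bound on $\min_{\bar\Omega}S$: although the zero-order coefficient $a$ in the $S$-equation is $L^\infty$-bounded by $\|\beta\|_\infty$ independently of $N$, the source $\gamma I$ may be $O(c_2)$ with $c_2$ growing in $N$, so a naive Harnack estimate introduces an $N$-dependent additive error. My plan is to iterate Harnack on short unit-length time windows, or to combine it with Hölder regularity of $S$ and the integral constraint $\int_\Omega S\gtrsim N$, so as to absorb the source into $\min S$ and extract a truly $N$-linear lower bound on $\min_{\bar\Omega}S$. Once this is in hand, the remainder of the proof is routine ODE analysis and standard uniform-persistence theory.
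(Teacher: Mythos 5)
Your treatment of the first assertion and of part (i) is correct and essentially parallel to the paper's: the paper gets the decay on $H^0\cup H^-$ in one stroke from $\partial_tI\le -\gamma I^2/(m+S+I)\le -\gamma I^2/(m+L)$, which yields a uniform $O(1/t)$ bound and makes your case split and the appeal to Dini unnecessary (though both are valid); for (i) the passage to $S\to N/|\Omega|$ by compactness and the limit heat equation is fine. You are also right that the pointwise ODE comparison in (ii) only gives a positive $\liminf$ at points where $I_0>0$, an implicit hypothesis in the statement.

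The genuine gap is in part (ii), and it is exactly where you flag it: the entire argument reduces to showing $\liminf_{t\to\infty}\min_{\bar\Omega}S(t,\cdot)\ge Nl_1$ with $l_1>0$ independent of $N$, and your proposal does not actually establish this. The specific route you sketch is flawed: the integral constraint $\int_\Omega S\ge N-c_2|\Omega|$ is useless here because the eventual sup-bound $c_2$ of Theorem \ref{th2.4} is \emph{not} independent of $N$ (it is produced by an $L^p$-bootstrap starting from $\|S\|_{L^1}+\|I\|_{L^1}=N$, so $c_2$ grows with $N$ and $N-c_2|\Omega|$ may well be negative). The paper closes this gap with two short steps that you are missing. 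First (Lemma \ref{lem4.1}), one works with the ODE for the mass: using $\frac{\beta SI}{m+S+I}\le\|\beta\|_\infty S$ and the conservation law $\int_\Omega I=N-\int_\Omega S$,
$$\frac{d}{dt}\int_\Omega S\ \ge\ \gamma_{\min}\int_\Omega I-\|\beta\|_\infty\int_\Omega S\ =\ \gamma_{\min}N-(\gamma_{\min}+\|\beta\|_\infty)\int_\Omega S,$$
whence $\liminf_{t\to\infty}\int_\Omega S\ge Nl_0$ with $l_0=\gamma_{\min}/(\gamma_{\min}+\|\beta\|_\infty)$ independent of $N$. Second (Proposition \ref{prop3.1}), rather than applying Harnack to the inhomogeneous equation and trying to absorb the source, one simply discards the nonnegative source and absorbs the sink into a zeroth-order term, $\partial_tS\ge d_S\Delta S-\|\beta\|_\infty S$, so that $S(t+1,\cdot)\ge e^{-\|\beta\|_\infty}e^{d_S\Delta}S(t,\cdot)$ and the Harnack inequality for the pure heat semigroup gives $\min_{\bar\Omega}S(t+1,\cdot)\ge \frac{e^{-\|\beta\|_\infty}}{|\Omega|M_1}\int_\Omega S(t,\cdot)\ge \frac{e^{-\|\beta\|_\infty}l_0}{|\Omega|M_1}\,N$ in the limit. (This estimate is in fact already the content of the constant $c_1=Nl_1$ in Theorem \ref{th2.4}, so no new work is needed if one tracks its $N$-dependence.) With it, the paper finishes (ii) by a direct ODE comparison on $K$ with $N_*=\max_{x\in K}\frac{2m(x)\gamma(x)}{(\beta(x)-\gamma(x))l_1}$, which is simpler than your log-integral persistence argument but equivalent in substance. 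Until you replace your $\int_\Omega S\ge N-c_2|\Omega|$ step by an $N$-uniform mass lower bound of this kind, part (ii) of your proof is incomplete.
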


Theorem \ref{th2.6} shows that in areas where the local infection rate is less than or equal to the local recovery rate, the infection will eventually be eradicated. However, the disease cannot be eradicated in high-risk regions,  when the total population number is sufficiently large.

The following result and its corollary, serving as an interesting complement to Theorems \ref{th2.5} and \ref{th2.6}, provide the precise long-term behavior of the two populations in scenarios where a high-risk subregion may exist. Let $R(x)={\beta(x)}/{\gamma(x)}$.

\begin{prop}\label{prop3}  Fix $d_S>0$. Suppose that $1/(\beta-\gamma)\in L^1(H^+\cap \{I_0>0\})$ and $|\Omega|>\int_\Omega (R-1)_+\chi_{\{I_0>0\}}$.  Let  $(S,I)$ be the solution of \eqref{dI=0-model}.   Then we have
 $$
\lim_{t\to\infty} S(t,\cdot)=S^*\ \ \text{and}\ \ \lim_{t\to\infty} I(t,\cdot)=((R-1)S^*-m)_+\chi_{\{I_0>0\}}\ \ \text{uniformly on}\ \bar\Omega,
 $$
 where the constant $S^*>0$ is the unique solution of the following algebraic equation:
\begin{equation}\label{S^*-def}
S^*|\Omega|+\int_\Omega ((R-1)S^*-m)_+\chi_{\{I_0>0\}}=N. 
\end{equation}
\end{prop}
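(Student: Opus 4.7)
The plan is to show $S(t,\cdot)\to S^*$ uniformly on $\bar\Omega$; the convergence of $I$ then follows from the pointwise ODE structure of the $I$-equation, since $I(t,\cdot)\equiv 0$ off $\{I_0>0\}$. The existence and uniqueness of $S^*>0$ solving \eqref{S^*-def} is immediate: the map $\Phi(s):=s|\Omega|+\int_\Omega((R-1)s-m)_+\chi_{\{I_0>0\}}\,dx$ is continuous, strictly increasing on $[0,\infty)$ with slope at least $|\Omega|>0$, and satisfies $\Phi(0)=0$ and $\Phi(s)\to\infty$ as $s\to\infty$, so $\Phi(S^*)=N$ has a unique positive root.

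By Theorem \ref{th2.4} there exist $0<c_1\le c_2$ and $T_0>0$ with $c_1\le S(t,x)\le c_2$ and $0\le I(t,x)\le c_2$ for all $t\ge T_0$ and $x\in\bar\Omega$, so the reaction term in the $S$-equation is uniformly bounded. Standard parabolic H\"older estimates then yield uniform $C^\alpha$ bounds for $S(t,\cdot)$ when $t\ge T_0+1$, so $\{S(t,\cdot)\}$ is precompact in $C(\bar\Omega)$. Along any $t_n\to\infty$ I would extract, via a diagonal argument, a subsequence along which $(S(t_n+\cdot,\cdot),I(t_n+\cdot,\cdot))$ converges uniformly on compact time intervals for $S$ and pointwise almost everywhere for $I$, to a bounded entire solution $(S_\infty,I_\infty)$ of \eqref{dI=0-model} satisfying $\int_\Omega(S_\infty(t,\cdot)+I_\infty(t,\cdot))\,dx\equiv N$ and $I_\infty\equiv 0$ wherever $I_0=0$.

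The core step---and the main obstacle---is to prove that $S_\infty$ is a spatial (hence temporal) constant. A convenient reformulation uses the variable $w:=(R-1)S-m-I$: the reaction factors as $g(S,I)=-I\gamma w/(m+S+I)$, so the system becomes $\partial_tS=d_S\Delta S-I\gamma w/(m+S+I)$ and $\partial_tI=I\gamma w/(m+S+I)$, and the candidate limit corresponds to $S\equiv S^*$ together with $w_+=0$ on $\{I_0>0\}$. I would establish spatial constancy of $S_\infty$ through a Lyapunov-type functional tailored to this reformulation, in which the hypothesis $|\Omega|>\int_\Omega(R-1)_+\chi_{\{I_0>0\}}\,dx$ plays the role of a strict monotonicity/contraction condition ruling out a continuum of equilibria at the given total mass $N$, while the integrability hypothesis $1/(\beta-\gamma)\in L^1(H^+\cap\{I_0>0\})$ controls the $I$-dynamics near the transition set $\{(R-1)S^*=m\}$ and near $\partial H^+$, upgrading pointwise convergence of $I$ to uniform convergence on $\bar\Omega$. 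Once $S_\infty\equiv\bar S$ is constant, the $I_\infty$-equation at each $x\in\{I_0>0\}$ is an autonomous scalar ODE with a unique globally attracting nonnegative equilibrium $((R(x)-1)\bar S-m(x))_+$; passing to the limit in the mass identity yields $\Phi(\bar S)=N$, hence $\bar S=S^*$ by uniqueness. Since every $\omega$-limit is therefore the singleton $\{(S^*,((R-1)S^*-m)_+\chi_{\{I_0>0\}})\}$, the full convergence claimed in the proposition follows.
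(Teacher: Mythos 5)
Your outline has the right overall architecture (homogenize $S$ in space, treat the $I$-equation as a pointwise ODE, identify the constant from the mass balance), but the two steps you yourself flag as ``the core step'' and ``the main obstacle'' are precisely the ones you do not supply, and your fallback plan for them would not go through as written. The spatial constancy of the limit of $S$ is not something to be ``established through a Lyapunov-type functional tailored to this reformulation''---it \emph{is} the proof, and the paper constructs it explicitly (Lemma \ref{lemma_S}): $V(t)=\tfrac12\int_\Omega S^2+\int_{H^+\cap\{I_0>0\}}\frac{\gamma(I+m)^2}{2(\beta-\gamma)}$, which is finite precisely because $1/(\beta-\gamma)\in L^1(H^+\cap\{I_0>0\})$ (so that hypothesis is used to make the energy well defined, not to ``control the $I$-dynamics near the transition set'' as you guessed). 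Its derivative equals $-d_S\int_\Omega|\nabla S|^2$ minus a nonnegative term on $H^+\cap\{I_0>0\}$ plus a remainder supported on $H^-\cup H^0\cup\{I_0=0\}$; the nontrivial observation is that this remainder is time-integrable because there $\bigl(\tfrac{\beta S}{S+I+m}-\gamma\bigr)I\le 0$ and its time integral telescopes to $\int(I-I_0)$, which is bounded. This gives $\int_0^\infty\int_\Omega|\nabla S|^2<\infty$, hence via H\"older continuity of $t\mapsto\|\nabla S(t,\cdot)\|_{L^2}$, Lemma \ref{lem0} and Poincar\'e, $\|S(t,\cdot)-\tfrac1{|\Omega|}\int_\Omega S(t,\cdot)\|_\infty\to0$. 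Without this (or an equivalent) computation there is no proof.

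The entire-solution limiting argument you propose afterwards also has genuine gaps. There is no compactness in $x$ for $I$ (it solves a family of ODEs with no spatial regularization), so extracting an a.e.-convergent subsequence $I(t_n+\cdot,\cdot)\to I_\infty$ is not justified; and even granting a limit with $S_\infty\equiv\bar S$ constant, a bounded entire solution of the autonomous scalar ODE $\dot I=\bigl(\tfrac{\beta\bar S}{m+\bar S+I}-\gamma\bigr)I$ need not be the equilibrium $((R-1)\bar S-m)_+$---heteroclinic orbits between $0$ and the positive equilibrium are bounded entire solutions---so ``passing to the limit in the mass identity'' does not pin down $\bar S$. The paper avoids all of this: it sets $\zeta=\liminf_t\tfrac1{|\Omega|}\int_\Omega S$ and $\eta=\limsup_t\tfrac1{|\Omega|}\int_\Omega S$, uses the homogenization of $S$ and ODE comparison to squeeze $((R-1)\zeta-m)_+\chi_{\{I_0>0\}}\le\liminf_t I\le\limsup_t I\le((R-1)\eta-m)_+\chi_{\{I_0>0\}}$ pointwise, and then combines the conservation law with Fatou's lemma to obtain $f(\eta)\le f(\zeta)$ for $f(\tau)=|\Omega|\tau-\int_\Omega((R-1)\tau-m)_+\chi_{\{I_0>0\}}$. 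The hypothesis $|\Omega|>\int_\Omega(R-1)_+\chi_{\{I_0>0\}}$ is used exactly here, to make $f$ strictly increasing and force $\zeta=\eta=S^*$; note that your $\Phi$ in \eqref{S^*-def} is strictly increasing with no hypothesis at all, so your uniqueness remark cannot be where that assumption does its work.
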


\begin{coro}\label{coro2.3} Fix $d_S>0$. Suppose that $1/(\beta-\gamma)\in L^1(H^+\cap \{I_0>0\})$ and $|\Omega|>\int_\Omega (R-1)_+\chi_{\{I_0>0\}}$.  Let  $(S,I)$ be the solution of \eqref{dI=0-model}. The following assertions hold.
   \begin{enumerate}
       \item[\rm(i)] If there exists $x\in H^+\cap\{I_0>0\}$ such that $(R(x)-1)N/|\Omega|-m(x)>0$, then 
       $$
\lim_{t\to\infty} S(t,\cdot)=S^*\ \ \text{and}\ \ \lim_{t\to\infty} I(t,\cdot)=((R-1)S^*-m)_+\chi_{\{I_0>0\}}\not\equiv 0\ \ \text{uniformly on}\ \bar\Omega,
 $$
where $S^*\in (0, N/|\Omega|)$ is determined by \eqref{S^*-def}.

  \item[\rm(ii)] If $(R-1)N/|\Omega|-m\le 0$ on $H^+\cap\{I_0>0\}$, then 
  $$
\lim_{t\to\infty} S(t,\cdot)=\frac{N}{|\Omega|}\ \ \text{and}\ \ \lim_{t\to\infty} I(t,\cdot)= 0\ \ \text{uniformly on}\ \bar\Omega.
 $$
   \end{enumerate}
\end{coro}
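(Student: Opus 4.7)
The plan is to derive Corollary \ref{coro2.3} directly from Proposition \ref{prop3} by a careful study of the scalar equation \eqref{S^*-def}. Define
$$
F(s):=s|\Omega|+\int_{\Omega}\bigl((R-1)s-m\bigr)_+\chi_{\{I_0>0\}}\,dx,\qquad s\ge 0.
$$
The integrand is continuous and nondecreasing in $s$ for each $x$, so $F:[0,\infty)\to[0,\infty)$ is continuous and strictly increasing (the term $s|\Omega|$ is), with $F(0)=0$ (since $m\ge 0$) and $F(s)\to\infty$. Hence $F(s)=N$ has a unique positive solution $S^*$, which is the $S^*$ appearing in \eqref{S^*-def}. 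The hypotheses $1/(\beta-\gamma)\in L^1(H^+\cap\{I_0>0\})$ and $|\Omega|>\int_\Omega(R-1)_+\chi_{\{I_0>0\}}$ of Proposition \ref{prop3} are part of the assumptions of the corollary, so Proposition \ref{prop3} applies and yields $S(t,\cdot)\to S^*$ and $I(t,\cdot)\to ((R-1)S^*-m)_+\chi_{\{I_0>0\}}$ uniformly on $\bar\Omega$. It therefore only remains to locate $S^*$ relative to $N/|\Omega|$ in the two regimes.

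For part (i), I would evaluate $F$ at $s=N/|\Omega|$:
$$
F(N/|\Omega|)=N+\int_{\Omega}\bigl((R-1)N/|\Omega|-m\bigr)_+\chi_{\{I_0>0\}}\,dx.
$$
Since $\beta,\gamma,m$ are continuous and $I_0\in C^+(\bar\Omega)$, the set $H^+\cap\{I_0>0\}$ is open, and by hypothesis the continuous function $(R-1)N/|\Omega|-m$ is strictly positive at some point of this open set. Consequently, it is positive on a subset of positive Lebesgue measure, which forces the integral above to be strictly positive, so $F(N/|\Omega|)>N=F(S^*)$. By strict monotonicity of $F$ this gives $0<S^*<N/|\Omega|$. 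Feeding this back into \eqref{S^*-def} yields $\int_\Omega((R-1)S^*-m)_+\chi_{\{I_0>0\}}\,dx=N-S^*|\Omega|>0$, so the limiting profile of $I$ is not identically zero, which completes part (i).

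For part (ii), the pointwise inequality $(R-1)N/|\Omega|-m\le 0$ is assumed on $H^+\cap\{I_0>0\}$; on $(H^0\cup H^-)\cap\{I_0>0\}$ we have $R\le 1$ and $m\ge 0$, so $(R-1)N/|\Omega|-m\le -m\le 0$. Thus $(R-1)N/|\Omega|-m\le 0$ everywhere on $\{I_0>0\}$, which gives $F(N/|\Omega|)=N$. By uniqueness, $S^*=N/|\Omega|$, and then \eqref{S^*-def} forces the limiting profile $((R-1)S^*-m)_+\chi_{\{I_0>0\}}$ to vanish identically, so $I(t,\cdot)\to 0$ uniformly on $\bar\Omega$, as required.

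The entire argument is essentially an exercise in convex-analysis-style bookkeeping on the monotone function $F$, so there is no serious analytic obstacle beyond Proposition \ref{prop3} itself; the only subtlety worth handling carefully is the passage from pointwise positivity of $(R-1)N/|\Omega|-m$ at a single point to positivity on a set of positive measure, which is why I emphasize the openness of $H^+\cap\{I_0>0\}$ and the continuity of all coefficients.
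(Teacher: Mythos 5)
Your proposal is correct and follows essentially the same route as the paper: the paper's own proof simply observes that the unique root $S^*\in(0,N/|\Omega|]$ of \eqref{S^*-def} equals $N/|\Omega|$ if and only if $(R-1)N/|\Omega|-m\le 0$ on $H^+\cap\{I_0>0\}$, and then invokes Proposition \ref{prop3}. You have merely filled in the monotonicity of $F$ and the continuity/positive-measure argument that the paper leaves as ``easy to see,'' so there is nothing to correct.
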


\begin{rk}\label{re3} We suspect that the conditions $1/(\beta-\gamma)\in L^1(H^+\cap \{I_0>0\})$ and $|\Omega|>\int_\Omega (R-1)_+\chi_{\{I_0>0\}}$ in Proposition \ref{prop3} and Corollary \ref{coro2.3} are merely technical.
\end{rk}

By Theorem \ref{th2.6}(ii), when the effect of saturated incidence is negligible in high-risk areas (i.e., \( m = 0 \) on \( H^+ \)), the infectious disease will ultimately confine itself to these high-risk areas, regardless of the total population size. However, as stated in Proposition \ref{prop3} and Corollary \ref{coro2.3}, if the saturated incidence effect is present in high-risk areas, the total population size plays a pivotal role in determining the disease's eventual distribution. Specifically, while the disease may persist only in certain high-risk locations, the area it can ultimately occupy expands as the total population size increases, since \( S^* \) increases strictly with \( N \).


\section{Proof of main results for system \eqref{ds=0-model}}
In this section, we prove the main results for system \eqref{ds=0-model}  stated in Subsection 2.1.

\begin{proof}[Proof of Proposition \ref{theorem_ds1}]
 The local existence of nonnegative solutions of  \eqref{ds=0-model} with specified regularity follows from a standard semigroup method (see, for example, \cite[Proposition 3.1]{salako2024degenerate}).Using $SI/(m+S+I)\le I$, we have
 \begin{equation}\label{ds_com}
\begin{cases}
\displaystyle
    \p_t S \le \gamma(x) I , & x \in \Omega,\, t > 0, \\
\displaystyle
    \p_t I \le d_I \Delta I - \gamma(x) I + \beta(x) I, & x \in \Omega,\, t > 0, \\
\displaystyle
    \p_{\nu} I = 0, & x \in \partial \Omega,\, t > 0.
\end{cases}
\end{equation}
Since the right-hand side of \eqref{ds_com} is linear, by the comparison principle for parabolic equations, the solution of \eqref{ds=0-model} does not blow up at finite time. So it exists globally. The positivity of the solution when $I_0\not\equiv 0$ follows from the strong maximum principle. 
\end{proof}

 The following lemma is well-known, and its proof can be found, for example, in \cite[Lemma 2.2]{salako2024degenerate}.
\begin{lem}\label{lem0} Suppose that $\phi$ is a nonnegative and H\"{o}lder continuous function on $[0,\infty)$ and $
\int_0^\infty\phi(t)dt<\infty$. Then $\phi(t)\to0$ as $t\to\infty$.
\end{lem}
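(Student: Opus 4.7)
The plan is to argue by contradiction. Suppose $\phi(t)\not\to 0$ as $t\to\infty$. Then there exist $\varepsilon>0$ and a sequence $t_k\to\infty$ such that $\phi(t_k)\ge\varepsilon$ for every $k$. The goal is to show that this forces $\int_0^\infty\phi(t)\,dt=\infty$, contradicting the integrability hypothesis.

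The first step is to exploit the H\"older continuity of $\phi$: fix constants $C>0$ and $\alpha\in(0,1]$ with $|\phi(t)-\phi(s)|\le C|t-s|^{\alpha}$ for all $t,s\ge 0$. Set
\[
\delta:=\left(\frac{\varepsilon}{2C}\right)^{1/\alpha}.
\]
Then for every $t$ with $|t-t_k|\le\delta$ I have
\[
\phi(t)\ge\phi(t_k)-C|t-t_k|^{\alpha}\ge\varepsilon-C\delta^{\alpha}=\frac{\varepsilon}{2}.
\]
Thus on each interval $J_k:=[t_k-\delta,t_k+\delta]\cap[0,\infty)$ the function $\phi$ is bounded below by $\varepsilon/2$, and since $t_k\to\infty$ each $J_k$ has length at least $\delta$ for all large $k$.

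The second step is to thin out the sequence $(t_k)$ to make the intervals $J_k$ pairwise disjoint. Choosing $t_{k_1}<t_{k_2}<\cdots$ with $t_{k_{j+1}}\ge t_{k_j}+2\delta$ (possible because $t_k\to\infty$), the intervals $J_{k_j}$ are disjoint and each has length at least $\delta$. Therefore
\[
\int_0^\infty\phi(t)\,dt\;\ge\;\sum_{j=1}^{\infty}\int_{J_{k_j}}\phi(t)\,dt\;\ge\;\sum_{j=1}^{\infty}\frac{\varepsilon}{2}\cdot\delta\;=\;\infty,
\]
contradicting $\int_0^\infty\phi(t)\,dt<\infty$. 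Hence $\phi(t)\to 0$ as $t\to\infty$.

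There is no real obstacle in this argument; the only subtlety is making sure $\delta$ is chosen independently of $k$, which is exactly what the uniform H\"older estimate provides. The same proof goes through if one only assumes $\phi$ is uniformly continuous on $[0,\infty)$, but the H\"older hypothesis stated in the lemma gives a fully quantitative choice of $\delta$.
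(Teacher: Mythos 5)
Your argument is correct and is the standard proof of this Barb\u{a}lat-type lemma; the paper itself does not supply a proof but simply cites \cite[Lemma 2.2]{salako2024degenerate}, where essentially the same contradiction argument (fixed-width intervals on which $\phi\ge\varepsilon/2$ forcing divergence of the integral) is used. The only point worth making explicit is that you need the H\"older estimate to be uniform on $[0,\infty)$ (a single pair $C,\alpha$ valid for all $t,s$) so that $\delta$ is independent of $k$ --- this is indeed how the hypothesis is meant and how it is verified in the paper's applications, e.g.\ via \eqref{I-Holder-estimate}.
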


The following Hanack type inequality can be found in  \cite{huska2006harnack}:
\begin{lem}\label{lemma_harnack}
Let $u$ be a  nonnegative solution of the following problem on $\Omega\times (0, T)$:
\begin{equation*}\label{model-aux}
\begin{cases}
\displaystyle\partial_t u=\Delta u+ a(x, t) u,&x\in\Omega,\, t>0,\\
\partial_\nu u=0, &x\in\partial\Omega,\,t>0,
\end{cases}
\end{equation*}
where $a\in L^\infty(\Omega\times (0, \infty))$. Then for any $0<\delta<T$, there exists $C>0$ depending on $\delta$ and $\|a\|_{L^\infty(\Omega\times (0, \infty))}$ such that 
$$
\sup_{x\in\Omega} u(x, t)\le C\inf_{x\in\Omega} u(x, t), \ \ \text{for all }\ t\in [\delta, T).
$$
\end{lem}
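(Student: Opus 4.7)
The plan is to reduce to the classical interior parabolic Harnack inequality (in the Moser / Krylov--Safonov form) via a boundary reflection argument, chain the local estimates to obtain a different-time global Harnack, and finally upgrade to the claimed same-time inequality by comparing $u$ with suitable exponential rescalings. First I would localize and reflect at the boundary: for each $x_0\in\partial\Omega$, straighten $\partial\Omega$ via a local $C^2$ diffeomorphism and extend $u$ by even reflection across the straightened piece. Because $\partial_\nu u=0$, the reflected function is a solution of a uniformly parabolic equation with bounded coefficients on an open neighborhood of $x_0$, so the classical interior Harnack inequality applies. Combined with the interior Harnack on balls contained in $\Omega$, this gives a uniform local Harnack constant $C_0=C_0(\Omega,\|a\|_\infty,r_0)$ on space--time cylinders of fixed radius $r_0$. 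A compactness-plus-chain argument along overlapping covers of $\bar\Omega$ then produces a different-time global inequality
\[
\sup_\Omega u(\cdot,t_1)\le C_1 \inf_\Omega u(\cdot,t_2),\qquad 0<t_1<t_2<T,
\]
whenever $t_2-t_1$ equals some fixed $\tau>0$ (chosen $\le\delta/3$ by taking $r_0$ small) and $t_1\ge\delta/3$, with $C_1$ depending only on $\Omega$, $\|a\|_\infty$, and $\delta$.

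To upgrade this to a same-time inequality, I would use the substitutions $v_1=e^{-\|a\|_\infty t}u$ and $v_2=e^{\|a\|_\infty t}u$. A direct computation gives $\partial_t v_1\le \Delta v_1$ and $\partial_t v_2\ge \Delta v_2$ on $\Omega$, both with the homogeneous Neumann boundary condition. The maximum and minimum principles then yield the temporal comparisons
\[
\max_\Omega u(\cdot,t)\le e^{\|a\|_\infty(t-t_1)}\max_\Omega u(\cdot,t_1),\qquad \min_\Omega u(\cdot,t_2)\le e^{\|a\|_\infty(t-t_2)}\min_\Omega u(\cdot,t),
\]
valid for $t\ge t_2>t_1$. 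Taking $t_1=t-2\tau$, $t_2=t-\tau$ with $t\ge\delta$ (so $t_1\ge\delta-2\tau\ge\delta/3$ thanks to $\tau\le\delta/3$), and concatenating with the different-time inequality from the previous step, produces
\[
\max_\Omega u(\cdot,t)\le e^{3\|a\|_\infty \tau}C_1 \min_\Omega u(\cdot,t),
\]
which is the desired bound, with $C=e^{3\|a\|_\infty \tau}C_1$ depending only on $\Omega$, $\|a\|_\infty$, and $\delta$.

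The main technical obstacle is the chain-of-balls argument together with the boundary reflection: one must bookkeep carefully so that the final constant depends only on $\Omega$, $\|a\|_\infty$, and $\delta$, and not on the particular pair of points $(x,y)$ or on $T$. If $u\equiv 0$ the claim is trivial; otherwise the strong maximum principle (with Hopf's boundary lemma) forces $u>0$ strictly on $\bar\Omega\times(0,T)$, so all the estimates above are meaningful, and $\sup_\Omega u(\cdot,t)$ and $\inf_\Omega u(\cdot,t)$ coincide with $\max$ and $\min$ by continuity. A ready-made version of this Harnack estimate, valid even in the Lipschitz-domain setting, is established in \cite{huska2006harnack}, which is the reference invoked here.
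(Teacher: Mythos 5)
The paper does not actually prove this lemma: it is quoted as a known result with a pointer to \cite{huska2006harnack}, so there is no internal argument to compare against. Your proposal supplies a self-contained (if schematic) proof along the standard lines, and the one step you carry out in detail --- upgrading the time-lagged global Harnack inequality to the same-time inequality via the tilted functions $v_1=e^{-\|a\|_\infty t}u$ and $v_2=e^{\|a\|_\infty t}u$ --- is correct: $v_1$ is a subsolution and $v_2$ a supersolution of the Neumann heat equation, the max/min principles give exactly the two temporal comparisons you state, and the concatenation with $t_1=t-2\tau$, $t_2=t-\tau$ yields $\max_\Omega u(\cdot,t)\le e^{3\|a\|_\infty\tau}C_1\min_\Omega u(\cdot,t)$ as claimed. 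The earlier stages (even reflection after boundary straightening, giving a uniformly parabolic equation with bounded measurable coefficients to which Krylov--Safonov/Moser applies, followed by a chain-of-cylinders argument) are asserted rather than executed, but they are standard; the one quantitative point worth making explicit is why the waiting time $\tau$ can be taken $\le\delta/3$: shrinking the ball radius $r_0$ lengthens the chain like $r_0^{-1}$ while each step waits only $\sim r_0^2$, so the total lag is $O(r_0)$ and can be made as small as desired at the price of a larger (but still $\delta$- and $\|a\|_\infty$-controlled) constant $C_1$. Since the final constant only involves looking backward in time by a fixed amount from $t\ge\delta$, it is indeed independent of $T$, consistent with the statement. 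In short: your route is a legitimate proof sketch of a fact the paper simply imports from Húska's work, and the only parts that would need genuine elaboration in a full write-up are the reflection and chaining bookkeeping, not the logic.
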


The following result follows readily from Lemma \ref{Harnak-inequality}.
\begin{lem}\label{lem1} Let $(S,I)$ be the solution of \eqref{ds=0-model}. Then there exist constants $M=(d_I,N)>0$ and $c_1=c_1(d_I,N)>0$ such that 
\begin{equation}\label{Harnak-inequality}
    \|I(t,\cdot)\|_{\infty}\le c_1\min_{x\in\bar{\Omega}}I(t,x),\quad \forall\, t\ge 1
\end{equation}
    and \begin{equation}\label{L-infty-bound-for-I}
    \|I(t,\cdot)\|_{\infty}\le M,\quad \forall \, t\ge 1.
\end{equation}
\end{lem}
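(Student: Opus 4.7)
The plan is to view the $I$-equation in \eqref{ds=0-model} as a linear parabolic equation with a uniformly bounded zeroth-order coefficient, apply the parabolic Harnack inequality of Lemma \ref{lemma_harnack} on time windows of unit length to establish the pointwise comparison \eqref{Harnak-inequality}, and then combine this comparison with the conservation identity derived from \eqref{sum} to extract the $L^\infty$ bound \eqref{L-infty-bound-for-I}.

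Concretely, I would rewrite the $I$-equation as $\partial_t I=d_I\Delta I+a(t,x)I$ with
$$a(t,x) := -\gamma(x) + \frac{\beta(x) S(t,x)}{m(x)+S(t,x)+I(t,x)}.$$
By Proposition \ref{theorem_ds1}, $I(t,x)>0$ for all $t>0$ and $x\in\bar\Omega$, so the denominator is strictly positive and $0\le \beta S/(m+S+I)\le \beta$ on $(0,\infty)\times\bar\Omega$. Consequently $\|a(t,\cdot)\|_{\infty}\le \|\beta\|_\infty+\|\gamma\|_\infty$ uniformly in $t\ge 0$. After the standard time rescaling $\tau=d_I t$, the equation becomes $\partial_\tau I=\Delta I+(a/d_I)I$ with a coefficient whose $L^\infty$ norm is bounded by $(\|\beta\|_\infty+\|\gamma\|_\infty)/d_I$. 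For each $t\ge 1$, applying Lemma \ref{lemma_harnack} on the shifted window $[t-1,t+1]$ (equivalently, on the rescaled window of length $2d_I$ with $\delta=d_I$) produces a constant $c_1=c_1(d_I)>0$, independent of $t$, such that
$$\sup_{x\in\bar\Omega}I(t,x)\le c_1\inf_{x\in\bar\Omega}I(t,x),\quad \forall\, t\ge 1,$$
which is exactly \eqref{Harnak-inequality}.

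For \eqref{L-infty-bound-for-I}, I would add the two equations of \eqref{ds=0-model}, integrate over $\Omega$, and invoke the Neumann condition on $I$ to conclude that $t\mapsto \int_\Omega(S+I)(t,\cdot)\,dx$ is constant, hence identically equal to $N$. Since $S,I\ge 0$, this gives $\int_\Omega I(t,\cdot)\,dx\le N$, and therefore
$$\min_{x\in\bar\Omega}I(t,x)\le \frac{1}{|\Omega|}\int_\Omega I(t,\cdot)\,dx\le \frac{N}{|\Omega|}.$$
Substituting this into the Harnack estimate yields $\|I(t,\cdot)\|_\infty\le c_1 N/|\Omega|=:M$, which has the announced form $M(d_I,N)$.

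No step presents a genuine obstacle; the only mild points of care are verifying that $a(t,x)$ is a well-defined bounded function on $[1,\infty)\times\bar\Omega$ (which is immediate from Proposition \ref{theorem_ds1}) and tracking how the diffusion constant $d_I$ enters the Harnack constant via the time rescaling, which is standard. This is why the authors describe the result as an immediate consequence of the Harnack inequality.
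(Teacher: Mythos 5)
Your proposal is correct and follows essentially the same route as the paper: bound the zeroth-order coefficient $\beta S/(m+S+I)-\gamma$ by $\|\beta\|_\infty+\|\gamma\|_\infty$, invoke the Harnack inequality of Lemma \ref{lemma_harnack}, and then combine \eqref{Harnak-inequality} with the conservation identity $\int_\Omega(S+I)=N$ to get $\|I(t,\cdot)\|_\infty\le c_1N/|\Omega|$. The only difference is that you spell out the time rescaling and windowing needed to accommodate $d_I$, details the paper leaves implicit.
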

\begin{proof} Since
\begin{equation}\label{Total-size identity}
\int_{\Omega}(S(t,\cdot)+I(t,\cdot))=N,\quad \forall \, t\ge 0
\end{equation} 
and
\begin{equation}\label{Eqa-12}
\Big\|\frac{\beta S(t,x)}{m+S(t,x)+I(t,x)}-\gamma\Big\|_{\infty}\le \|\beta\|_{\infty}+\|\gamma\|_{\infty}, \quad \forall\, t\ge 0,
\end{equation}
one can use Lemma \ref{Harnak-inequality} to deduce \eqref{Harnak-inequality}. In addition, \eqref{L-infty-bound-for-I} is a direct consequence of \eqref{Harnak-inequality} and  \eqref{Total-size identity}.
\end{proof}

To apply Lemma \ref{lem0}, we need the following result.

\begin{lem}\label{lem2} Suppose that $\beta>\gamma$ on $\bar\Omega$. Let $(S,I)$ be the  solution of \eqref{ds=0-model}. Define 
\begin{equation}\label{U-def}
    U(t,x)=\frac{\beta(x)-\gamma(x)}{\gamma(x)}S(t,x)-m(x), \quad \forall\, t\ge 0,\, x\in\bar\Omega,
\end{equation}
and 
\begin{equation}\label{F-defi}
    F(t)=\int_{\Omega}\frac{[U(t,x)-I(t,x)]^2I(t,x)}{m(x)+S(t,x)+I(t,x)}, \quad \forall\, t\ge 0.
\end{equation}
Then we have
\begin{align}\label{Eqa-4}
     S(t,x)= &\frac{\gamma(x)}{\beta(x)-\gamma(x)}(U(t,x)+m(x))\cr
    \le&\Big\|\frac{\gamma}{\beta-\gamma}\Big\|_{\infty}(\|m\|_{\infty}+\max\{M,\|U(1,\cdot)\|_{\infty}\})
    =:\bar{S}^*,\ \ \forall\, t\ge 1,\,x\in\bar\Omega,
\end{align}
where $M$ is given by \eqref{L-infty-bound-for-I}, and the mapping $F$ is H\"older continuous on $[1,\infty)$.
\end{lem}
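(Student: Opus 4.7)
The identity $S = \gamma(U+m)/(\beta-\gamma)$ is immediate from the definition of $U$. For the upper bound on $S$, I first derive a pointwise-in-$x$ ODE for $U$ by differentiating $U = \frac{\beta-\gamma}{\gamma}S - m$ in time and substituting the $S$-equation of \eqref{ds=0-model}. Using the identity $\gamma(m+S+I) - \beta S = \gamma(I-U)$ (which follows from $(\beta-\gamma)S = \gamma(U+m)$), one obtains
\begin{equation*}
\partial_t U = \frac{(\beta-\gamma)\, I\,(I-U)}{m+S+I}.
\end{equation*}
For fixed $x \in \bar\Omega$, this is a linear ODE $\partial_t U = a(t)(I(t,x) - U)$ with nonnegative coefficient $a := (\beta-\gamma)I/(m+S+I)$. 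Its explicit solution on $[1,t]$ gives
$U(t,x) = e^{-A(t)}U(1,x) + \int_1^t a(s) e^{-(A(t)-A(s))}I(s,x)\,ds$
with $A(t) := \int_1^t a(\tau)\,d\tau$; since $\int_1^t a(s)e^{-(A(t)-A(s))}\,ds = 1 - e^{-A(t)}$, this is a genuine convex combination of $U(1,x)$ and the values $\{I(s,x):1\le s\le t\}$. Because $I(s,\cdot) \le M$ for $s \ge 1$ by Lemma \ref{lem1}, it follows that $U(t,x) \le \max\{M, \|U(1,\cdot)\|_\infty\}$ for all $t \ge 1$ and $x \in \bar\Omega$, which combined with $S = \gamma(U+m)/(\beta-\gamma)$ yields \eqref{Eqa-4}.

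For the Hölder continuity of $F$, the crucial reformulation is
\begin{equation*}
F(t) = \int_\Omega \frac{(I-U)\,\partial_t S}{\gamma}\, dx,
\end{equation*}
obtained by observing that $\partial_t S = \gamma I(I-U)/(m+S+I)$, so one factor $(U-I)$ in the integrand of \eqref{F-defi} combines with $I/(m+S+I)$ to produce $-\partial_t S/\gamma$. The singular denominator disappears, reducing the task to showing that $I-U$ and $\partial_t S$ are each Lipschitz in $t$ uniformly in $x$ on $[1,\infty)$; their product is then Lipschitz in $t$ uniformly in $x$, and integration over the bounded domain $\Omega$ gives Lipschitz, hence Hölder, continuity of $F$.

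Lipschitz regularity of $I$ in $t$ (uniformly in $x$) comes from parabolic regularity: the right-hand side $I(-\gamma + \beta S/(m+S+I))$ of the $I$-equation is uniformly bounded in $L^\infty(\Omega)$ for $t \ge 1$ by Lemma \ref{lem1} and boundedness of $\beta,\gamma$; combined with the $W^{2,p}$ regularity from Proposition \ref{theorem_ds1}, a standard Schauder / maximal-regularity bootstrap yields a uniform-in-$t$ bound on $\|\partial_t I(t,\cdot)\|_\infty$. Regularity of $S$ (and hence $U$) is simpler: $|\partial_t S| \le (\beta+\gamma)I$ is uniformly bounded. For $\partial_t S$ itself, write $\partial_t S = \gamma I - \beta T(S,I)$ with $T(s,i) := si/(m+s+i)$; the elementary computations $|\partial_s T| = i(m+i)/(m+s+i)^2 \le 1$ and $|\partial_i T| = s(m+s)/(m+s+i)^2 \le 1$ show that $T$ is globally $1$-Lipschitz in $(s,i)$, so the Lipschitz regularity of $(S,I)$ in $t$ transfers to $\partial_t S$ uniformly in $x$, completing the argument.

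The main technical obstacle is the uniform-in-$t$ $L^\infty$ bound on $\partial_t I$; the $L^p$-version is immediate from parabolic theory, but the upgrade to $L^\infty$ is the standard bootstrap step for parabolic equations with bounded right-hand side and should go through under the paper's continuity hypotheses on $\beta,\gamma,m$.
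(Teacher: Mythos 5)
Your proof is correct in substance and, for the H\"older continuity of $F$, takes a genuinely different and cleaner route than the paper. For the bound \eqref{Eqa-4} you and the paper do the same thing: derive $\partial_t U=(\beta-\gamma)(I-U)I/(m+S+I)$ and conclude $U(t,\cdot)\le\max\{M,\|U(1,\cdot)\|_\infty\}$; the paper simply cites the ODE comparison principle where you write out the explicit convex-combination formula (your use of $\beta>\gamma$ to make the coefficient nonnegative is exactly where the paper uses it too). The real divergence is in the second half. The paper works directly with the integrand of \eqref{F-defi}, expands $|F(t+\tau)-F(t)|$ into several pieces, and controls the denominator $m+S+I$ by combining the Harnack inequality \eqref{Harnak-inequality} with the pointwise decay bounds $I(t+\tau,x)\ge e^{-(\|\beta\|_\infty+\|\gamma\|_\infty)\tau}\min I(t,\cdot)$ and $S(t+\tau,x)\ge e^{-\|\beta\|_\infty\tau}S(t,x)$ — a rather delicate bookkeeping exercise. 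Your identity $F(t)=\int_\Omega\gamma^{-1}(I-U)\,\partial_tS$ (which is correct: $\gamma(m+S+I)-\beta S=\gamma(I-U)$ gives $\partial_tS=\gamma I(I-U)/(m+S+I)$, so $(I-U)\partial_tS/\gamma=(U-I)^2I/(m+S+I)$) removes the denominator entirely, and the $1$-Lipschitz bound on $T(s,i)=si/(m+s+i)$ then reduces everything to time-regularity of $S$ and $I$. This is shorter, avoids the Harnack machinery in this step, and is arguably the "right" way to see why $F$ inherits the regularity of the solution.

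One caveat: you over-ask at the end. A uniform-in-$t$ bound on $\|\partial_tI(t,\cdot)\|_\infty$ is not needed and is in fact doubtful here: $S$ solves a pointwise ODE with initial data that is merely continuous in $x$, so the coefficient $\beta S/(m+S+I)-\gamma$ in the $I$-equation has no spatial H\"older regularity and Schauder estimates do not apply; the "standard bootstrap" you invoke does not go through. But you do not need Lipschitz continuity of $I$ in $t$ — the lemma only claims $F$ is H\"older continuous, and the standard $L^p$--Sobolev estimate already gives $\|I(t+\tau,\cdot)-I(t,\cdot)\|_\infty\le M_1\tau^\theta$ uniformly for $t\ge1$ (this is exactly the paper's \eqref{I-Holder-estimate}). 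Replacing ``Lipschitz'' by ``H\"older'' throughout your last two paragraphs, your argument closes with no gap: $I-U$ and $\partial_tS=\gamma I-\beta T(S,I)$ are then bounded and H\"older in $t$ uniformly in $x$ on $[1,\infty)$, and so is their product, whence $F$ is H\"older on $[1,\infty)$.
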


\begin{proof} Observe that 
\begin{equation}\label{Eqa-3}
\p_tU=\frac{\beta-\gamma}{\gamma}\p_tS=(\beta-\gamma)(I-U)\frac{I}{m+S+I},\quad\,  t\ge0,\, x\in\bar\Omega.
\end{equation}
By the comparison principle of ODE, it follows from \eqref{L-infty-bound-for-I}, \eqref{Eqa-3} and $\beta>\gamma$ on $\bar\Omega$ that 
\begin{equation}\label{Eqa-3a}
U(t,x)\le \max\{M,\ \|U(1,\cdot)\|_{\infty}\},\quad \forall\, t\ge 1.
\end{equation}
As a result, 
\begin{equation*}
    S(t,\cdot)= \frac{\gamma}{\beta-\gamma}(U(t,\cdot)+m)\le \bar{S}^*,\quad \forall\, t\ge 1,
\end{equation*}
which yields \eqref{Eqa-4}.

In view of \eqref{L-infty-bound-for-I} and \eqref{Eqa-12}, the standard regularity theory for parabolic equations allows us to conclude that the mapping $[1,\infty)\ni t\mapsto I(t,\cdot)\in C^1(\bar{\Omega})$ is H\"older continuous. Thus, there exist two constants $0<\theta<1$ and $M_1>0$ such that 
\begin{equation}\label{I-Holder-estimate}
    \|I(t+\tau,\cdot)-I(t,\cdot)\|_{C^1(\bar{\Omega})}\le M_1\tau^{\theta},\quad \forall\, t\ge 1, \ \tau>0.
\end{equation}
Observe from the second equation of \eqref{ds=0-model}, \eqref{Eqa-12}, and the comparison principle for parabolic equations that 
$$
\min_{x\in\bar{\Omega}}I(t+\tau,x)\ge e^{-(\|\beta\|_{\infty}+\|\gamma\|_{\infty})\tau}\min_{x\in\bar{\Omega}}I(t,x),\quad \forall\, t>0,\ \tau>0.
$$
This along with \eqref{Harnak-inequality} gives
\begin{equation}\label{Eqb-1}
    I(t,x)\le  c_1e^{(\|\beta\|_{\infty}+\|\gamma\|_{\infty})\tau}I(t+\tau,x),\quad \forall\, t\ge 1, \ \tau>0, \ x\in\bar{\Omega}.
\end{equation}
Note also from the first equation of \eqref{ds=0-model} that 
$$
\p_tS(t,x)\ge -\|\beta\|_{\infty}S(t,x),\quad \forall\, t>0,\,x\in\bar\Omega.
$$
Hence,  we obtain
\begin{equation}\label{Eqb-5}
S(t+\tau,x)\ge e^{-\|\beta\|_{\infty}\tau}S(t,x),\quad \forall\, t>0,\ \tau>0,\,x\in\bar\Omega.
\end{equation}

For convenience, we will use \( U(t) \), \( S(t) \), and \( I(t) \) to represent \( U(t,x) \), \( S(t,x) \), and \( I(t,x) \), respectively.
Fix $t\ge 1$ and $0<\tau\le 1$. Then,  by \eqref{I-Holder-estimate}, we obtain
\begin{small}
\begin{align}\label{Eqb-6}
   \displaystyle
    &|F(t)-F(t+\tau)|\cr
    \displaystyle
    \le &\int_{\Omega}\Big|\frac{[U(t+\tau)-I(t+\tau)]^2I(t+\tau)}{m+S(t+\tau)+I(t+\tau)}-\frac{[U(t)-I(t)]^2I(t)}{m+S(t)+I(t)}\Big|\cr
    \displaystyle
    \le & \int_{\Omega}\frac{[\frac{(\beta-\gamma)}{\gamma}S(t+\tau)+m+I(t+\tau)]^2|(I(t+\tau)-I(t))|}{m+S(t+\tau)+I(t+\tau)}\cr
    \displaystyle
    &\ \ \ +\int_{\Omega}\Big|\frac{[U(t+\tau)-I(t+\tau)]^2}{m+S(t+\tau)+I(t+\tau)}-\frac{[U(t)-I(t)]^2}{m+S(t)+I(t)}\Big|I(t)\cr
    \displaystyle
    \le & \int_{\Omega}\left\{\left[1+\|\frac{\beta-\gamma}{\gamma}\|_\infty\right]\left[S(t+\tau)+m+I(t+\tau)\right]\right\}|(I(t+\tau)-I(t))|\cr
    \displaystyle
    &\ \ \  +F_{1}(t,\tau)+F_{2}(t,\tau)\cr
    \displaystyle
    \le &M_2\tau^{\theta}+F_{1}(t,\tau)+F_{2}(t,\tau),
\end{align}
\end{small}
where 
 $$
 M_2=\Big(\Big\|\frac{\beta-\gamma}{\gamma}\Big\|_{\infty}+1\Big)\Big(M+\|m\|_{\infty}
    +\bar{S}^*\Big)|\Omega|M_1,
 $$
$$
F_1(t,\tau):=\int_{\Omega}\frac{|(U(t+\tau)-I(t+\tau))^2-(U(t)-I(t))^2|}{m+S(t+\tau)+I(t+\tau)}|I(t)|
$$
and 
$$
F_2(t,\tau):=\int_{\Omega}\Big|\frac{1}{m+S(t+\tau)+I(t+\tau)}-\frac{1}{m+S(t)+I(t)} \Big|(U(t)-I(t))^2I(t).
$$
First, thanks to \eqref{L-infty-bound-for-I} and \eqref{Eqa-3}-\eqref{Eqb-5}, we estimate $F_1(t,\tau)$ as follows:
\begin{align}\label{Eqb-2}
    &\ \  \ F_1(t,\tau) \cr &\le\int_{\Omega}\frac{[\frac{\beta-\gamma}{\gamma}|S(t+\tau)-S(t)|+|I(t+\tau)-I(t)|][|U(t+\tau)|+|U(t)|+I(t+\tau)+I(t)]}{m+S(t+\tau)+I(t+\tau)}I(t)\cr
    &\le M\left [3+c_1e^{(\|\beta\|_{\infty}+\|\gamma\|_{\infty})\tau}
    +\Big\|\frac{\beta-\gamma}{\gamma}\Big\|_{\infty}(1+e^{\|\beta\|_{\infty}\tau})\right]\int_\Omega\Big[\frac{\beta-\gamma}{\gamma}|S(t+\tau)-S(t)|
    +M_1\tau^{\theta}\Big]\cr
    &\le M|\Omega|\left[3+c_1e^{(\|\beta\|_{\infty}+\|\gamma\|_{\infty})\tau}
    +\Big\|\frac{\beta-\gamma}{\gamma}\Big\|_{\infty}(e^{\|\beta\|_{\infty}\tau}+1)\right]\Big(\int_{t}^{t+\tau}\|\p_tU(s)\|_{\infty}ds
    +M_1\tau^{\theta}\Big)\cr
  &\le M|\Omega|\left[3+c_1e^{(\|\beta\|_{\infty}+\|\gamma\|_{\infty})\tau}
  +\Big\|\frac{\beta-\gamma}{\gamma}\Big\|_{\infty}(e^{\|\beta\|_{\infty}\tau}+1)\right]\cr
  &\ \ \ \ \times\Big[(M+\max\{M,\,\|U(1,\cdot)\|_\infty\})\|\beta-\gamma\|_{\infty}\tau
  +M_1\tau^{\theta}\Big]\cr
 &\le M|\Omega|\left[3+c_1e^{(\|\beta\|_{\infty}+\|\gamma\|_{\infty})\tau}
 +\Big\|\frac{\beta-\gamma}{\gamma}\Big\|_{\infty}(e^{\|\beta\|_{\infty}\tau}+1)\right]\cr
 &\ \ \ \ \times
\Big[(M+\max\{M,\,\|U(1,\cdot)\|_\infty\})\|\beta-\gamma\|_{\infty}
+M_1\Big]\tau^{\theta}.
\end{align}

Then, by  \eqref{L-infty-bound-for-I}, \eqref{Eqa-12}, \eqref{Eqa-4}, \eqref{I-Holder-estimate}-\eqref{Eqb-5}, we also estimate $F_2(t,\tau)$ as follows:
\begin{small}
\begin{align}\nonumber
   & F_2(t,\tau)\cr
   \le& \int_{\Omega}(|S(t+\tau)-S(t)|+|I(t+\tau)-I(t)|)\frac{|U(t)-I(t)|^2I(t)}{(m+S(t)+I(t))(m+S(t+\tau)+I(t+\tau))}\cr
    \le &\Big[\int_{\Omega}\frac{|U(t)-I(t)|^2I(t)}{(m+S(t)+I(t))(m+S(t+\tau)+I(t+\tau))}\Big]\cr
    &\ \, \times  \Big[\|S(t+\tau)-S(t)\|_{\infty}+\|I(t+\tau)-I(t)\|_{\infty}\Big]\cr 
    \le &\Big[\int_{\Omega}\frac{2(|U(t)|^2+ |I(t)|^2)}{m+S(t+\tau)+I(t+\tau)}\Big]\Big[M(\|\beta\|_{\infty}+\|\gamma\|_{\infty})\tau+M_1\tau^{\theta}\Big]\cr 
    \le & \Big[\int_{\Omega}\frac{2(2(\|(\beta-\gamma)/\gamma\|_{\infty}^2S^2(t)+m^2)+ Mc_1e^{(\|\beta\|_{\infty}+\|\gamma\|_{\infty})\tau}I(t+\tau))}{m+S(t+\tau)+I(t+\tau)}\Big]\cr
    &\ \, \times  \Big[M(\|\beta\|_{\infty}
    +\|\gamma\|_{\infty})\tau+M_1\tau^{\theta}\Big]\cr
    \le & \Big[\int_{\Omega}\frac{2(2(\|(\beta-\gamma)/\gamma\|_{\infty}^2\bar{S}^*e^{\tau\|\beta\|_{\infty}}S(t+\tau)+m^2)+ Mc_1e^{(\|\beta\|_{\infty}+\|\gamma\|_{\infty})\tau}I(t+\tau))}{m+S(t+\tau)+I(t+\tau)}\Big]\cr
    &\ \, \times \Big[M(\|\beta\|_{\infty}
    +\|\gamma\|_{\infty})\tau+M_1\tau^{\theta}\Big]\cr
    \le& 4|\Omega|\Big[{((\|(\beta-\gamma)/\gamma\|_{\infty}^2\bar{S}^*e^{\|\beta\|_{\infty}}+\|m\|_{\infty})+ Mc_1e^{(\|\beta\|_{\infty}+\|\gamma\|_{\infty})}}\Big]\cr
    &\ \, \times
    \Big[M(\|\beta\|_{\infty}+\|\gamma\|_{\infty})+M_1\Big]\tau^{\theta}.
\end{align}
This, together with \eqref{Eqb-6} and \eqref{Eqb-2}, implies that $F$ is H\"older continuous on $[1,\infty)$.
\end{small}
\end{proof}

\vskip10pt
We are now ready to prove Theorem \ref{th2.1}.

\begin{proof}[Proof of Theorem \ref{th2.1}] 
Let $U(t,x)$ be defined as in \eqref{U-def}.  By \eqref{Eqa-3}, we have 
\begin{align}\label{ly}
    \frac{d}{dt}\Big[\int_{\Omega}\frac{\gamma U^2}{2(\beta-\gamma)}+\frac{1}{2}\int_{\Omega}I^2\Big]=&\int_{\Omega}\frac{\gamma IU(I-U)}{m+S+I}+\int_{\Omega}I\Big[d_I\Delta I+\frac{\gamma I(U-I)}{m+S+I}\Big]\cr
    =& -d_I\int_{\Omega}|\nabla I|^2-\int_{\Omega}\frac{\gamma(U-I)^2I}{m+S+I}.
\end{align}
Integrating \eqref{ly} over $(0, t)\times \Omega$ gives
\begin{align}\nonumber
     &\int_{\Omega}\frac{\gamma U^2(t,\cdot)}{2(\beta-\gamma)}+\frac{1}{2}\int_{\Omega}I^2(t,\cdot)+\int_0^{t}\int_{\Omega}\Big[d_I|\nabla I(t,\cdot)|^2+\frac{\gamma(U(t,\cdot)-I(t,\cdot))^2I}{m+S(t,\cdot)+I(t,\cdot)}\Big]\cr
    =&\int_{\Omega}\frac{\gamma U^2(0,\cdot)}{2(\beta-\gamma)}+\frac{1}{2}\int_{\Omega}I^2(0,\cdot),\quad \forall\, t\ge 0.
\end{align}
This implies 
\begin{equation}\label{Eqa-5}
    \int_0^{\infty}\int_{\Omega}\Big[d_I|\nabla I(t,\cdot)|^2+\frac{\gamma(U(t,\cdot)-I(t,\cdot))^2I(t,\cdot)}{m+S(t,\cdot)+I(t,\cdot)}\Big]<\infty.
\end{equation} 

By Lemma \ref{lem2} and \eqref{I-Holder-estimate}, 
 the function 
 $$
 \int_{\Omega}\Big[d_I|\nabla I(t,\cdot)|^2+\frac{\gamma(U(t,\cdot)-I(t,\cdot))^2I(t,\cdot)}{m+S(t,\cdot)+I(t,\cdot)}\Big],\ \ t\in [1,\infty)
 $$
is H\"older continuous. Combined with Lemma \ref{lem0} and \eqref{Eqa-5}, we can infer that
\begin{equation}\label{Eqa-1}    
\lim_{t\to\infty}\int_{\Omega}\Big[d_I|\nabla I(t,\cdot)|^2+\frac{\gamma(U(t,\cdot)-I(t,\cdot))^2I(t,\cdot)}{m+S(t,\cdot)+I(t,\cdot)}\Big]=0.
\end{equation}
Thus, it follows from the well-known Poincare inequality that 
\begin{equation*}
   \Big \|I(t,\cdot)-\frac{1}{|\Omega|}\int_{\Omega}I(t,\cdot)\Big\|_{L^2(\Omega)}\le c_0(|\Omega|)\||\nabla I(t,\cdot)|\|_{L^2(\Omega)}\to 0\quad \text{as}\ t\to\infty.
\end{equation*}
Here, $c_0(|\Omega|)$ is a positive constant. This, along with the regularity theory for parabolic equations, yields 
    \begin{equation}\label{Eqa-2}
   \lim_{t\to\infty}\Big \|I(t,\cdot)-\frac{1}{|\Omega|}\int_{\Omega}I(t,\cdot)\Big\|_{C^1(\bar{\Omega})}= 0.
\end{equation}
Therefore, the $\omega$-limit set 
$$
\omega(I):=\{I^*\in C(\bar\Omega):\ I(t_j, \cdot)\to I^*\ \ \text{for some}\ t_j\to\infty\}
$$ 
consists of nonnegative constants. 

\vskip6pt
\noindent{\bf Case 1:}  $\omega(I)$ consists of at least one positive constant. 

In this case, let $\{t_j\}_{j\ge 1}$ be a sequence with $t_j\to\infty$ such that 
$$
\|I(t_j,\cdot)-c_*\|_{\infty}\to 0\ \ \text{as}\ j\to\infty
$$
for some positive constant  $c_*$.
By \eqref{Eqa-1}, after passing to a subsequence, we have 
$$
\frac{(U(t_j,\cdot)-I(t_j,\cdot))^2I(t_j,\cdot)}{m+S(t_j,\cdot)+I(t_j,\cdot)}\to 0\ \ \text{a.e. \ in}\ \Omega \ \ \text{as}\ j\to\infty.
$$
By \eqref{L-infty-bound-for-I} and \eqref{Eqa-4}, we have $U(t_j,\cdot)\to c_*$ a.e. in $\Omega$ as $j\to\infty$.  Thus 
\begin{equation}\label{S-star-eq}
S(t_j,\cdot)=\frac{\gamma}{\beta-\gamma}(U(t_j,\cdot)+m)
\to \frac{\gamma}{\beta-\gamma}(c_*+m)\ \ \text{a.e. \ in}\ \Omega \ \ \text{as}\ j\to\infty.
\end{equation}
It follows from  \eqref{Total-size identity} that
$$
N=\int_{\Omega}\Big[\frac{\gamma}{\beta-\gamma}(c_*+m)+c_*\Big]
=c_*\int_{\Omega}\frac{\beta}{\beta-\gamma}+\int_{\Omega}\frac{m\gamma}{\beta-\gamma}.
$$
Solving for $c_*$ yields 
 \begin{equation}\label{c-star-equation}
c_*=\frac{N-\int_{\Omega}\frac{m\gamma}{\beta-\gamma}}{\int_{\Omega}\frac{\beta}{\beta-\gamma}}.
\end{equation}
Since the constant $c_*$ in \eqref{c-star-equation} is independent of any sequence, $\omega(I)$ contains only one positive number. Since  $\omega(I)$ is connected, $\omega(I)=\{c_*\}$ and $\|I(t,\cdot)-c^*\|_{\infty}\to 0$ as $t\to\infty$.  Recalling that 
$$
\p_tS=(\beta-\gamma)\Big[\frac{\gamma}{\beta-\gamma}(m+I)- S \Big]\frac{I}{m+S+I},\quad t>0, x\in\bar\Omega,
$$
 by a simple perturbation argument, we can establish that  $$
 \|S(t,\cdot)-\frac{\gamma}{\beta-\gamma}(m+c_*)\|_{\infty}\to 0\ \ \text{as}\ t\to\infty.
 $$
Therefore, (ii) holds in this case.

\vspace{10pt}

\noindent{\bf Case 2:}  $\omega(I)=\{0\}$. 

 In this case, we have  
\begin{equation}\label{claim-1}
    \lim_{t\to\infty}\|I(t,\cdot)\|_{\infty}=0.
\end{equation}
It remains to show the convergence of $S$ and proves (i). Observe that $S(t,x)$ satisfies 
\begin{equation}
\p_tS=(\beta(x)-\gamma(x))\Big[\frac{\gamma(x)(m(x)+I(t,x))}{\beta(x)-\gamma(x)}-S(t,x)\Big]\frac{I(t,x)}{m(x)+S(t,x)+I(t,x)}.
\end{equation}
By the comparison principle for ODEs, for any given $t_0>0$,
$$
S(t,x)\ge \min\Big\{\frac{\gamma(x)m(x)}{\beta(x)-\gamma(x)},S(t_0,x)\Big\}>0,\quad \forall\, t>t_0>0,\  x\in\bar{\Omega}.
$$
Therefore, we have
\begin{equation}\label{sinf}
    \liminf_{t\to\infty}S(t,x)>0, \quad \forall\, x\in M^+. 
\end{equation}

\vskip6pt
For simplicity, let us define 
$$
J:=\int_0^\infty \|I(t,\cdot)\|_\infty dt. 
$$

\noindent{\bf Subcase 2-1:} $J<\infty$. 

By the equation of $S$ in \eqref{ds=0-model},
\begin{equation*}\label{Eqa-7}
\int_0^{\infty}\|\p_tS(t,\cdot)\|_{\infty}dt
\le\int_0^{\infty}\Big\|\gamma-\frac{\beta S}{m+S+I}\Big\|_{\infty}\|I(t,\cdot)\|_{\infty}dt\le (\|\gamma\|_{\infty}+\|\beta\|_{\infty})J<\infty.
\end{equation*}
Hence,  there holds
$$
S(t,\cdot)\to S_0+\int_0^{\infty}\p_tS(t,\cdot)dt=:S^*\  \  \mbox{in} \ C(\bar{\Omega})\ \,\mbox{ as}\ t\to\infty.
$$
Clearly, $S^*\in C^+(\bar\Omega)$. By \eqref{sinf}, we have $S^*>0$ on $M^+$. 

\vskip6pt
\noindent{\bf Subcase 2-2:} $J=\infty$. 

 Fix  $0<\varepsilon\ll 1$. By \eqref{claim-1}, there is $t_{\varepsilon}\gg 1$ such that 
\begin{equation}\label{Eqa-10}
\|I(t,\cdot)\|_{\infty}<\varepsilon,\quad \forall\, t>t_{\varepsilon}.
\end{equation}
By the equation of $S$ in \eqref{ds=0-model},
$$
\p_tS(t,x)\ge \Big[\frac{\gamma(x)(m(x)-\varepsilon)}{\beta(x)-\gamma(x)} -S(t,x)\Big]\frac{(\beta(x)-\gamma(x))I(t,x)}{m(x)+S(t,x)+I(t,x)},\quad \forall\, t>t_{\varepsilon}, \, x\in\bar{\Omega}.
$$
We can now invoke the comparison principle for ODEs to conclude that 
\begin{equation}\label{Eqa-6}
S(t,x)\ge \frac{\gamma(x)(m(x)-\varepsilon)}{\beta(x)-\gamma(x)}
+\Big[S(t_{\varepsilon},x)-\frac{\gamma(x)(m(x)-\varepsilon)}{\beta(x)-\gamma(x)}\Big]e^{-\int_{t_{\varepsilon}}^t
\frac{(\beta(x)-\gamma(x))I(s,x)}{{m(x)+S(s,x)+I(s,x)}}ds}
\end{equation}
for all $t>t_{\varepsilon},\, x\in\bar{\Omega}$. 

Since $J=\infty$,  it follows from \eqref{Harnak-inequality}, \eqref{L-infty-bound-for-I} and \eqref{Eqa-4}  that 
\begin{align}\label{aa-1}
\inf_{x\in\bar{\Omega}}\int_{t_{\varepsilon}}^t\frac{I(s,x)}{{m(x)+S(s,x)+I(s,x)}}ds\ge \frac{1}{(\|m\|_{\infty}+M+\bar{S}^*)c_1}\int_{t_{\varepsilon}}^{t}\|I(s,\cdot)\|_{\infty}ds\to \infty  
\end{align}
as $t\to\infty$. Therefore, by \eqref{Eqa-4} and \eqref{Eqa-6}, we have 
$$
 \liminf_{t\to\infty}S(t,x)\ge  \frac{\gamma(x)(m(x)-\varepsilon)}{\beta(x)-\gamma(x)}\quad \text{uniformly on } \ x\in\bar{\Omega}.
$$
Letting $\varepsilon \to 0^+$, we obtain that 
\begin{equation}\label{Eqa-8}
 \liminf_{t\to\infty}S(t,x)\ge  \frac{\gamma(x)m(x)}{\beta(x)-\gamma(x)}\quad \text{uniformly on}\ x\in \bar{\Omega}.
\end{equation}
Similarly, 
$$
\p_tS(t,x)\le \Big[\frac{\gamma(x)(m(x)+\varepsilon)}{\beta(x)-\gamma(x)} -S(t,x)\Big]\frac{(\beta(x)-\gamma(x))I(t,x)}{m(x)+S(t,x)+I(t,x)},\quad \forall\, t>t_{\varepsilon}.
$$
We can now invoke the comparison principle for ODEs to conclude that
$$
S(t,x)\le \frac{\gamma(x)(m(x)+\varepsilon)}{\beta(x)-\gamma(x)}
+\Big[S(t_{\varepsilon},x)-\frac{\gamma(x)(m(x)+\varepsilon)}{\beta(x)-\gamma(x)}\Big]
e^{-\int_{t_{\varepsilon}}^t\frac{(\beta(x)-\gamma(x))I(s,x)}{{m(x)+S(s,x)+I(s,x)}}ds}
$$
for all $t>t_{\varepsilon},\, x\in\bar{\Omega}$. Using the above inequality, we can show 
$$
\limsup_{t\to\infty}S(t,x)\le \frac{\gamma(x)m(x)}{\beta(x)+\gamma(x)}\quad \text{uniformly on}\ x\in\bar{\Omega}.
$$
Combining this with \eqref{Eqa-8}, we obtain  
$$
\lim_{t\to\infty}S(t,x)=\frac{\gamma(x)m(x)}{\beta(x)-\gamma(x)}=:S^*\quad \text{uniformly on}\ x\in\bar{\Omega}.
$$
This finishes the proof for Subcase 2-2. 

Finally, it is easy to see that if $N\le \int_\Omega \frac{m\gamma}{\beta-\gamma}$, then Case 2 happens and (i) holds. 
\end{proof}

\vskip10pt
In what follows, we provide a proof for Proposition \ref{prop1}.

\begin{proof}[Proof of Proposition \ref{prop1}]  Suppose that $\beta>\gamma$ on $\bar{\Omega}$. Fix $N>\int_{\Omega}\frac{m\gamma}{\beta-\gamma}$ and $d_I>0$. Let $(S,I)$ denote the unique classical solution of \eqref{ds=0-model}, and $U$ be defined by \eqref{U-def}.  It is easily checked that  $(U(t,x),I(t,x))$ satisfies the following cooperative system: 
\begin{equation}\label{coop-system}
\begin{cases}
   \p_t U=(I-U)F_1(t,x), & x\in\bar{\Omega},\ t>0,\cr
  \p_t  I=d_I\Delta I+(U-I)F_2(t,x), & x\in\Omega,\ t>0,\cr
    \partial_{\nu}I=0, & x\in\partial\Omega,\ t>0,
    \end{cases}
\end{equation}
where 
$$F_1(t,x)=\frac{(\beta(x)-\gamma(x))I(t,x)}{m(x)+S(t,x)+I(t,x)}>0,\ \ F_2(t,x)=\frac{\gamma(x) I(t,x)}{m(x)+S(t,x)+I(t,x)}>0
$$ 
for all $t>0$ and $x\in\bar{\Omega}$.

\vskip10pt
{\rm (i)} Suppose  that $S_0\ge \frac{m\gamma}{\beta-\gamma}$. Then, $U(0,x)\ge 0$ for all $x\in\bar{\Omega}$. Since $I(t,\cdot)>0$ on $\bar{\Omega}$ for all $t>0$, it follows from \eqref{Eqa-3} and the comparison principle for ODEs that $U(t,\cdot)>0$ on $\bar{\Omega}$ for all $t>0$.   Let 
$$
c(t)=\min\{\min_{x\in\bar{\Omega}}I(t,x),\ \min_{x\in\bar{\Omega}}U(t,x)\}>0,\quad \forall\, t>0.
$$
For every $t_0>0$, the constant function $(\underline{U}(t,x),\underline{I}(t,x))=(c(t_0),c(t_0))$, $t>0$, solves the cooperative system \eqref{coop-system} and satisfies 
$$
\underline{U}(t_0,x)\le U(t_0,x)\quad \text{and}\quad \underline{I}(t_0,x)\le I(t_0,x),\quad x\in\bar\Omega.
$$
Therefore, by the comparison principle for cooperative systems, we have
$$
c(t_0)=\underline{U}(t,x)\le U(t,x) \quad \text{and}\quad c(t_0)=\underline{I}(t,x)\le I(t,x),\quad \forall\, t>t_0,\ x\in \bar{\Omega}.
$$
Hence, $0<c(t_0)\le c(t)$ for all $t\ge t_0>0$. This shows that 
$$
\inf_{t>t_0}\min_{x\in\bar{\Omega}}I(t,x)\ge c(t_0)>0,\quad \forall\, t_0>0.
$$
So $\liminf_{t\to\infty} I(t, x)>0$ uniformly for $x\in\bar\Omega$. Consequently,   Theorem \ref{th2.1}-{\rm (ii)} holds. 

{\rm (ii)} Suppose that $S_0\le \frac{m\gamma}{\beta-\gamma}$ and $\|I_0\|_{\infty}<\frac{N-\int_{\Omega}\frac{m\gamma}{\beta-\gamma}}{\int_{\Omega}\frac{\gamma}{\beta-\gamma}}$. Define 
$$\hat{U}(t,\cdot)=\|I_0\|_{\infty}-U(t,\cdot)\quad \text{and}\quad \hat{I}(t,\cdot)=\|I_0\|_{\infty}-I(t,\cdot),\quad t\ge 0.
$$
Then $(\hat{U}(t,x),\hat{I}(t,x))$ solves the cooperative system \eqref{coop-system}. Observe also that 
$$
\hat{I}(0,\cdot)\ge 0\quad \text{and}\quad \hat{U}(0,\cdot)=\|I_0\|_{\infty}-\frac{\beta-\gamma}{\gamma}\Big(S_0-\frac{m\gamma}{\beta-\gamma}\Big)>0 .
$$
Therefore, by the comparison principle for cooperative systems, 
$$
\mbox{$\hat{U}(t,\cdot)>0$ and $\hat{I}(t,\cdot)>0$\ for all $t>0$.} 
$$
As a result, we find that
$$
S(t,\cdot)=\frac{\gamma}{\beta-\gamma}[U(t,\cdot)+m]
=\frac{\gamma}{\beta-\gamma}[\|I_0\|_{\infty}-\hat{U}(t,\cdot)+m]
<\frac{\gamma}{\beta-\gamma}(\|I_0\|_{\infty}+m),\quad \forall\ t>0.
$$
This shows that, for all $t>0$, 
\begin{align*}
   \int_{\Omega}I(t,x)=&N-\int_{\Omega}S(t,x)\cr
    >&N-\int_{\Omega}\frac{\gamma}{\beta-\gamma}(m+\|I_0\|_{\infty})\cr
    =& \left(\frac{N-\int_{\Omega}\frac{m\gamma}{\beta-\gamma}}{\int_{\Omega}\frac{\gamma}{\beta-\gamma}}
-\|I_0\|_{\infty}\right)\int_{\Omega}\frac{\gamma}{\beta-\gamma},
\end{align*}
which along with \eqref{Eqa-2} gives that
$$
\liminf_{t\to\infty} I(t, x)\ge \frac{1}{|\Omega|} \left(\frac{N-\int_{\Omega}\frac{m\gamma}{\beta-\gamma}}{\int_{\Omega}\frac{\gamma}{\beta-\gamma}}
-\|I_0\|_{\infty}\right)\int_{\Omega}\frac{\gamma}{\beta-\gamma}>0.
$$
Hence, Theorem \ref{th2.1}-(ii) holds.
\end{proof}

\vskip10pt

Next we will present the proof for Theorem \ref{th2.2}.

\begin{proof}[Proof of Theorem \ref{th2.2}] We first prove the assertion (i). To the end, we pick an interior point $x_0$ of $H^0\cap M^+$. Then there exists a closed ball $B\subset H^0$ centered at $x_0$ such that $m>0$ on $B$.

As in the proof of Lemma \ref{lem1}, the following Harnack's inequality holds:  
\begin{equation}\label{minmax}
\max_{x\in\bar\Omega} I(t,x)\le C \min_{x\in\bar\Omega} I(t,x), \ \ x\in\bar\Omega,\, t\ge 1
\end{equation}
for some constant $C>1$.

Since $\beta\ge\gamma$ on $\bar\Omega$, we have 
$$
\partial_t S=\frac{(-\beta+\gamma)S+\gamma I+\gamma m}{m+S+I}I\le \gamma\frac{I^2+ m I}{m+S+I},\ \ t>0,\,x\in\bar\Omega.
$$ 
Given any $x\in\bar\Omega$,  the comparison principle for parabolic equations ensures that $S(x, t)\le \bar S(t)$ for all $t\ge 1$, where $\bar S$ is the solution of the following ODE problem:
\begin{equation}\label{model-comp}
\begin{cases}
\displaystyle \bar S'(t)=\gamma(x) \frac{I^2(t,x)+m(x)I(t,x)}{m(x)+\bar S+I(t,x)},&t>1,\\
\bar S(1)=S(1,x). &
\end{cases}
\end{equation}

We can show that $KS(x_0, t)$ is an upper solution of \eqref{model-comp} for some large $K>0$.  Indeed, we just need to check
$$
K\partial_tS(t,x_0)=K\gamma(x_0) \frac{I^2(t,x_0)+m(x_0)I(t,x_0)}{m(x_0)+S(t,x_0)+I(t,x_0)}
\ge  \gamma(x) \frac{I^2(t,x)+m(x)I(t,x)}{m(x)+KS(t,x_0)+I(t,x)}.
$$
By \eqref{minmax}, it is sufficient to check the following inequality: 
$$
K\gamma(x_0) \frac{I^2(t,x)/C^2+m(x_0)I(t,x)/C}{m(x_0)+S(t,x_0)+{\frac{I(t,x)}{C}}}
\ge  \gamma(x) \frac{I^2(t,x)+m(x)I(t,x)}{m(x)+KS(t,x_0)+I(t,x)}.
$$
Some elementary computation shows there is a large constant $K>0$ depending on $m|_B$ such that the above inequality holds. It then follows from the parabolic comparison principle that 
$$
KS(t,x_0)\ge \bar S(t)\ge S(t,x),\ \ \forall t\ge 1.
$$ 
By interchanging $x_0$ and $x$, we have 
\begin{equation}\label{xx0}
S(t,x_0)/K\le S(t,x)\le KS(t,x_0), \ \ \ \forall x\in B, \ t\ge 1. 
\end{equation}
Using \eqref{xx0},  we deduce
$$
N\ge \int_\Omega S(t,\cdot)\ge \int_{B} S(t,\cdot)\ge \int_{B} \frac{S(t,x_0)}{K}=\frac{|B| S(t,x_0)}{K}
$$ for all $t\ge 1$. Therefore, we have 
$$
S( t,x_0)\le KN/|B| \ \ \text{and}\ \  S(t,x)\le K^2N/|B|,\ \ \forall x\in\bar\Omega,  t\ge 1. 
$$
This implies that there exists $M>0$ depending on (the $L^\infty$ norm of) initial data such that 
\begin{equation}\label{bound-incidence}
  \|S( t,\cdot)\|_{L^\infty(\Omega)}\le M, \ \ \forall  t\ge 1.
\end{equation}

By \eqref{minmax} and $\int_\Omega I\le N$, there is $M'>0$ such that $0\le I(t,x)\le M'$ for all $x\in\bar\Omega$ and $t\ge 1$. 
Therefore, this and \eqref{bound-incidence} yield
$$
\partial_t S=\frac{\gamma I^2+\gamma m I}{m+S+I}\ge \frac{a }{\|m\|_\infty+M+M'} \min_{y\in B} I(t,y),\ \ \forall x\in B, t>1,
$$
where $a=\min_{B} \gamma m$. 
Integrating the above inequality over $B\times (1, \infty)$ and noticing that $H^0$ has positive measure, by \eqref{minmax}, we have
\begin{equation}\label{Iinfb}
\int_1^\infty \|I( t,\cdot)\|_{\infty}dt<\infty.
\end{equation}
Since  $0\le I(x, t)\le M'$ for all $x\in\bar\Omega$ and $t\ge 1$, by the $L^p$ estimate and the $I$-equation of \eqref{ds=0-model}, $I$ is H\"older continuous on $\bar\Omega\times [1, \infty)$. Thus, it follows from Lemma \ref{lem0} and \eqref{Iinfb} that $\|I( t,\cdot)\|_\infty\to 0$ as $t\to\infty$. 

By the equation of $S$ and \eqref{Iinfb}, we infer
   \begin{align*}
\int_1^\infty\|\p_tS(t,\cdot)\|_\infty dt\le&\int_1^\infty \Big\|\frac{\beta S(t,\cdot) I(t,\cdot)}{m+S(t,\cdot)+I(t,\cdot)}-\gamma I(t,\cdot)\Big\|_\infty dt\cr 
\le&(\|\beta\|_\infty+\|\gamma\|_\infty)\int_1^\infty\|I( t,\cdot)\|_{\infty}dt<\infty. 
\end{align*}  
It follows that $S(t,\cdot)\to S^*:=S(0,\cdot)+\int_0^\infty \p_tS(t,\cdot)dt\in C(\bar\Omega)$ uniformly on $\Omega$ as $t\to\infty$. 

To finish the proof of the assertion (i), it remains to show $S^*>0$ on $M^+\cup H^0$. The proof of $S^*>0$ on $M^+\cap H^+$ is the same as in that of Theorem \ref{th2.1}(i) (using \eqref{sinf}). Now, for any given $x\in H^0$, from the first equation of \eqref{ds=0-model}, we see that
$$
\p_tS(t,x)=\frac{\gamma(x)m(x)+\gamma(x)I(t,x)}{m(x)+S(t,x)+I(t,x)}I(t,x)>0,\ \ \forall t>0.
$$
Thus, $S(t,x)$ is increasing in $t\in(0,\infty)$ for such $x\in H^0$, and hence $S^*>0$ on $H^0$. Therefore,  $S^*>0$ on $H^0\cup (M^+\cap H^+)=M^+\cup H^0$. This completes the proof of the assertion (i).

Using \eqref{ly},  the assertion (ii) can be proved similarly to \cite[Theorem 4.4 (ii)]{salako2024degenerate}, and thus the details are omitted here.
\end{proof}

\vskip10pt
We are now in a position to provide the proof of Theorem \ref{th2.3}.

\begin{proof}[Proof of Theorem \ref{th2.3}]
    Let $x_0\in\bar\Omega$ be such that $\beta(x_0)<\gamma(x_0)$. By the continuity of $\beta$ and $\gamma$, there exists $\epsilon_0>0$ such that $\gamma-\beta\ge \epsilon_0$ on $B$ for some closed ball $B\subset\Omega$.  
    By the first equation of \eqref{ds=0-model}, we have 
    \begin{equation}\label{Slower}
\partial_t S(t, x)\ge (\gamma(x)-\beta(x))I(t, x)\ge\epsilon_0 I(t, x), \ \ x\in B,\, t\ge 0. 
    \end{equation}
By the first equation of \eqref{ds=0-model} again, 
    \begin{equation}\label{Sup}
\partial_t S(t, x)\le \|\gamma\|_\infty I(t, x), \ \ x\in \bar\Omega,\, t\ge 0. 
    \end{equation}

We claim that $\int_1^\infty I(t,x_0)dt<\infty$. Suppose to the contrary that $\int_1^\infty I(t,x_0)dt=\infty$. By Lemma \ref{lem1}, there exists $C>0$ such that
\begin{equation}\label{IC}
\frac{1}{C} \int_1^t I(s,x_0)ds\le \int_1^t I(s, x)ds\le C \int_1^t I(s,x_0)ds, \ \ x\in \bar\Omega,\, t\ge1. 
\end{equation}
On the other hand, integrating \eqref{Slower} over $(1, t)$, we obtain 
\begin{equation}\label{Ses}
    S(t,x)\ge S(1, x)+\epsilon_0 \int_1^t I(s,x)ds\ge S(1, x)+\frac{\epsilon_0}{C} \int_1^t I(s,x_0)ds, \ \ x\in B.  
\end{equation}
Furthermore, integrating \eqref{Ses} over $B$ yields 
$$
\int_B S(t, \cdot)\ge \int_B S(1, \cdot) +\frac{\epsilon_0 |B|}{C}\int_1^t I(s,x_0)ds. 
$$
It follows that 
$$
\lim_{t\to\infty}\int_B S(t,\cdot)\ge \int_B S(1,\cdot) +\frac{\epsilon_0 |B|}{C}\int_1^\infty I(s,x_0)ds=\infty. 
$$
This contradicts with the fact that $\int_\Omega S(t,\cdot) \le N$ for all $t\ge 0$.

Now, integrating \eqref{Sup} over $(1, t)$ and using \eqref{IC}, we find that
$$
S(t, x)\le S(1, x)+\|\gamma\|_\infty \int_1^t I(s, x)ds\le S(1, x)+C\|\gamma\|_\infty \int_1^t I(s,x_0)ds, \ \ \forall t\ge 1, x\in\bar\Omega.
$$
By $\int_1^\infty I(t,x_0)dt<\infty$ and \eqref{IC}, 
\begin{equation}\label{Ib}
\int_1^\infty\|\gamma I(t,\cdot)\|_\infty \le C\|\gamma\|_\infty\int_1^\infty I(t,x_0)<\infty
\end{equation}
and 
$$
\int_1^\infty \Big\|\beta\frac{S(t,\cdot)I(t,\cdot)}{m+S(t,\cdot)+I(t,\cdot)}\Big\|_\infty \le \|\beta\|_\infty \int_1^\infty\|I(t,\cdot)\|_\infty \le C\|\beta\|_\infty \int_1^\infty I(t,x_0)<\infty.
$$
It then follows that 
$$
\int_1^\infty \|\p_tS(t,\cdot)\|_\infty =\int_1^\infty \Big\|\gamma I(t,\cdot)-\beta\frac{S(t,\cdot)I(t,\cdot)}{m+S(t,\cdot)+I(t,\cdot)}\Big\|_\infty <\infty. 
$$
Therefore,
$$
S(t,\cdot)\to S(1,\cdot)+\int_1^\infty \p_tS(t,\cdot):=S^*\in C(\bar\Omega)
$$
uniformly on $\bar\Omega$ as $t\to\infty$. By \eqref{Ib} and the H\"older continuity of $[1, \infty)\ni t\mapsto \|I(\cdot, t)\|_\infty$, Lemma \ref{lem0} results in $I(t,\cdot)\to 0$ in $C(\bar\Omega)$ as $t\to\infty$. Finally, by \eqref{Total-size identity}, we must have $\int_\Omega S^*dx=N$. 

It remains to verify $S^*>0$ on $M^+\cup H^0\cup H^-$. The proof of $S^*>0$ on $(M^+\cap H^+)\cup H^0$ is the same as in that of Theorem \ref{th2.1}(i) and Theorem \ref{th2.2}(i). For any given $x\in H^-$, from the first equation of \eqref{ds=0-model}, it follows that
$$
\p_tS(t,x)=\frac{\gamma(x)m(x)+\gamma(x)I(t,x)+(\gamma(x)-\beta(x))S(t,x)}{m(x)+S(t,x)+I(t,x)}I(t,x)>0,\ \ \forall t>0,
$$
which implies that $S(t,x)$ is increasing in $t\in(0,\infty)$ for such $x\in H^-$, and hence $S^*>0$ on $H^-$. The proof is completed.
\end{proof}

In the sequel, we  verify Proposition \ref{prop2}. 
\begin{proof}[Proof of Proposition \ref{prop2}] Let  $\varphi>0$ be an eigenfunction associated with $\lambda_0$. Then the standard regularity theory of elliptic equations guarantees 
$\varphi\in C^{2}({\mathcal{O}})\cap C(\bar{\mathcal{O}})$. 
Suppose that $(S,I)$ is a solution of \eqref{ds=0-model} with  initial data  $(S_0,I_0)\in[C^{+}(\Omega)]^2$ with $I_0\ge,\ne 0$ such that $\|I(t,\cdot)\|_{\infty}\to 0$ and $\|S(t,\cdot)-S^*\|_{\infty}\to 0$ as $t\to\infty$ for some  $S^*\in C^+(\bar{\Omega})$.
By Hopf's boundary lemma,  $\partial_{\nu}\varphi<0$ on $\partial\mathcal{O}$. Since $I(t,\cdot)>0$ on $\bar{\mathcal{O}}$ for all $t>0$,  we have  
\begin{align*}
    \frac{d}{dt}\int_{\mathcal{O}}{\varphi}I=&d_I\int_{\mathcal{O}}{\varphi}\Delta I+\int_{\mathcal{O}}\Big(\frac{\beta S}{S+I}-\gamma\Big){\varphi}I\cr
    =&d_I\left(\int_{\mathcal{O}}I\Delta {\varphi}+\int_{\partial\mathcal{O}}\varphi\partial_{\nu}I-\int_{\partial\mathcal{O}}I\partial_{\nu}{\varphi}\right) +\int_{\mathcal{O}}\Big(\frac{\beta S}{S+I}-\gamma\Big){\varphi}I\cr
    >& d_I\int_{\mathcal{O}}I\Delta\varphi +\int_{\mathcal{O}}\Big(\frac{\beta S}{S+I}-\gamma\Big)\varphi I \cr
    = &{\lambda}_0\int_{\mathcal{O}}{\varphi}I-\int_{\mathcal{O}}\frac{\beta I}{S+I}{\varphi}I\cr
    = &\Big({\lambda}_0-\int_{\mathcal{O}}\frac{\beta I}{S+I}\frac{{\varphi}I}{\int_{\mathcal{O}}{\varphi}I}\Big)\int_{\mathcal{O}}{\varphi}I,\quad t>0.
\end{align*}
By the comparison principle for ODEs,
$$
\int_{\mathcal{O}}\varphi(x) I(t,x)dx\ge\int_{\mathcal{O}}\varphi I(1,x)e^{\int_1^t\Big[{\lambda}_0-\int_{\mathcal{O}}\frac{\beta I(s,y)}{S(s,y)+I(s,y)}\frac{{\varphi}I(s,y)}{\int_{\mathcal{O}}{\varphi}(y)I(s,y)dy}dy\Big]ds}dx,\quad t\ge 1.
$$
Since $\int_{\mathcal{O}}I(1,\cdot)\varphi>0$, we  divide both sides of the above inequality by $\int_{\mathcal{O}}I(1,\cdot)\varphi$ and then take $\log$ to obtain
\begin{equation}\label{Eqb-7}
\frac{1}{t}\ln\Big(\frac{\int_{\mathcal{O}}{\varphi}I(t,x)}{\int_{\mathcal{O}}{\varphi}I(1,x)}\Big)\ge\frac{{\lambda}_0(t-1)}{t}- \frac{1}{t}\int_1^t\int_{\mathcal{O}}\frac{\beta I(s,x)}{S(s,x+I(s,x)}\frac{{\varphi}I(s,x)}{\int_{\mathcal{O}}{\varphi}(y)I(s,y)dy}dxds,\quad t>1.
\end{equation}
Define
$$
G(t,x):=\frac{1}{t}\int_1^t\frac{\beta(x) I(s,x)}{S(s,x)+I(s,x)}\frac{{\varphi}(x)I(s,x)}{\int_{\mathcal{O}}{\varphi}(y)I(s,y)}ds,\quad x\in\mathcal{O}, \, t\ge 1.
$$
Since $\|I(t,\cdot)\|_{\infty}\to 0$ as $t\to\infty$, we deduce from \eqref{Eqb-7} that 
\begin{align}\label{Eqb-4}
\liminf_{t\to\infty}\int_{\mathcal{O}}G(t,x)=&  \liminf_{t\to\infty}\frac{1}{t}\int_1^t\int_{\mathcal{O}}\frac{\beta I}{S+I}\frac{{\varphi}I}{\int_{\mathcal{O}}{\varphi}I}\cr 
  \ge&\liminf_{t\to\infty}\Big[\frac{{\lambda}_0(t-1)}{t}-\frac{1}{t}\ln\Big(\frac{\int_{\mathcal{O}}{\varphi}I(t,x)}
  {\int_{\mathcal{O}}{\varphi}I(1,x)}\Big)\Big]\cr
  \geq&{\lambda}_0>0.
\end{align}

On the other hand, by \eqref{Harnak-inequality} and the definition of $G$, 
\begin{align}\label{bound-G}
0< G(t,x)\le&\frac{\|\beta\|_{\infty}}{t}\int_1^t\frac{c_{1}{\varphi}(x)\min_{z\in\bar{\Omega}}I(s,z)}{(\int_{{\mathcal{O}}}{\varphi}(y)dy)\min_{z\in\bar{\Omega}}I(s,z)}ds 
\le  \frac{c_1\|\beta\|_{\infty}\|{\varphi}\|_{\infty}}{\int_{\mathcal{O}}{\varphi}},\quad \forall\, x\in{\mathcal{O}},\, t>1.
\end{align}
Suppose that $S^*(x_0)>0$ for some $x_0\in\mathcal{O}$. Then, we  have
$$
\inf_{t\ge 1}S(t,x_0)>0.
$$
By $\|I(t,\cdot)\|_{\infty}\to 0$ as $ t \to\infty$ and  \eqref{Harnak-inequality}, we have
\begin{align}\label{conv-zero}
0<G(t,x_0) =&\frac{1}{t}\int_1^t\Big[\frac{\beta(x_0) }{S(s,x_0)+I(s,x_0)}\Big]\Big[\frac{{\varphi}(x_0)I(s,x_0)}{\int_{\mathcal{O}}{\varphi}(y)I(s,y)dy}\Big]I(s,x_0)ds\cr 
 \le& \left(\frac{\|\beta\|_{\infty}}{\inf_{s\ge 1}S(s,x_0)}\right)\left(\frac{c_1\|{\varphi}\|_{\infty}}{\int_{\mathcal{O}}{\varphi}}\right)\left(\frac{1}{t}\int_{1}^tI(s,x_0)ds\right)\to 0
\end{align}
 as $ t \to\infty$. 

Hence, if it was the case that $|\{x\in\mathcal{O} : S^*(x)=0 \}|=0$,  in view of \eqref{bound-G} and \eqref{conv-zero}, we would derive from the Lebesgue Dominated Convergence Theorem that 
$$
\lim_{t\to\infty}\int_{\mathcal{O}}G(t,\cdot)=0,
$$
which  contradicts  \eqref{Eqb-4}. Therefore, we must have $ |\{x\in\mathcal{O} : S^*(x)=0 \}|>0$.
\end{proof}

With the aid of Proposition \ref{prop2}, we can readily prove Corollary \ref{cor1}.
\begin{proof}[Proof of Corollary \ref{cor1}]
By Theorems \ref{th2.1}, \ref{th2.2} and \ref{th2.3}, the  solution \((S, I)\) of \eqref{ds=0-model} satisfies \(\|I(t, \cdot)\|_{\infty} \to 0\) and \(\|S(t, \cdot) - S^*\|_{\infty} \to 0\) as \(t \to \infty\) for some \(S^* \in C^+(\bar{\Omega})\) under one of the assumptions (i)-(iii). 

Since $M^0\cap H^+$ has non-empty interior, there is an open set $\mathcal{O}\subseteq M^0\cap H^+$ with a Lipschitz boundary.
Let \(\lambda_0\) be defined in Proposition \ref{prop2}. It is well known that $\lambda_0$ is  decreasing in \(d_I \in (0, \infty)\) with
\[
\lim_{d_I \to 0} \lambda_0 = \max_{x \in \bar{\mathcal{O}}} (\beta(x) - \gamma(x))>0 \ \ \text{and} \quad \lim_{d_I \to \infty} \lambda_0 = -\infty.
\]
 Thus,  there is a critical value \(d_0^* > 0\) such that \(\lambda_0 > 0\) for \(0 < d_I < d_0^*\) and \(\lambda_0 \leq 0\) for \(d_I \geq d_0^*\).
Then the existence of $\Omega^*\subset M^0\cap H^+$  follows from Proposition \ref{prop2}.
\end{proof}


\section{Proof of main results for system \eqref{dI=0-model}}
In this section, we are concerned with system \eqref{dI=0-model}, and aim to prove Theorems \ref{th2.4}-\ref{th2.6}, Proposition \ref{prop3} and Corollary \ref{coro2.3}. We begin with the following lemma.
  \begin{lem}\label{lem4.1} Let $(S,I)$ be the solution of \eqref{dI=0-model}. Then we have
 \begin{equation}\label{Eqd-1}
     \liminf_{t\to\infty}\int_{\Omega}S(t,\cdot)\ge Nl_0, 
 \end{equation}
 where 
 $$
\gamma_{\min}:=\min_{x\in\bar\Omega}\gamma(x),\ \ \ l_0:=\frac{\gamma_{\min}}{\gamma_{\min}+\|\beta\|_{\infty}}.
 $$ 
 \end{lem}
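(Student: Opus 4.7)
The plan is to derive a differential inequality for $f(t):=\int_\Omega I(t,\cdot)$ by integrating the $I$-equation of \eqref{dI=0-model} over $\Omega$, and then use the conservation identity $\int_\Omega S(t,\cdot)+\int_\Omega I(t,\cdot)=N$ to close the argument. The key observation is that the nonlinear incidence term can be controlled pointwise by $S$ alone.

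First, I would integrate the equation $\p_t I=-\gamma(x)I+\frac{\beta(x)SI}{m(x)+S+I}$ over $\Omega$ to obtain
\begin{equation*}
f'(t)=-\int_\Omega \gamma(x)I(t,\cdot)\,dx+\int_\Omega \frac{\beta(x)S(t,\cdot)I(t,\cdot)}{m(x)+S(t,\cdot)+I(t,\cdot)}\,dx.
\end{equation*}
Next, since $m,S,I\ge 0$, the elementary pointwise bound $\frac{SI}{m+S+I}\le S$ holds everywhere on $\Omega$. Combining this with $\gamma(x)\ge\gamma_{\min}$ and $\beta(x)\le\|\beta\|_\infty$ yields
\begin{equation*}
f'(t)\le -\gamma_{\min}f(t)+\|\beta\|_\infty\int_\Omega S(t,\cdot)\,dx.
\end{equation*}

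Now I invoke the conservation identity $\int_\Omega S(t,\cdot)\,dx=N-f(t)$, which is inherited from \eqref{dI=0-model} (the constraint is built into the system). Substituting gives the linear differential inequality
\begin{equation*}
f'(t)\le -\bigl(\gamma_{\min}+\|\beta\|_\infty\bigr)f(t)+\|\beta\|_\infty N.
\end{equation*}
Applying the standard comparison argument for scalar ODEs to $g(t)=f(t)-\frac{\|\beta\|_\infty N}{\gamma_{\min}+\|\beta\|_\infty}$, I conclude
\begin{equation*}
\limsup_{t\to\infty}f(t)\le \frac{\|\beta\|_\infty N}{\gamma_{\min}+\|\beta\|_\infty}=N(1-l_0).
\end{equation*}

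Finally, using the conservation identity once more,
\begin{equation*}
\liminf_{t\to\infty}\int_\Omega S(t,\cdot)\,dx=N-\limsup_{t\to\infty}f(t)\ge N-N(1-l_0)=Nl_0,
\end{equation*}
as desired. There is no real obstacle here: the only substantive ingredient is the pointwise bound $\frac{SI}{m+S+I}\le S$, which allows the nonlinearity to be absorbed linearly and turns the mass identity into a usable scalar differential inequality.
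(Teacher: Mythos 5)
Your proof is correct and is essentially the paper's own argument in mirror image: the paper derives the differential inequality $\frac{d}{dt}\int_\Omega S\ge \gamma_{\min}N-(\gamma_{\min}+\|\beta\|_\infty)\int_\Omega S$ directly from the $S$-equation using the same pointwise bound $\frac{SI}{m+S+I}\le S$ and the conservation identity, whereas you work with $\int_\Omega I$, which is equivalent since the two time derivatives are negatives of each other. No substantive difference.
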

 \begin{proof} In view of
 \begin{equation}\label{total-population}
 \int_{\Omega} (S (t,\cdot)+ I(t,\cdot)) = N, \ \  \ \forall\, t \ge 0,
 \end{equation}
 we observe that 
   \begin{align*}
\frac{d}{dt}\int_{\Omega}S(t,\cdot)=&\int_{\Omega}\Big[\gamma -\frac{\beta S(t,\cdot)}{m+S(t,\cdot)+I(t,\cdot)}\Big]I(t,\cdot)\cr 
 \ge&\ \gamma_{\min}\int_{\Omega}I(t,\cdot)-\|\beta\|_{\infty}\int_{\Omega}S(t,\cdot)\cr
 =&\ \gamma_{\min}N-(\gamma_{\min}+\|\beta\|_{\infty})\int_{\Omega}S(t,\cdot), \quad \forall \,t>0.
\end{align*}  
Therefore, the desired result follows by a simple comparison principle for ODEs.
\end{proof}
 
 The next result provides the uniform persistence property of the susceptible population.
 \begin{prop}\label{prop3.1}  Let $(S,I)$ be the solution of \eqref{dI=0-model}. Then there exists a positive number $l_1>0$  independent of initial data, $m$ and $N$, such that 
 \begin{equation}\label{Eqd-3}
\liminf_{t\to\infty}\min_{x\in\bar{\Omega}}S(t,x)\ge Nl_1.
 \end{equation}
  \end{prop}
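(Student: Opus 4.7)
The plan is to upgrade the integral lower bound $\liminf_{t\to\infty}\int_{\Omega}S(t,\cdot)\ge Nl_0$ given by Lemma \ref{lem4.1} to a uniform pointwise lower bound, via a parabolic comparison argument coupled with the Harnack inequality of Lemma \ref{lemma_harnack}.

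First, I would observe that since $\frac{\beta I}{m+S+I}\le \|\beta\|_{\infty}$ and $\gamma I\ge 0$, the susceptible density satisfies the differential inequality
$$\p_tS-d_S\Delta S+\|\beta\|_{\infty}S\ge 0\quad\text{in }\Omega\times(0,\infty),\qquad \p_\nu S=0\text{ on }\p\Omega.$$
Fix any $t_1>0$ and let $v(t,x)$ be the unique Neumann solution of the linear problem
$$\p_tv=d_S\Delta v-\|\beta\|_{\infty}v,\quad \p_\nu v=0,\quad v(t_1,\cdot)=S(t_1,\cdot)\text{ on }\Omega\times[t_1,\infty).$$
Then $w:=S-v$ satisfies $\p_tw-d_S\Delta w+\|\beta\|_{\infty}w\ge 0$, $\p_\nu w=0$ and $w(t_1,\cdot)\equiv 0$, so the parabolic maximum principle forces $S(t,\cdot)\ge v(t,\cdot)\ge 0$ for $t\ge t_1$. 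Integrating the equation for $v$ over $\Omega$ yields the exact identity
$$\int_{\Omega}v(t,\cdot)=e^{-\|\beta\|_{\infty}(t-t_1)}\int_{\Omega}S(t_1,\cdot),\qquad t\ge t_1.$$

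Next, by Lemma \ref{lem4.1} I choose $T_0>0$ such that $\int_{\Omega}S(t_1,\cdot)\ge \tfrac{1}{2}Nl_0$ for all $t_1\ge T_0$. The gauged function $\tilde v(t,x):=e^{\|\beta\|_{\infty}t}v(t,x)$ solves the pure heat equation $\p_t\tilde v=d_S\Delta\tilde v$ with Neumann boundary condition, so Lemma \ref{lemma_harnack} applied on $[t_1,t_1+1]$ (with $\delta=1$ and $a\equiv 0$) delivers a constant $C=C(\Omega,d_S)>0$, \emph{independent of $m$, $N$ and the initial data}, satisfying
$$\max_{x\in\bar\Omega}v(t_1+1,x)\le C\min_{x\in\bar\Omega}v(t_1+1,x).$$
Combining this with the integral identity above gives, for every $t_1\ge T_0$,
$$\min_{x\in\bar\Omega}S(t_1+1,x)\ge \min_{x\in\bar\Omega}v(t_1+1,x)\ge \frac{1}{C|\Omega|}\int_{\Omega}v(t_1+1,\cdot)\ge \frac{e^{-\|\beta\|_{\infty}}Nl_0}{2C|\Omega|}.$$
Since $t_1\ge T_0$ is arbitrary, \eqref{Eqd-3} follows with $l_1:=e^{-\|\beta\|_{\infty}}l_0/(2C|\Omega|)$, which depends only on $\Omega$, $d_S$, $\|\beta\|_{\infty}$ and $\gamma_{\min}$.

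The main delicacy is to keep the Harnack constant independent of $m$, $N$ and initial data. This is why I compare $S$ against the \emph{$m$- and $N$-independent} linear equation with coefficient $-\|\beta\|_{\infty}$ rather than, say, the true linearization $\p_tS-d_S\Delta S+\tfrac{\beta I}{m+S+I}S=\gamma I$ (whose zero-order coefficient depends on $I$, and hence on $N$ and $m$ through the dynamics). After this gauge is chosen, Lemma \ref{lemma_harnack} applies to a parameter-free heat equation and the desired uniformity is transparent.
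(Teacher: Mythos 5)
Your proposal is correct and follows essentially the same route as the paper: the authors likewise derive the differential inequality $\p_tS\ge d_S\Delta S-\|\beta\|_{\infty}S$, compare $S(t+1,\cdot)$ with $e^{-\|\beta\|_{\infty}}e^{d_S\Delta}S(t,\cdot)$, and then use the Harnack inequality for the Neumann heat semigroup together with its mass conservation and Lemma \ref{lem4.1} to obtain the uniform lower bound. Your explicit gauge $\tilde v=e^{\|\beta\|_{\infty}t}v$ and the intermediate comparison function $v$ are just a slightly more spelled-out version of the same computation, with the same resulting constant up to the harmless factor $1/2$.
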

 \begin{proof}Let $l_0>0$ be given by \eqref{Eqd-1}. Observe from  \eqref{dI=0-model} that 
 $$
\p_tS\ge d_S\Delta S-\|\beta\|_{\infty}S,\ x\in\Omega,\ t>0; \quad \partial_{\nu}S=0,\ x\in\partial\Omega,\ t>0.
 $$
 Denoting by $\{e^{t\Delta}\}_{t\ge 0}$, the analytical $C_0$-semigroup on $L^p(\Omega)$ $(p\ge1)$, generated by the Laplace operator $\Delta$ subject to the homogeneous Neumann boundary conditions on $\partial\Omega$. Then appealing to the comparison principle for parabolic equations, we can infer that
 \begin{equation}\label{Eqd-2}
     S(t+1,\cdot)\ge e^{-\|\beta\|_{\infty}} e^{d_S\Delta}S(t,\cdot), \quad\, t>0.
 \end{equation}
 By the Harnack's inequality for parabolic equations \cite{huska2006harnack}, there exists   $M_1>0$ such that 
 \begin{equation*}
     \big\|e^{t\Delta}u\big\|_{\infty}\le M_1 \min_{x\in\bar{\Omega}}(e^{t\Delta}u)(x),\quad \forall\,t\ge d_S,\ \forall \,u\in C^+(\bar{\Omega}).
 \end{equation*}
 It then follows from \eqref{Eqd-2} that 
 \begin{align*}
 S(t+1,\cdot)\ge& \frac{e^{-\|\beta\|_{\infty}}}{M_1} \|e^{d_S\Delta}S(t,\cdot)\|_{\infty}\cr
 \ge&\frac{e^{-\|\beta\|_{\infty}}}{|\Omega|M_1} \int_{\Omega}e^{d_S\Delta}S(t,\cdot) \cr
 =&\frac{e^{-\|\beta\|_{\infty}}}{|\Omega|M_1} \int_{\Omega}S(t,\cdot), \quad t>0.
 \end{align*}
 Therefore,
$$\liminf_{t\to\infty}\min_{x\in\bar{\Omega}}S(t,x)\ge \frac{e^{-\|\beta\|_{\infty}}}{|\Omega|M_1}\liminf_{t\to\infty}\int_{\Omega}S(t,\cdot)\ge \frac{Nl_0e^{-\|\beta\|_{\infty}}}{|\Omega|M_1}=:Nl_1,$$
where $l_0>0$ is given as in \eqref{Eqd-1}.
\end{proof}

 We then establish the eventual uniform boundedness of solutions of \eqref{dI=0-model}.

 \begin{prop}\label{prop3.2} Let $(S,I)$ be the solution of \eqref{dI=0-model}. Then there exists a positive number $l_2>0$, independent of initial data and $m$,  such that 
 \begin{equation}\label{eventual-bonds}
     \limsup_{t\to\infty}(\|S(t,\cdot)\|_{\infty}+\|I(t,\cdot)\|_{\infty})\le l_2.
 \end{equation}
 \end{prop}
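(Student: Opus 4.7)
\emph{Strategy.} I propose a two-step bootstrap: pointwise ODE control of $I$ by $S$, then parabolic-regularity control of $S$ via the total-mass conservation, closed by a Moser-type iteration through $L^p$-bounds.

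\emph{ODE bound on $I$.} Since $I\cdot\frac{\beta S}{m+S+I}\le\|\beta\|_\infty S$, the $I$-equation in \eqref{dI=0-model} gives the pointwise bound $\partial_tI\le\|\beta\|_\infty S-\gamma_{\min}I$, so the variation-of-constants formula yields
\[
I(t,x)\le I_0(x)e^{-\gamma_{\min}t}+\|\beta\|_\infty\int_0^te^{-\gamma_{\min}(t-s)}S(s,x)\,ds,\qquad t\ge 0,\ x\in\bar\Omega,
\]
and hence $\limsup_{t\to\infty}\|I(t,\cdot)\|_\infty\le(\|\beta\|_\infty/\gamma_{\min})\limsup_{t\to\infty}\|S(t,\cdot)\|_\infty$. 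Multiplying the $I$-equation by $I^{p-1}$, integrating, and absorbing the cross term via Young's inequality upgrades this to an $L^p$ version for every $p\ge 1$: a uniform-in-time $L^p$ bound on $S$ forces one on $I$.

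\emph{Parabolic bound on $S$ and iteration.} Writing the $S$-equation as $\partial_tS-d_S\Delta S=F$ with $F=\gamma I-\frac{\beta SI}{m+S+I}$, we have $|F|\le(\|\beta\|_\infty+\|\gamma\|_\infty)I$, so the conservation $\int_\Omega(S+I)=N$ gives $\|F(s,\cdot)\|_{L^1(\Omega)}\le(\|\beta\|_\infty+\|\gamma\|_\infty)N$ for all $s\ge 0$. Combining the variation-of-parameters formula for $S$ with the Neumann heat-semigroup smoothing
\[
\|e^{rd_S\Delta}u\|_{L^q(\Omega)}\le C(\Omega,d_S)\bigl(1+r^{-(n/2)(1/p-1/q)}\bigr)\|u\|_{L^p(\Omega)},\qquad 1\le p\le q,
\]
propagates any uniform $L^p$-bound on $F$ (equivalently on $I$) to a uniform $L^q$-bound on $S$ whenever $1/p-1/q<2/n$. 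Starting from $\|I\|_{L^1}\le N$ and alternating the two steps with, say, $1/q_{k+1}=\max\{0,1/q_k-1/n\}$, one reaches $q_k=\infty$ in at most $n$ rounds, which yields $\limsup_{t\to\infty}\|S(t,\cdot)\|_\infty<\infty$ and, by the ODE step, $\limsup_{t\to\infty}\|I(t,\cdot)\|_\infty<\infty$, with constants depending only on $N,\Omega,d_S,\beta,\gamma$.

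\emph{Main obstacle.} The constant $l_2$ must be independent of both the initial data and $m$, which forces us to discard the saturation term on the favorable side (using only $\frac{\beta SI}{m+S+I}\ge 0$) and to bound it from above by $\|\beta\|_\infty\min\{S,I\}$, never exploiting any lower bound on $m$. A further technical hurdle is that for $n\ge 2$ the one-shot jump $L^1\to L^\infty$ via the heat kernel fails because $r^{-n/2}$ is not integrable at zero, so the intermediate-$L^p$ bootstrap is genuinely necessary; the delicate verification is that the gain $1/q_k-1/q_{k+1}$ can be taken bounded below by a positive constant independent of $k$ (and of $m$ and the initial data), so that finitely many iterations close the loop and produce the explicit $l_2$.
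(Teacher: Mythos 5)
Your proposal is correct, and it reaches \eqref{eventual-bonds} by a genuinely different route for the core step. The paper also runs an $L^p$ bootstrap, but it obtains the intermediate $L^{p}$ bounds by an energy method: it defines $p_\infty$ as the supremum of exponents for which an eventual $L^p$ bound independent of initial data and $m$ holds, multiplies the $S$-equation by $(p+1)S^p$ and differentiates $\int_\Omega I^{p+1}$, absorbs the cross terms by Young's inequality, and then uses the Gagliardo--Nirenberg inequality on $S^{(p+1)/2}$ to convert the dissipation $-\int_\Omega|\nabla S^{(p+1)/2}|^2$ into a negative multiple of $\int_\Omega S^{p+1}$, yielding a contradiction with the maximality of $p_\infty$ (the exponent always improves by at least $1/2$). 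You instead treat the $S$-equation by Duhamel's formula and the $L^p$--$L^q$ smoothing of the Neumann heat semigroup, gaining roughly $2/n$ in $1/q$ per round, and close the loop with the pointwise ODE bound $\partial_tI\le\|\beta\|_\infty S-\gamma_{\min}I$ (the paper's \eqref{xxb-02} is the integral version of the same observation). Both arguments use only $0\le \beta SI/(m+S+I)\le\|\beta\|_\infty\min\{S,I\}$, so both give constants independent of $m$ and of the initial data with total mass $N$; and both finish identically, namely with the $L^p\to L^\infty$ semigroup estimate for $S$ (for $p>n/2$, the paper takes $p>n$) followed by an ODE comparison for $I$. Your semigroup bootstrap is arguably more elementary (no Gagliardo--Nirenberg, no extremal-exponent contradiction) and makes the finite number of iterations explicit; the paper's energy route has the mild advantage of not requiring the variation-of-constants representation until the very last step and of producing the quantitative recursion $p\mapsto p+1$ directly. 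Your closing remarks correctly identify the two genuine delicacies: the saturation term must only ever be discarded or bounded by $\|\beta\|_\infty\min\{S,I\}$, and for $n\ge2$ the one-shot $L^1\to L^\infty$ jump fails, so the intermediate exponents are unavoidable.
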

\begin{proof} By \eqref{total-population}, the  quantity
\begin{align*}
p_{\infty}:=\sup\Big\{ p\in[1,\infty) & :\ \text{there is }\, N_p>0, \, \text{ independent of initial data and }  m,\, \text{ such that}\, \cr
&\ \ \ \limsup_{t\to\infty}(\|S(t,\cdot)\|_{L^p(\Omega)}+\|I(t,\cdot)\|_{L^p(\Omega)})\le N_p \Big\}
\end{align*}
is well defined. 

We now claim that $p_{\infty}=\infty$. Indeed, suppose to the contrary that $p_{\infty}<\infty$. Set 
\begin{equation}\label{choice-p}
p:=\max\big\{p_{\infty}-\frac{1}{2},\ 1\big\}.
\end{equation}
By the definition of $p_{\infty}$, we can find $N_p$, independent of initial data, and $t_p\gg 1$, such that 
\begin{equation}\label{Np}
\|S(t,\cdot)\|_{L^p(\Omega)}+\|I(t,\cdot)\|_{L^p(\Omega)} \le 2N_p,\quad \forall\, t\ge t_p.
\end{equation}
 Multiplying  the equation of $S(t,x)$ by $(p+1)S^p$, integrating by parts and using Young's inequality, we obtain that
\begin{align}\label{xxb-01}
    \frac{d}{dt}\int_{\Omega}S^{p+1}=& -\frac{4pd_S}{(p+1)}\int_{\Omega}|\nabla S^{\frac{p+1}{2}}|^2 +(p+1)\int_{\Omega}\gamma S^{p}I-(p+1)\int_{\Omega}\beta \frac{S}{S+I+m}IS^{p}\cr
    \le& -\frac{4pd_S}{(p+1)}\int_{\Omega}|\nabla S^{\frac{p+1}{2}}|^2 +\|\gamma\|_{\infty}\left[ p\varepsilon\int_{\Omega}S^{p+1}+\frac{1}{\varepsilon^p}\int_{\Omega}I^{p+1}\right],\ \ t>0,
\end{align}
 for every $\varepsilon>0$. 
 
 Similarly, from the equation of $I$, we get
\begin{align}\label{xxb-02}
    \frac{d}{dt}\int_{\Omega}I^{p+1}=& (p+1)\int_{\Omega}\beta\frac{I}{m+S+I}SI^p-(p+1)\int_{\Omega}\gamma I^{p+1}\cr
    \le & (p+1)\|\beta\|_{\infty}\int_{\Omega}SI^p -(p+1){\gamma}_{\min}\int_{\Omega}I^{p+1}\cr
    \le & \|\beta\|_{\infty}\left[p\tilde{\varepsilon}\int_{\Omega}I^{p+1}+\frac{1}{\tilde{\varepsilon}^p}\int_{\Omega}S^{p+1} \right] -(p+1){\gamma}_{\min}\int_{\Omega}I^{p+1}\cr
    = &\frac{ \|\beta\|_{\infty}}{\tilde{\varepsilon}^p}\int_{\Omega}S^{p+1}-(p+1)\Big({\gamma}_{\min}-\frac{p\tilde{\varepsilon}\|\beta\|_{\infty}}{p+1} \Big)\int_{\Omega}I^{p+1},\ \ t>0,
\end{align}
for every $\tilde{\varepsilon}>0$. 

Hence,  a combination of \eqref{xxb-01} and \eqref{xxb-02} yields
\begin{align*}
    \frac{1}{dt}\left[\int_{\Omega}( S^{p+1}+I^{p+1}) \right]\le & -\frac{4d_Sp}{p+1}\int_{\Omega}|\nabla S^{\frac{p+1}{2}}|^2 +\Big(p\varepsilon\|\gamma\|_{\infty}+\frac{\|\beta\|_{\infty}}{\tilde{\varepsilon}^p}\Big)\int_{\Omega}S^{p+1}\cr
    & -(p+1)\Big({\gamma}_{\min}-\frac{p\tilde{\varepsilon}\|\beta\|_{\infty}+\|\gamma\|_{\infty}\varepsilon^{-p}}{p+1}\Big)\int_{\Omega}I^{p+1}
\end{align*}
for all $\varepsilon,\tilde{\varepsilon}>0$ and  $t>0$.
By taking 
$$ 
\tilde{\varepsilon}_p:=\tilde{\varepsilon}=\frac{(p+1){\gamma}_{\min}}{4p\|\beta\|_{\infty}} \quad \text{and}\quad \varepsilon_p:=\varepsilon=\left[\frac{4\|\gamma\|_{\infty}}{(p+1){\gamma_{\min}}}\right]^{\frac{1}{p}},
$$
the last inequality becomes
\begin{align}\label{xxb-03}
    &\frac{1}{dt}\left[\int_{\Omega}( S^{p+1}+I^{p+1}) \right]\cr
    \le & -\frac{4pd_S}{p+1}\int_{\Omega}|\nabla S^{\frac{p+1}{2}}|^2 +\Big(p\varepsilon_p\|\gamma\|_{\infty}+\frac{\|\beta\|_{\infty}}{\tilde{\varepsilon}_p^p}\Big)\int_{\Omega}S^{p+1} -\frac{(p+1)\gamma_{\min}}{2}\int_{\Omega}I^{p+1}\cr
    =& -a_{1,p}\int_{\Omega}|\nabla S^{\frac{p+1}{2}}|^2 +a_{2,p}\int_{\Omega}S^{p+1} -a_{3,p}\int_{\Omega}I^{p+1},\ \ t>0,
\end{align}
where 
$$
a_{1,p}=\frac{4pd_S}{p+1},\ \ a_{2,p}=\big(p\varepsilon_p\|\gamma\|_{\infty}+\frac{\|\beta\|_{\infty}}{\tilde{\varepsilon}_p^p}\big)\ \  \text{and} \ a_{3,p}=\frac{(p+1){\gamma_{\min}}}{2}.
$$

Recall the well-known Gagliardo-Nirenberg inequality: there a positive constant $C_{p}$  such that 
\begin{equation}\label{xxb-04}
    \|u\|_{L^2(\Omega)}\leq C_{p}\||\nabla u|\|_{L^{2}(\Omega)}^{\theta_p}\|u\|_{L^{\frac{2p}{p+1}}(\Omega)}^{1-\theta_p}+C_{p}\|u\|_{L^{\frac{2}{p+1}}(\Omega)},\ \ \forall u\in W^{1,2}(\Omega),
\end{equation}
where $\theta_p=\frac{n}{n+2p}$.
Substituting $u=S^{\frac{p+1}{2}}$ in \eqref{xxb-04}, and using again Young's inequality and \eqref{Np}, we get for $t\ge t_p$,
\begin{align}\label{xxb-06}
    \int_{\Omega}S^{p+1}\le & \left[ C_{p}\||\nabla S^{\frac{p+1}{2}}|\|_{L^2(\Omega)}^{\frac{\theta_p}{2}}\|S\|_{L^p(\Omega)}^{\frac{(1-\theta_p)(p+1)}{2}}+C_{p}\|S\|_{L^{1}(\Omega)}^{\frac{p+1}{2}} \right]^2\cr
    \le & 2C_{p}^2\||\nabla S^{\frac{p+1}{2}}|\|_{L^2(\Omega)}^{\theta_p}\|S\|_{L^p(\Omega)}^{(1-\theta_p)(p+1)}+2C_{p}^2\|S\|_{L^{1}(\Omega)}^{p+1}\cr
    \le & \hat{\varepsilon}\||\nabla S^{\frac{p+1}{2}}|\|_{L^2(\Omega)}^{2} +\frac{(2-\theta_p)}{2}\left[2C_{p}^2\Big(\frac{\theta_p}{2\hat{\varepsilon}}\Big)^{\frac{2}{\theta_p}}\right]^{\frac{2}{2-\theta_p}}\|S\|_{L^p(\Omega)}^{\frac{2(1-\theta_p)(p+1)}{2-\theta_p}}+2C_{p}^2N^{p+1}\cr
    \leq & \hat{\varepsilon}\||\nabla S^{\frac{p+1}{2}}|\|_{L^2(\Omega)}^{2} +b_{\hat{\varepsilon},p}, \ \ t>0,
\end{align}
for every $\hat{\varepsilon}>0$, where $$
b_{\hat{\varepsilon},p}:=\frac{(2-\theta_p)}{2}\left[2C_{p}^2
\Big(\frac{\theta_p}{2\hat{\varepsilon}}\Big)^{\frac{2}{\theta_p}}\right]^{\frac{2}{2-\theta_p}}(2N_p)^{\frac{2(1-\theta_p)(p+1)}{2-\theta_p}}
+2C_{p}^2N^{p+1}.
$$ 
Multiplying \eqref{xxb-06} by $\frac{a_{1,p}}{\hat{\varepsilon}}$ and rearranging the terms yield
\begin{equation*}
    -a_{1,p}\int_{\Omega}|\nabla S^{\frac{p+1}{2}}|^2\leq -\frac{a_{1,p}}{\hat{\varepsilon}}\int_{\Omega}S^{p+1} +\frac{a_{1,p}b_{\hat{\varepsilon},p}}{\hat{\varepsilon}}, \ \ t>0.
\end{equation*}
As a result, we obtain from \eqref{xxb-03} that 
\begin{equation*}
    \frac{1}{dt}\left[\int_{\Omega}(S^{p+1}+I^{p+1})\right]\leq -\frac{a_{2,p}}{\hat{\varepsilon}}\big( \frac{a_{1,p}}{a_{2,p}}-\hat{\varepsilon}\big)\int_{\Omega}S^{p+1}-a_{3,p}\int_{\Omega}I^{p+1} +\frac{a_{1,p}b_{\hat{\varepsilon},p}}{\hat{\varepsilon}}, \ \ t>0.
\end{equation*}
Finally, by taking $\hat{\varepsilon}=\frac{a_{1,p}}{2a_{2,p}}$, it follows from the last inequality that
\begin{equation*}
    \frac{1}{dt}\left[\int_{\Omega}(S^{p+1}+I^{p+1})\right]\leq -\min\Big\{a_{2,p},a_{3,p}\Big\}\left[\int_{\Omega}S^{p+1}+I^{p+1}\right] +2a_{2,p}b_{\hat{\varepsilon}_p,p}, \ \ t>0,
\end{equation*}
which implies that 
$$ 
\limsup_{t\to\infty}(\|S(t)\|_{L^{p+1}(\Omega)}^{p+1}+\|I(t)\|_{L^{p+1}(\Omega)}^{p+1})\leq  \frac{2a_{2,p}b_{\hat{\varepsilon}_p,p}}{\min\{a_{2,p},a_{3,p}\}}.
$$
Recalling the definition of $p_\infty$, the above inequality implies that $p_\infty\ge p+1\ge p_\infty-\frac{1}{2}+1$, which is a contradiction. So we must have $p_\infty=\infty$. 

Next,  we  proceed to establish the eventually uniform boundedness of $S(t,\cdot)$ in $C(\bar{\Omega})$. First, fix $p>n$ and let $N_p>0$, independent of initial data and $m$, such that 
$$
\limsup_{t\to\infty}(\|S(t,\cdot)\|_{L^p(\Omega)}+\|I(t,\cdot)\|_{L^p(\Omega)})\le N_p.
$$
 By the $L_p$--$L_q$-estimates for the heat semigroup $\{e^{t\Delta}\}_{t\ge 0}$ subject to the homogeneous Neumann boundary conditions, there is a positive number $C>0$ such that 
$$
\|e^{t\Delta }u\|_{L^{\infty}(\Omega)}\le Ct^{-\frac{n}{2p}}\|u\|_{L^p(\Omega)},\quad \forall\, 0<t\le 1,\,u\in L^p(\Omega).
$$
Therefore, applying the variation of constant formula, from the first equation of \eqref{dI=0-model} and the positivity of $\{e^{t\Delta}\}_{t\ge1}$, we deduce 
\begin{align*}
   0<  S(1+t,\cdot)=& e^{d_S\Delta}S(t,\cdot)+\int_{0}^1e^{(1-s)d_S\Delta}\Big[\gamma I(t+s,\cdot)-\frac{\beta S(t+s,\cdot)I(t+s,\cdot)}{m+S(t+s,\cdot)+I(t+s,\cdot)}\Big]ds
    \cr
    \le & e^{d_S\Delta}S(t,\cdot)+\int_{0}^1e^{(1-s)d_S\Delta}(\gamma I(t+s,\cdot))ds\cr
    \le & C\|S(t,\cdot)\|_{L^p(\Omega)}+C\|\gamma\|_{\infty}d_S^{-\frac{n}{2p}}\int_{0}^1{(1-s)}^{-\frac{n}{2p}}\|I(t+s,\cdot)\|_{L^p(\Omega)}ds,\ \ t>0.
\end{align*}
As a result,
\begin{equation*}
    \limsup_{t\to\infty}\|S(t,\cdot)\|_{\infty}\le C\Big[1+\|\gamma\|_{\infty}d_S^{-\frac{n}{2p}}\int_0^1(1-s)^{-\frac{n}{2p}}ds\Big]N_p=:N_\infty<\infty.
\end{equation*}
So there is $t_0>0$ such that 
$$
\|S(t,\cdot)\|_{\infty}\le 2N_\infty,\quad \forall\, t\ge t_0.
$$
Thus, we have 
\begin{align*}
    \p_tI=&\Big(\frac{\beta S}{m+S+I}-\gamma\Big)I\cr
    \le& \Big(\frac{2\beta N_\infty}{2N_\infty+I}-\gamma\Big)I \cr
    =& [2N_\infty(\beta-\gamma)-\gamma I]\frac{I}{2N_\infty+I}\cr
    \le & (2N_\infty\|\beta\|_{\infty}-\gamma_{\min} I)\frac{I}{2N_\infty+I},\quad \forall\, t\ge t_0.
\end{align*}
It follows from the comparison principle for ODEs that 
$$
\limsup_{t\to\infty}\|I(t,\cdot)\|_{\infty}\le \frac{2N_\infty\|\beta\|_{\infty}}{\gamma_{\min}}.
$$
Therefore, \eqref{eventual-bonds} holds with $l_2:=\max\{N_\infty, \frac{2N_\infty\|\beta\|_{\infty}}{\gamma_{\min}}\}$, which is independent of initial data and the saturation function $m$. 
\end{proof}

\vskip10pt
Theorem \ref{th2.4} is a combination of Propositions \ref{prop3.1} and \ref{prop3.2}. We now present the proof of Theorem \ref{th2.5}.

\begin{proof}[Proof of Theorem \ref{th2.5}] First, for fixed $x\in\bar{\Omega}$ such that $m(x)=0$, we have  
$$
\p_tI(t,x)=\Big[\frac{\beta(x)S}{S+I}-\gamma(x)\Big]I(t,x)
=\Big[\frac{(\beta(x)-\gamma({x}))}{\gamma(x)}S(t,x)-I(t,x)\Big]\frac{\gamma(x)I(t,x)}{S(t,x)+I(t,x)},\quad \forall\, t>0.
$$
It then follows from \eqref{Eqd-3} and the comparison principle for ODEs that 
$$
\liminf_{t\to\infty}I(t,x)\ge \frac{(\beta(x)-\gamma(x))l_1}{\gamma(x)}>0,
$$
where $l_1>0$ is as in \eqref{Eqd-3}. This proves \eqref{th2.5-eq2}. 

We introduce the transformation:
$$
Z(t,x)=\frac{\gamma(x)(m(x)+I(t,x))}{\beta(x)-\gamma(x)},\quad t\ge 0,\, x\in \bar{\Omega}.
$$
Then, according to \eqref{dI=0-model}, $(S,Z)$ satisfies the cooperative system  
\begin{equation}
\begin{cases}
S_t=d_S\Delta S+\frac{\beta-\gamma}{\gamma}(Z-S)G(t,x), & x\in\Omega, t>0,\cr
Z_t=(S-Z)G(t,x), & x\in\bar{\Omega},\ \forall\ t>0,\cr
\partial_{\nu}S=0, & x\in\partial\Omega,\ t>0,
\end{cases}
\end{equation}
where 
$$
G(t,x)=\frac{\gamma(x) I(t,x)}{m(x)+S(t,x)+I(t,x)}>0,\  \ t>0,\,x\in\bar\Omega.
$$
One can check that
\begin{align*}
    \frac{d}{dt}\Big[\frac{1}{2}\int_{\Omega}S^2+\int_{\Omega}\frac{(\beta-\gamma)Z^2}{2\gamma}\Big]=-d_S\int_{\Omega}|\nabla S|^2-\int_{\Omega}(\beta-\gamma)\frac{(S-Z)^2I}{m+S+I},\quad t>0.
\end{align*}
Due to Proposition \ref{prop3.2}, an integration of the last equation on $(0, \infty)$ gives
\begin{equation}\label{Eqd-5}
\int_{0}^{\infty}\Bigg[d_S\int_{\Omega}|\nabla S|^2+\int_{\Omega}(\beta-\gamma)\frac{(S-Z)^2I}{m+S+I}\Bigg]<\infty.
\end{equation}
By Proposition \ref{prop3.2}, we have $\sup_{t\ge 0}\|(\beta S(t,\cdot)/(m+S+I)-\gamma)I\|_{\infty}<\infty$. By the regularity theory for parabolic equations  applied to the equation of $S$ in \eqref{dI=0-model}, the mapping $[1,\infty)\ni t\mapsto S(t,\cdot)\in C^{1}(\bar{\Omega})$ is H\"older continuous. So, $\|\nabla S(t,\cdot)\|_{L^2(\Omega)}$ is H\"older's continuous on $[1,\infty)$.   In view of these, we can conclude from \eqref{Eqd-5} that 
\begin{equation}\label{Eqd-7}
    \lim_{t\to\infty}\|\nabla S(t,\cdot)\|_{L^2(\Omega)}=0.
\end{equation}
Thanks to \eqref{Eqd-7}, the Poincaré inequality, and the regularity of \( S(t, \cdot) \), we can proceed in a similar manner to prove \eqref{Eqa-2} and establish that
\begin{equation}\label{Eqd-9}
    \lim_{t\to\infty}\Big\|S(t,\cdot)-\frac{1}{|\Omega|}\int_{\Omega}S(t,\cdot)\Big\|_{C^1(\bar{\Omega})}=0,
\end{equation}
 which proves \eqref{th2.5-eq1}.
 
Define the function $H: \Omega\to (0,\infty]$  by
$$
H(x):=\int_{0}^{\infty}\frac{[\beta(x)-\gamma(x)][S(t,x)-Z(t,x)]^2I(t,x)}{m(x)+S(t,x)+I(t,x)},\quad x\in\Omega.
$$
It follows  from \eqref{Eqd-5} and Fubini's Theorem  that 
$$
\int_{\Omega}H=\int_{\Omega}\int_0^{\infty}(\beta-\gamma)\frac{(S-Z)^2I}{m+S+I}
=\int_0^{\infty}\int_{\Omega}\frac{(\beta-\gamma)(S-Z)^2I}{m+S+I}<\infty.
$$
Therefore, there is a measurable set $\tilde{\Omega}\subset \Omega$  with null measure such that 
\begin{equation}\label{HH}
H(x)=\int_{0}^{\infty}(\beta-\gamma)\frac{(S-Z)^2I}{m+S+I}<\infty, \quad \forall\, x\in\Omega\setminus\tilde{\Omega}.
\end{equation}
By Proposition \ref{prop3.2} and the equation of $I$ in \eqref{dI=0-model}, $\|\partial_t I\|_\infty$ is uniformly bounded for $t\in (0, \infty)$.  So, the function $[0,\infty)\ni I(t,\cdot)\in C(\bar{\Omega})$ is Lipschitz continuous.  Note that the mapping $[1,\infty)\ni S(t,\cdot)\in C(\bar{\Omega})$ is H\"older continuous. So for each $x\in\bar\Omega$, the mapping
$$
[0,\infty)\ni(\beta(x)-\gamma(x))\frac{(S(t,x)-Z(t,x))^2I(t,x)}{m(x)+S(t,x)+I(t,x)}
$$
is H\"older continuous. By \eqref{HH}, Lemma \ref{lem0}, and Proposition \ref{prop3.1},
\begin{equation}\label{Eqd-10}
\lim_{t\to\infty}(S(t,x)-Z(t,x))^2I(t,x)=0, \quad \mbox{a.e. in}\  \Omega.
\end{equation} 
 
\vskip10pt
In the sequel,  we are going to prove the assertion {\rm (i)}.

{\bf Case 1:} Suppose that $\frac{|\Omega|m\gamma}{\beta-\gamma}<\min\Big\{{N},N+\int_{\{I_0>0\}}m
-\frac{N}{|\Omega|}\int_{\{I_0>0\}}\frac{(\beta-\gamma)}{\gamma}\Big\}$. 
 
To the end, let us introduce the quantities
\begin{equation*}
    \underline{c}^*:=\liminf_{t\to\infty}\int_{\Omega}I(t,\cdot) \quad \text{and}\quad \bar{c}^*=\limsup_{t\to\infty}\int_{\Omega}I(t,\cdot).
\end{equation*}
By \eqref{total-population} and \eqref{Eqd-9}, for every $\varepsilon>0$, there is $t_{\varepsilon}\gg 1$ such that 
\begin{equation}\label{ineq-s}
\frac{(N-\bar{c}^*-\varepsilon)_+}{|\Omega|}< S(t,\cdot)<\frac{(N-\underline{c}^*)+\varepsilon}{|\Omega|},\quad \forall\, t\ge t_{\varepsilon}.
\end{equation}

Recall that if $I_0(x_0)=0$ for some $x_0\in\bar\Omega$, then $I(t, x_0)=0$ for all $t\ge 0$, and if $I_0(x_0)>0$ for some $x_0\in\bar\Omega$, then $I(t, x_0)>0$ for all $t\ge 0$. Thus, to determine the limit of $I$ as $t\to\infty$, it suffices to work on the set $\{I_0>0\}:=\{x\in\bar\Omega:\ I_0(x)>0\}$.

From \eqref{ineq-s} it follows that
$$
\p_tI\le\left[\frac{\beta \frac{(N-\underline{c}^*+\varepsilon)}{|\Omega|}  }{m+\frac{(N-\bar{c}^*+\varepsilon)}{|\Omega|}+I}-\gamma\right]I,\ \quad \forall\, x\in\{I_0>0\},\, t\ge t_{\varepsilon}.  
$$
Hence, by a comparison principle for ODEs, we have that 
$$
\limsup_{t\to\infty}I(t,x)\le\left[\frac{(\beta(x)-\gamma(x))(N-\underline{c}^*+\varepsilon)}{|\Omega|\gamma(x)}-m(x)\right]_+, \quad \forall\, x\in\{I_0>0\}.
$$
Letting $\varepsilon\to 0^+$, we get 
\begin{equation}\label{Eqd-11}
    \limsup_{t\to\infty}I(t,x)\le {\left[\frac{(N-\underline{c}^*)}{|\Omega|}-\frac{m(x)\gamma(x)}{\beta(x)-\gamma(x)}\right]_+}\frac{\beta(x)-\gamma(x)}{\gamma(x)}, \quad \forall\, x\in\{I_0>0\}.
\end{equation}
Similarly, we can proceed to establish that 
\begin{equation}\label{Eqd-12}
    \liminf_{t\to\infty}I(t,x)\ge {\left[\frac{(N-\bar{c}^*)}{|\Omega|}-\frac{m(x)\gamma(x)}{\beta(x)-\gamma(x)}\right]_+}\frac{\beta(x)-\gamma(x)}{\gamma(x)}, \quad \forall\, x\in\{I_0>0\}.
\end{equation}
It is clear that \eqref{th2.5-eq3} follows from \eqref{Eqd-11}. 

Thanks to \eqref{Eqd-11}, Fatou Lemma, and the uniform boundedness of $\|I(t,\cdot)\|_{\infty}$, we have 
$$
\bar{c}^*=\limsup_{t\to\infty}\int_{\Omega}I(t,x)\le\int_{\Omega}\limsup_{t\to\infty}I(t,x)\le \int_{\{I_0>0\}}\Big[\frac{N(\beta-\gamma)}{|\Omega|\gamma}-m\Big]_+.
$$

Therefore,  by means of \eqref{Eqd-12}, we obtain that
\begin{align}\label{Eqd-10-aa}
    \liminf_{t\to\infty}I(t,x)\ge& \left[\frac{N-\int_{\{I_0>0\}}\Big(\frac{N(\beta-\gamma)}{|\Omega|\gamma}-m\Big)}{|\Omega|}-\frac{\gamma m}{\beta-\gamma}\right]\frac{\beta(x)-\gamma(x)}{\gamma(x)}\cr 
    =&\left[N+\int_{\{I_0>0\}}m-\frac{N}{|\Omega|}\int_{\{I_0>0\}}\frac{(\beta-\gamma)}{\gamma}
    -\frac{|\Omega|m\gamma}{\beta-\gamma}\right]\frac{\beta(x)-\gamma(x)}{|\Omega|\gamma(x)}\cr
    >&0, \quad \forall\, x\in\{I_0>0\}.
\end{align}

Then, we deduce from \eqref{Eqd-10-aa} and \eqref{Eqd-10}  that 
\begin{equation}\label{Eqd-15}
    \lim_{t\to\infty}(S(t,\cdot)-Z(t,\cdot))=0 \quad \text{a.e. in}\ \{I_0>0\}.
\end{equation}

Let $\{t_j\}_{j\ge 1}$ be a sequence of positive numbers converging to infinity. By \eqref{Eqd-9} and Proposition \ref{prop3.1}, possibly after passing to a subsequence, there is a positive number $\hat S$ such that $\|S(t_j,\cdot)-\hat S\|_{\infty}\to 0$ as $j\to\infty$. By the Lebesgue Dominated Convergence Theorem,  \eqref{Eqd-15}, and Proposition \ref{prop3.2}, we have 
$$
\mbox{$Z(t_j,\cdot)\to \hat S$\ \ as\ $j\to\infty$\ \ in\, $L^{p}(\{I_0>0\})$\ \ for all\ $p\ge 1$.}
$$
Hence
$$
I(t_j,\cdot)=\frac{\beta-\gamma}{\gamma}Z(t_j,\cdot)-m\to \frac{\beta-\gamma}{\gamma}\hat S-m \quad \text{as}\ j\to\infty, 
$$
in $L^p(\{I_0>0\})$ for any $p\ge 1$. Thus
\begin{align}\nonumber
    N=&\lim_{j\to\infty}\int_{\Omega}[S(t_j,\cdot)+I(t_j,\cdot)]
    =\lim_{j\to\infty}\int_{\Omega}S(t_j,\cdot)+\lim_{j\to\infty}\int_{\{I_0>0\}}I(t_j,\cdot)\cr
    =&|\Omega|\hat S+\int_{\{I_0>0\}}\Big(\frac{\beta-\gamma}{\gamma}\hat S-m\Big)\cr
    =&|\Omega|\hat S+\hat S\int_{\{I_0>0\}}\frac{\beta-\gamma}{\gamma}-\int_{\{I_0>0\}}m.
\end{align}

Solving  for $\hat S$, we get 
$$\hat S=\frac{N+\int_{\{I_0>0\}}m}{|\Omega|+\int_{\{I_0>0\}}\frac{\beta-\gamma}{\gamma}}.$$
Since $\hat S$ is independent of the chosen sequence, we conclude that 
$$
\|S(t,\cdot)-\hat S\|_{\infty}\to 0\ \  \text{as}\ t\to \infty.
$$
Furthermore, by the equation of $I$, 
$$
\Big\|I(t,\cdot)-\hat I(\cdot)\Big\|_{L^p(\Omega)}\to 0\ \ \text{as}\ t\to\infty
$$
for any $p\geq1$, where $\hat I$ is given in \eqref{th2.5}(i). This proves {\rm(i)} in Case 1.

\vskip5pt {\bf Case 2:} Suppose that $\frac{m\gamma}{\beta-\gamma}\Big(|\Omega|+\int_{\{I_0>0\}}\frac{\beta-\gamma}{\gamma}\Big)<N$.

In view of \eqref{Eqd-9} and Proposition \ref{prop3.1}, for any sequence $\{t_j'\}$ with $t_j'\to\infty$ as $j\to\infty$, we can find a subsequence (labeled by itself without loss of generality), and 
a positive number $S_*$ such that $\|S(t_j',\cdot)-S_*\|_{\infty}\to 0$ as $j\to\infty$. Here the positive constant $S_*$ may depend on the choice of the sequence $\{t_j'\}$.

Due to \eqref{Eqd-10}, we may assume that 
\begin{equation}\label{Eqd-15-a}
S(t_j',x)-Z(t_j',x)\to 0\ \ \ \mbox{for}\ x\in\Omega_*\subset\Omega\ \ \mbox{as}\ j\to\infty,
\end{equation}
and 
\begin{equation}\label{Eqd-15-b}
I(t_j',x)\to0\ \ \ \mbox{for}\ \  x\in\Omega\setminus\Omega_*\ \ \mbox{as}\ j\to\infty,
\end{equation}
where $\Omega_*$ may be an empty set and depend on the choice of the sequence $\{t_j'\}$.
Arguing as in Case 1, combined with Proposition \ref{prop3.2}, we deduce from \eqref{Eqd-15-a} that 
\begin{equation}\label{Eqd-15-c}
\mbox{$Z(t_j',\cdot)\to S_*$\ \ as\ $j\to\infty$\ \ in\, $L^{p}(\Omega_*)$\ \ for all\ $p\ge 1$.}
\end{equation}
Thus, it follows from \eqref{Eqd-15-c} that
\begin{equation}\label{Eqd-15-d}
I(t_j',\cdot)=\frac{\beta-\gamma}{\gamma}Z(t_j',\cdot)-m\to \frac{\beta-\gamma}{\gamma}S_*-m \quad \text{as}\ j\to\infty, 
\end{equation}
in $L^p(\{I_0>0\}\cap\Omega_*)$ for any $p\ge 1$.

Since $\int_{\Omega}[S(t_j',\cdot)+I(t_j',\cdot)]=N$ for each $j$, as in Case 1, using \eqref{Eqd-15-b} and \eqref{Eqd-15-d}, together with the fact of $I(t_j',x)=0$ for each $j$ and $x\in\{I_0=0\}$, we get
$$
S_*=\frac{N+\int_{\{I_0>0\}\cap\Omega_*}m}
{|\Omega|+\int_{\{I_0>0\}\cap\Omega_*}\frac{\beta-\gamma}{\gamma}}\geq
\frac{N}{|\Omega|+\int_{\{I_0>0\}}\frac{\beta-\gamma}{\gamma}}.
$$
Notice that $\frac{N}{|\Omega|+\int_{\{I_0>0\}}\frac{\beta-\gamma}{\gamma}}$ is independent of the choice of the sequence $\{t_j'\}$. This implies that for any given $\epsilon>0$, there is a large $t_{\epsilon,*}>0$ such that
$$
S(t,x)\geq \frac{N}{|\Omega|+\int_{\{I_0>0\}}\frac{\beta-\gamma}{\gamma}}-\epsilon,\ \ \ \forall t\geq t_{\epsilon,*},\,x\in\bar\Omega.
$$

As a result, we see from the $I$-equation that
$$
\p_tI\ge\left[\frac{\beta \Big(\frac{N}{|\Omega|+\int_{\{I_0>0\}}\frac{\beta-\gamma}{\gamma}}-\epsilon \Big) }{m+\frac{N}{|\Omega|+\int_{\{I_0>0\}}\frac{\beta-\gamma}{\gamma}}-\epsilon+I}-\gamma\right]I,\ \quad \forall\, x\in\{I_0>0\},\, t\ge t_{\epsilon,*}.  
$$
Then an application of the comparison principle for ODEs yields 
$$
\liminf_{t\to\infty}I(t,x)\ge\left[\frac{(\beta(x)-\gamma(x))}{\gamma(x)}\Big(\frac{N}{|\Omega|
+\int_{\{I_0>0\}}\frac{\beta-\gamma}{\gamma}}-\epsilon\Big)-m(x)\right]_+, \quad \forall\, x\in\{I_0>0\}.
$$
By sending $\varepsilon\to 0^+$, we get
\begin{equation}\label{Eqd-15-e}
\liminf_{t\to\infty}I(t,x)\ge\left[\frac{(\beta(x)-\gamma(x))}{\gamma(x)}\Big(\frac{N}{|\Omega|
+\int_{\{I_0>0\}}\frac{\beta-\gamma}{\gamma}}\Big)-m(x)\right]>0, \quad \forall\, x\in\{I_0>0\}
\end{equation}
due to our assumption. 

With the help of \eqref{Eqd-15-e}, we can use the same analysis as in Case 1 to conclude the desired assertion. 

{\rm(ii)} Suppose that $\frac{\gamma m}{\beta-\gamma}\ge \frac{N}{|\Omega|}$. It follows from \eqref{th2.5-eq3} that $I(t,\cdot)\to0$ uniformly on $\bar\Omega$ as $t\to\infty$. Hence $\int_{\Omega}S(t,x)dx\to N$ as $t\to\infty$. This along with \eqref{th2.5-eq1} yields {\rm (ii)}. 
\end{proof}

\vskip10pt
We are now in a position to prove Theorem \ref{th2.6}.

\begin{proof}[Proof of Theorem \ref{th2.6}] Thanks to Proposition \ref{prop3.2}, there is $L>0$ such that 
$$
\|S(t,\cdot)\|_{\infty}+\|I(t,\cdot)\|_{\infty}\le L.
$$
Fix $x\in\bar{\Omega}$ such that $ \beta(x)\le \gamma(x)$. Then 
$$
\p_tI(t,x)\le -\frac{\gamma I^2(t,x)}{m(x)+S(t,x)+I(t,x)}\le -\frac{\gamma I^2(t,x)}{m(x)+L},\quad  \forall\, t>0.
$$
Then by the comparison principle for ODEs, we obtain
\begin{equation}\label{Iine}
I(t,x)\le\frac{m(x)+L}{\frac{m(x)+L}{I_0(x)}+\gamma t}\to 0 \quad \text{as}\ t\to\infty. 
\end{equation}

{\rm (i)} Suppose that $\beta\le \gamma$ on $\bar\Omega$. Then, it follows from \eqref{Iine} that 
$$
\|I(t,\cdot)\|_{\infty}\to 0 \quad \mbox{as}\ t\to\infty.
$$
Hence, $\|(\beta S/(m+S+I)-\gamma)I(t,\cdot)\|_{\infty}\to 0$ and $\int_{\Omega}S=N-\int_{\Omega}I\to N$ as $t\to\infty$. By the first equation of \eqref{dI=0-model} and standard results on the large time behavior of solutions of the heat equations subject to the homogeneous boundary conditions, we can easily conclude that $\|S(t,\cdot)-\frac{N}{|\Omega|}\|_{\infty}\to 0$ as $t\to\infty$.

{\rm (ii)}  Let $l_1>0$ be the positive number given by \eqref{Eqd-3}, which is independent of $m$ and $N$. 
Let $K\subset H^+$ be a compact set and set 
$$
N_{*}:=\max_{x\in K}\frac{2m(x)\gamma(x)}{(\beta(x)-\gamma(x))l_1}.
$$
It is clear that $N_{*}=0$ if $m=0$ on $K$. Note also that $N_*$ is independent of initial data and $N$.

Thanks to \eqref{Eqd-3}, there is $t_1>0$ such that 
$$
S(t,x)\ge \frac{Nl_1}{2},\quad \forall\, x\in\bar{\Omega}, \,  t\ge t_1.
$$
Thus, for every $N>N_{*}$ and $x\in K$, there holds that 
\begin{align*}
\p_tI=&\Big[\frac{\beta S}{m+S+I}-\gamma\Big]I\cr 
\ge& \Big[\frac{\frac{\beta Nl_{1}}{2}}{m+\frac{Nl_1}{2}+I}-\gamma\Big]I\cr
=&\Big[\frac{(\beta-\gamma)l_1}{2\gamma}\Big(N-\frac{2m\gamma}{\beta-\gamma}\Big)-I\Big]\frac{\gamma I}{m+\frac{Nl_1}{2}+I}\cr
\ge& \Big\{(N-N_*)l_1\Big[\min_{x\in K}\frac{(\beta(x)-\gamma(x))}{2\gamma(x)}\Big]-I\Big\}\frac{\gamma I}{m+\frac{Nl_1}{2}+I},\quad t>t_1.
\end{align*}
It then follows from the comparison principle for ODEs that 
$$
\liminf_{t\to\infty}\min_{x\in K}I(t,x)\ge (N-N_*)l_1\Big[\min_{x\in K}\frac{(\beta(x)-\gamma(x))}{2\gamma(x)}\Big]>0,
$$
which yields the desired result.
\end{proof}

\vskip5pt
In order to establish Proposition \ref{prop3}, we need to prepare some preliminary results. The first one reads as follows.
\begin{lem}\label{lem4.2}  Fix $d_S>0$ and let $(S,I)$ be the solution of \eqref{dI=0-model}. Then for each $x\in\bar\Omega$,
 \begin{equation}
     ((R-1)Nl_1-m)_+\chi_{\{I_0>0\}}\le \liminf_{t\to\infty} I(t, x)\le \limsup_{t\to\infty} I(t, x)\le ((R-1)l_2-m)_+\chi_{\{I_0>0\}},
 \end{equation}
 where $l_1$ and $l_2$ are given in Proposition \ref{prop3.1} and Proposition \ref{prop3.2}, respectively.  
\end{lem}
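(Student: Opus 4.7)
Fix $x\in\bar{\Omega}$. If $I_0(x)=0$, then by Proposition \ref{theorem_di1} we have $I(t,x)=0$ for all $t\ge 0$, so both bounds are trivial because the right-hand sides contain $\chi_{\{I_0>0\}}$. Therefore the plan is to work only on the set $\{I_0>0\}$, where $I(t,x)>0$ for all $t\ge 0$.

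On $\{I_0>0\}$, the second equation of \eqref{dI=0-model} can be rewritten pointwise as the scalar non-autonomous ODE
\begin{equation*}
    \p_t I(t,x)=f\bigl(S(t,x),I(t,x)\bigr),\qquad f(s,y):=\frac{\gamma(x)\,y}{m(x)+s+y}\bigl[(R(x)-1)\,s-m(x)-y\bigr],
\end{equation*}
where $R(x)=\beta(x)/\gamma(x)$. The key algebraic observation is that $f$ is non-decreasing in its first argument: a direct computation gives
\begin{equation*}
    \p_s f(s,y)=\frac{\beta(x)\,y\,(m(x)+y)}{(m(x)+s+y)^2}\ge 0,\qquad s,y\ge 0,
\end{equation*}
which enables a one-sided comparison with autonomous logistic ODEs once $S(t,x)$ is trapped between the bounds provided by Propositions \ref{prop3.1} and \ref{prop3.2}.

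For the upper bound, fix $\varepsilon>0$ and use Proposition \ref{prop3.2} to pick $T_\varepsilon$ such that $S(t,x)\le l_2+\varepsilon$ for all $t\ge T_\varepsilon$. The monotonicity of $f$ in $s$ yields $\p_t I(t,x)\le f(l_2+\varepsilon,I(t,x))$ for $t\ge T_\varepsilon$. Scalar ODE comparison against the logistic equation $y'=f(l_2+\varepsilon,y)$, which has the globally attracting nonnegative equilibrium $\bigl((R(x)-1)(l_2+\varepsilon)-m(x)\bigr)_+$, gives
\begin{equation*}
    \limsup_{t\to\infty}I(t,x)\le \bigl((R(x)-1)(l_2+\varepsilon)-m(x)\bigr)_+.
\end{equation*}
Sending $\varepsilon\to 0^+$ yields the desired upper bound. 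The lower bound is symmetric: for $\varepsilon\in(0,Nl_1)$, Proposition \ref{prop3.1} supplies $T_\varepsilon'$ with $S(t,x)\ge Nl_1-\varepsilon$ for $t\ge T_\varepsilon'$, so $\p_t I(t,x)\ge f(Nl_1-\varepsilon,I(t,x))$. Since $I(T_\varepsilon',x)>0$, comparison with $y'=f(Nl_1-\varepsilon,y)$ gives
\begin{equation*}
    \liminf_{t\to\infty}I(t,x)\ge \bigl((R(x)-1)(Nl_1-\varepsilon)-m(x)\bigr)_+,
\end{equation*}
and letting $\varepsilon\to 0^+$ concludes the argument.

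The only delicate point is verifying the sign of $\p_s f$ and handling the case $R(x)\le 1$: there the autonomous logistic ODE has $0$ as its global attractor (the positive equilibrium disappears), which is exactly what the $(\cdot)_+$ notation encodes, so no extra work is needed. I expect the monotonicity calculation and the clean identification of the logistic equilibrium in the positive-part form to be the only substantive steps; everything else reduces to standard scalar ODE comparison.
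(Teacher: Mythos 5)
Your proposal is correct and follows essentially the same route as the paper: trap $S(t,x)$ between $Nl_1-\varepsilon$ and $l_2+\varepsilon$ for large $t$ using Propositions \ref{prop3.1} and \ref{prop3.2}, compare the resulting scalar ODE for $I(t,x)$ against the autonomous logistic equation whose attractor is the positive part $((R-1)s_0-m)_+$, and let $\varepsilon\to 0^+$. Your explicit verification of the monotonicity $\p_s f\ge 0$ is a detail the paper leaves implicit (it simply writes the differential inequality directly), but it is the correct justification for that step, and your computation $\p_s f=\beta y(m+y)/(m+s+y)^2$ checks out.
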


\begin{proof}
    By Proposition \ref{prop3.1} and Proposition \ref{prop3.2}, we have 
 \begin{equation}\label{infsup}
     Nl_1\le \liminf_{t\to\infty} S(t, x)\le \limsup_{t\to\infty} S(t, x)\le l_2, \quad \text{uniformly for}\ x\in\bar\Omega.
 \end{equation}
So for every $\varepsilon>0$, there is $t_{\varepsilon}\gg 1$ such that 
$$
Nl_1-\varepsilon< S(t,x)<l_2+\varepsilon,\quad \forall\, t>t_{\varepsilon},\, x\in\bar\Omega,
$$
which in turn implies that
$$
\p_tI\le\Big[\frac{\beta (l_2+\varepsilon) }{m+(l_2+\varepsilon)+I}-\gamma\Big]I,\ \quad x\in\bar{\Omega},\ t\ge t_{\varepsilon}.  
$$
Hence, by the comparison principle for ODEs, we have that 
$$
\limsup_{t\to\infty}I(t,x)\le \Big[(R(x)-1)(l_2+\varepsilon)-m(x)\Big]_+\chi_{\{I_0>0\}}, \quad \forall\, x\in\bar{\Omega}.
$$
Taking $\varepsilon\to 0^+$, we get 
\begin{equation*}
    \limsup_{t\to\infty}I(t,x)\le[(R-1)l_2-m]_+\chi_{\{I_0>0\}},  \quad \forall\, x\in\bar{\Omega}.
\end{equation*}
Similarly, we can proceed to establish that 
\begin{equation*}
\liminf_{t\to\infty}I(t,x)\ge [(R(x)-1)Nl_1-m(x)]_+\chi_{\{I_0>0\}}, \quad \forall\, x\in\bar{\Omega}.
\end{equation*}
Thus, the desired results are proved.
\end{proof}

\begin{lem}\label{lemma_S}  Fix $d_S>0$ and suppose that $1/(\beta-\gamma)\in L^1(H^+\cap \{I_0>0\})$.   Let $(S,I)$ be the solution of \eqref{dI=0-model}.   Then we have
\begin{equation}\label{Sc}
     \lim_{t\to\infty}\Big\|S(t,\cdot)-\frac{1}{|\Omega|}\int_{\Omega}S(t,\cdot)\Big\|_{\infty}=0.
\end{equation}
\end{lem}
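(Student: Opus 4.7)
The plan is to adapt the Lyapunov-function approach from the proof of Theorem \ref{th2.5} to the present setting, where $\beta$ need not exceed $\gamma$ throughout $\bar\Omega$. Write $A := H^+ \cap \{I_0 > 0\}$ and $B := (H^0 \cup H^-) \cap \{I_0 > 0\}$, and set $Z := \gamma(m+I)/(\beta-\gamma)$ on $A$, so that on $A$ the pair $(S,Z)$ enjoys the same cooperative structure as in the proof of Theorem \ref{th2.5}. Choose $L := 1 + \sup_{t\ge 0}\|S(t,\cdot)\|_\infty$, which is finite by Proposition \ref{prop3.2} and local well-posedness. I will work with
\begin{equation*}
W(t) := \tfrac{1}{2}\int_\Omega S^2 + \int_A \frac{(\beta-\gamma)Z^2}{2\gamma} + L\int_B I.
\end{equation*}
The hypothesis $1/(\beta-\gamma)\in L^1(A)$ together with the uniform bound on $m+I$ gives $\int_A \frac{(\beta-\gamma)Z^2}{2\gamma}=\int_A \frac{\gamma(m+I)^2}{2(\beta-\gamma)}<\infty$, so $W$ is well-defined and nonnegative.

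Differentiating $W$ (the Neumann boundary condition eliminates the boundary term from integration by parts) and writing $F := \gamma I - \beta SI/(m+S+I)$, the same algebra as in the proof of Theorem \ref{th2.5} yields on $A$ the cancellation
\begin{equation*}
\int_A SF + \int_A (\beta-\gamma)Z(S-Z)\frac{I}{m+S+I} = -\int_A \frac{(\beta-\gamma)(S-Z)^2 I}{m+S+I}.
\end{equation*}
On the remaining set $\{I_0=0\}$ there is no contribution since $I\equiv 0$ there. On $B$ a direct manipulation yields the pointwise identity
\begin{equation*}
SF + L\,\partial_t I = \frac{(S-L)\,I\bigl[(\gamma-\beta)S + \gamma(m+I)\bigr]}{m+S+I}.
\end{equation*}
Since $\beta\le\gamma$ on $B$ the bracketed quantity is nonnegative, while $S-L\le 0$ by the choice of $L$, so the integral over $B$ is $\le 0$. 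Combining these observations,
\begin{equation*}
\frac{dW}{dt} \le -d_S\int_\Omega|\nabla S|^2, \qquad t>0.
\end{equation*}
Integrating and using $W\ge 0$ gives $\int_0^\infty\int_\Omega|\nabla S(t,\cdot)|^2\,dx\,dt<\infty$.

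Parabolic $L^p$-estimates applied to the $S$-equation, together with the uniform bound on the reaction term $F$ from Proposition \ref{prop3.2}, yield a uniform $C^{1+\alpha,(1+\alpha)/2}$-bound on $S$ on $\bar\Omega\times[1,\infty)$. This makes $t\mapsto\|\nabla S(t,\cdot)\|_{L^2(\Omega)}^2$ Hölder continuous on $[1,\infty)$, and Lemma \ref{lem0} then forces $\|\nabla S(t,\cdot)\|_{L^2(\Omega)}\to 0$. Poincaré's inequality for mean-zero functions gives $\|S(t,\cdot)-\bar S(t)\|_{L^2(\Omega)}\to 0$ with $\bar S(t)=\frac{1}{|\Omega|}\int_\Omega S(t,\cdot)$. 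A standard Arzelà--Ascoli argument (any sequence $t_n\to\infty$ has, by the uniform $C^{1+\alpha}$-bound, a subsequence along which $S(t_{n_k},\cdot)-\bar S(t_{n_k})\to g$ in $C^1(\bar\Omega)$; the $L^2$-convergence forces $g\equiv 0$) upgrades this to uniform convergence, which is \eqref{Sc}.

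The main obstacle is the $B$-term in $dV/dt$, which is nonnegative because $F\ge 0$ on $H^0\cup H^-$ and the decay of $I$ on, say, $H^0\cap M^0$ is only of order $1/t$, so $\int_0^\infty I(t,x)\,dt$ need not be finite. Adding the (eventually decreasing) quantity $L\int_B I$ to the Lyapunov functional turns this obstruction into the pointwise absorption identity displayed above, which is the crux of the argument.
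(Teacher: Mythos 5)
Your proof is correct, and its skeleton is the same as the paper's: the Lyapunov functional built from $\tfrac12\int_\Omega S^2$ plus $\int_{H^+\cap\{I_0>0\}}\frac{\gamma(m+I)^2}{2(\beta-\gamma)}$ (your $\int_A\frac{(\beta-\gamma)Z^2}{2\gamma}$ is the same quantity), the resulting bound $\int_0^\infty\int_\Omega|\nabla S|^2<\infty$, and then parabolic regularity, Lemma \ref{lem0}, and Poincar\'e to get \eqref{Sc}. The one place where you genuinely diverge is the treatment of the contribution from $B=(H^0\cup H^-)\cap\{I_0>0\}$. The paper keeps the functional $V=\tfrac12\int_\Omega S^2+\int_A\frac{\gamma(m+I)^2}{2(\beta-\gamma)}$ and tolerates the extra term $\int_{H^-\cup H^0\cup\{I_0=0\}}\bigl(\frac{\beta S}{S+I+m}-\gamma\bigr)SI$ in $dV/dt$; it controls its time integral by integrating the $I$-equation, which gives $\int_0^t\int\bigl(\frac{\beta S}{S+I+m}-\gamma\bigr)I=\int(I(t)-I_0)$, hence a sign-definite quantity with finite total mass, and then multiplies by the uniform bound on $S$ from Proposition \ref{prop3.2}. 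You instead add $L\int_B I$ with $L>\sup_t\|S\|_\infty$ to the functional, so that the identity $SF+L\,\partial_t I=(S-L)F$ with $F=\frac{I[(\gamma-\beta)S+\gamma(m+I)]}{m+S+I}\ge 0$ on $B$ makes $dW/dt\le -d_S\int_\Omega|\nabla S|^2$ outright. Both devices use exactly the same two facts (the sign of $\frac{\beta S}{m+S+I}-\gamma$ on $H^0\cup H^-$ and the uniform bound on $S$); yours buys a genuinely monotone Lyapunov functional, the paper's keeps the functional simpler at the cost of a separate integrability estimate. One small point worth making explicit in your write-up: the differentiation under the integral sign over $A$ needs the domination $|\partial_t(\gamma(m+I)^2/(2(\beta-\gamma)))|\le C/(\beta-\gamma)\in L^1(A)$, which your hypotheses do supply.
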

\begin{proof}
Define 
\begin{equation}\nonumber
V(t):=\frac{1}{2}\int_{\Omega}S^2+\int_{H^+\cap \{I_0>0\}}\frac{\gamma (I+m)^2}{2(\beta-\gamma)},\ \ t>0.
\end{equation}
Due to $1/(\beta-\gamma)\in L^1(H^+\cap \{I_0>0\})$, $V(t)$ is well defined for all $t>0$. It is easy to check that  
\begin{align}\label{Vp}
    \frac{dV}{dt}=&-d_S\int_{\Omega}|\nabla S|^2-\int_{H^+\cap \{I_0>0\}}\frac{I((\beta-\gamma)S-\gamma I-\gamma m)^2}{(\beta-\gamma)(S+I+m)}\cr
    &\ +\int_{H^-\cup H^0\cup \{I_0=0\}} \left(\frac{\beta S}{S+I+m}-\gamma\right)SI,\ \ \forall\, t>0.
\end{align}

By the equation of $I$, we obtain
$$
\int_0^t\int_{H^-\cup H^0\cup \{I_0=0\}} \left(\frac{\beta S}{S+I+m}-\gamma\right)I = \int_{H^-\cup H^0\cup \{I_0=0\}}(I-I_0).
$$
If $x\in H^-\cup H^0\cup \{I_0=0\}$, then 
$$
\left(\frac{\beta S}{S+I+m}-\gamma\right)I \le 0.
$$
Hence, 
$$
0\le -\int_0^\infty\int_{H^-\cup H^0\cup \{I_0=0\}} \left(\frac{\beta S}{S+I+m}-\gamma\right)I<\infty.
$$
Due to Proposition \ref{prop3.1}, we further have 
$$
0\le -\int_0^\infty\int_{H^-\cup H^0\cup \{I_0=0\}} \left(\frac{\beta S}{S+I+m}-\gamma\right)SI<\infty.
$$
Therefore, integrating \eqref{Vp} over $\Omega\times (0, \infty)$, we obtain 
\begin{equation}\label{gS}
    \int_0^\infty \int_\Omega |\nabla S|^2<\infty.
\end{equation}

 By Proposition \ref{prop3.2}, we have $\sup_{t\ge 0}\|(\beta S(t,\cdot)/(m+S+I)-\gamma)I\|_{\infty}<\infty$. By the regularity theory for parabolic equations, the mapping $[1,\infty)\ni t\mapsto S(t,\cdot)\in C^{1}(\bar{\Omega})$ is Holder's continuous. In particular, $\|\nabla S(t,\cdot)\|_{L^2(\Omega)}$ is H\"older's continuous on $[1,\infty)$. So, we can conclude from \eqref{gS} and Lemma \ref{lem0} that 
\begin{equation}\label{SS}
    \lim_{t\to\infty}\||\nabla S(t,\cdot)|\|_{L^2(\Omega)}=0.
\end{equation}
By virtue of \eqref{SS}, the Poincar\'e inequality and the regularity of $S(t,\cdot)$, we can proceed as in \eqref{Eqa-2} to establish that 
\begin{equation}\nonumber
    \lim_{t\to\infty}\Big\|S(t,\cdot)-\frac{1}{|\Omega|}\int_{\Omega}S(t,\cdot)\Big\|_{C^1(\bar{\Omega})}=0,
\end{equation}
 which proves \eqref{Sc}.
\end{proof}

\vskip10pt
With the above preparation, we are now ready to verify Proposition \ref{prop3}.

\begin{proof}[Proof of Proposition \ref{prop3}]
Since $0\le\int_\Omega S\le N$ for all $t\ge 0$, there exist constants $\zeta, \eta\in [0, N/|\Omega|]$ with $\zeta\le \eta$ such that 
 \begin{equation}\label{limS}
\zeta|\Omega|=\liminf_{t\to\infty} \int_\Omega S(t,\cdot)\le \limsup_{t\to\infty}\int_\Omega S(t,\cdot)= \eta|\Omega|.  
\end{equation}

For any $\epsilon>0$, by Lemma \ref{lemma_S}, there exists $t_\epsilon>0$ such that 
$$
\zeta-\epsilon\le S( t,x)\le \eta+\epsilon, \ \ \forall x\in\bar\Omega, t\ge t_\epsilon. 
$$
Similarly to the proof of Lemma \ref{lem4.2}, for each $x\in\bar\Omega$, 
 \begin{equation}\label{limI}
     ((R-1)\zeta-m)_+\chi_{\{I_0>0\}}\le \liminf_{t\to\infty} I(t, x)\le \limsup_{t\to\infty} I(t, x)\le ((R-1)\eta-m)_+\chi_{\{I_0>0\}}.
 \end{equation}

By  \eqref{total-population} and Fatou's lemma, we have 
 $$
|\Omega|\eta=\limsup_{t\to\infty} \int_\Omega S=N-\liminf_{t\to\infty}\int_\Omega I\le N-\int_\Omega  ((R-1)\zeta-m)_+\chi_{\{I_0>0\}}
 $$
 and 
 $$
|\Omega| \zeta=\liminf_{t\to\infty} \int_\Omega S=N-\limsup_{t\to\infty}\int_\Omega I\ge N-\int_\Omega  ((R-1)\eta-m)_+\chi_{\{I_0>0\}}.
 $$
 It follows that 
\begin{equation}\label{ul}
   |\Omega|\eta-\int_\Omega  ((R-1)\eta-m)_+\chi_{\{I_0>0\}}\le |\Omega|\zeta-\int_\Omega  ((R-1)\zeta-m)_+\chi_{\{I_0>0\}}. 
\end{equation}

Define the function 
$$
f(\tau):=|\Omega|\tau-\int_\Omega  ((R-1)\tau-m)_+\chi_{\{I_0>0\}}, \ \ \tau\in [0, N/|\Omega|].
$$ 
We claim that $f$ is  strictly increasing. Indeed, if $\sigma>0$, then 
\begin{eqnarray*}
 f(\tau+\sigma)-f(\tau)&=&|\Omega| \sigma-\left[\int_\Omega  ((\tau+\sigma)(R-1)-m)_+\chi_{\{I_0>0\}}   - \int_\Omega  (\tau(R-1)-m)_+\chi_{\{I_0>0\}}\right]\\
 &\ge&\sigma\left(|\Omega|-\int_\Omega(R-1)_+\chi_{\{I_0>0\}}\right)>0,
\end{eqnarray*}
where we have used the inequality $a_++b_+\ge (a+b)_+$ and the assumption $|\Omega|-\int_\Omega(R-1)_+\chi_{\{I_0>0\}}>0$. 

Since $f$ is increasing and $\zeta\le \eta$, we have $f(\zeta)\le f(\eta)$. However, \eqref{ul} is equivalent to $f(\eta)\le f(\zeta)$. Hence, $f(\eta)=f(\zeta)$ and $\eta=\zeta=:S^*$ is the unique solution of 
$$
S^*|\Omega|+\int_\Omega ((R-1)S^*-m)_+\chi_{\{I_0>0\}}=N. 
$$
Thus,  Proposition \ref{prop3} is a direct consequence of \eqref{limS} and \eqref{limI}. 
\end{proof}

\begin{proof}[Proof of Corollary \ref{coro2.3}]
It is easy to see that the unique solution $S^*\in (0, N/|\Omega|]$ of the algebraic equation  
    $$
    S^*|\Omega|+\int_\Omega ((R-1)S^*-m)_+\chi_{\{I_0>0\}}=N
    $$
is $S^*=N/|\Omega|$ if and only if 
$$
\mbox{$(R-1)N/|\Omega|-m\le 0$\ \ on $H^+\cap\{I_0>0\}$.}
$$
Then the desired results readily follow from Proposition \ref{prop3}. 
\end{proof}

\section{Numerical simulations}

In this section, we conduct numerical simulations to illustrate the main results. 

For the simulations, we take \(\Omega = (0, 1)\). Let \(S_0 = 2 + \cos(\pi x)\) and \(I_0 = 1.5 + \cos(\pi x)\). Then, \(\int_\Omega (S_0 + I_0) = 3.5 = N\). Define
\[ 
m_0(x) = 
\begin{cases} 
1 - 2x, & \text{if } x \in (0, 0.5), \\ 
0, & \text{if } x \in [0.5, 1). 
\end{cases} 
\]
Recall that \(M^0 := \{x \in \bar{\Omega} : m(x) = 0\}\). Thus, \(M^0 = [0.5, 1]\) when \(m = m_0\).

\subsection{Simulations for system \eqref{ds=0-model}}
In this subsection, we perform numerical simulations for the system \eqref{ds=0-model} to verify and complement the main theoretical results presented in Subsection 2.1.

{\bf Simulation 1.}  In this simulation, we examine the case where \(\beta > \gamma\) on \(\Omega\). We set \(d_I = 1\), \(\beta = k + x\), \(\gamma = 5 + x\), and \(m = m_0\), where \(k\) is a positive constant. Define
\[
f(k) := \int_0^1 \frac{m\gamma}{\beta - \gamma}.
\]
We find that \(f(5.1) \approx 12.9167\), \(f(5.37) \approx 3.4910\), and \(f(6) \approx 1.2917\). Fig.\ref{fig:1}(A) illustrates the scenario where \(\beta = 6 + x\), resulting in \(f(k) < N = 3.5\). In this case, \(S\) converges to a positive function, and \(I\) converges to a positive constant. Fig.\ref{fig:1}(B) shows the scenario where \(\beta = 5.1 + x\), with \(f(k) > N\). Here, \(S\) converges to a positive function, and \(I\) converges to zero. Fig.\ref{fig:1}(A)-(B) are consistent with the predictions from Theorem \ref{th2.1}.

To demonstrate Corollary \ref{cor1}, we set \(k = 5.2\) and choose \(d_I = 0.1, 0.21\) or 0.5. Then, \(N \leq \int m\gamma/(\beta - \gamma)\). Fig.\ref{fig:1}(C) shows that \(S\) converges to zero in a subset of $M^0$, when $d_I$ is small ($d_I=0.1$ or 0.21). 
If $d_I$ is large ($d_I=0.5$), then $S$ persists in $\Omega$. This is consistent with Corollary \ref{cor1}.

\begin{figure}
     \centering
     \begin{subfigure}[b]{0.32\textwidth}
         \centering
          \includegraphics[scale=.3]{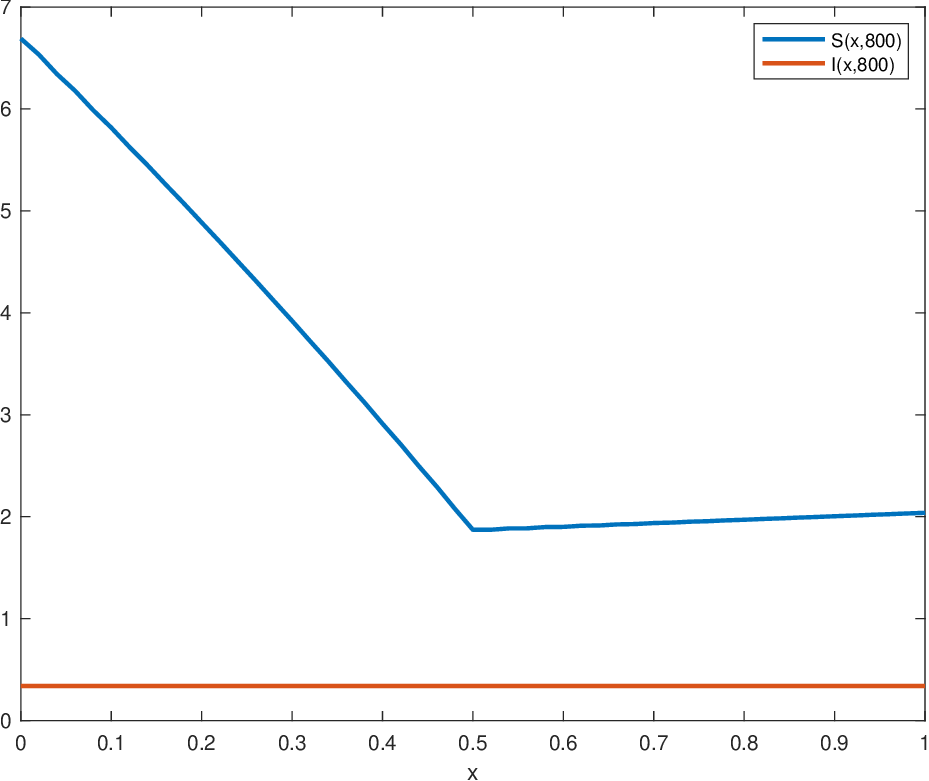}
        \caption{{$d_I=1,\,\beta=6+x,\, m=\quad m_0$}}
         \label{fig:a14}
     \end{subfigure}
     \begin{subfigure}[b]{0.32\textwidth}
        \centering
     \includegraphics[scale=.3]{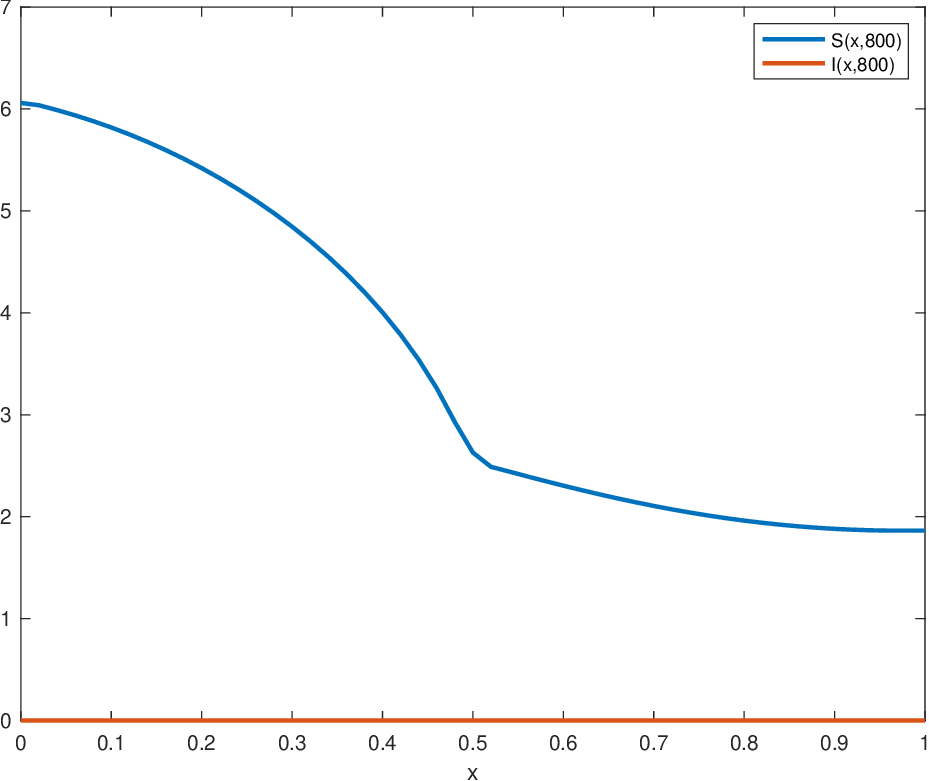}
    \caption{{$d_I=1,\, \beta=5.1+x,\, m= m_0$}}
         \label{fig:b14}
     \end{subfigure}
     \begin{subfigure}[b]{0.32\textwidth}
        \centering
     \includegraphics[scale=.3]{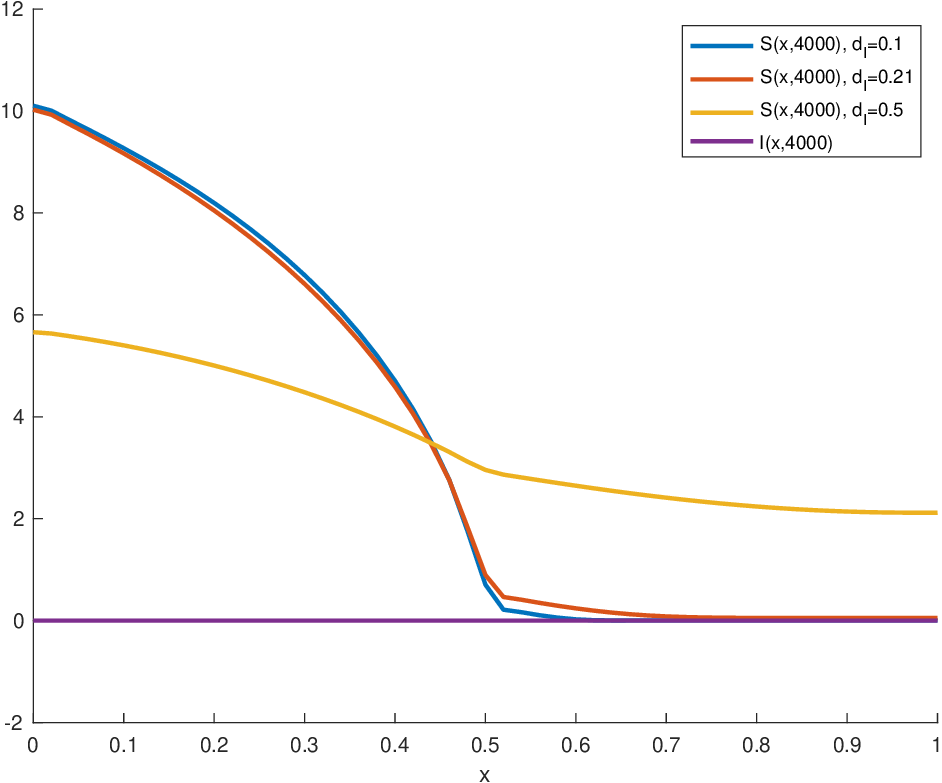}
  \caption{{\footnotesize$d_I=0.1, 0.21, \text{or}\, 0.5, \beta=5.2+x,m=m_0$}} \label{fig:b15}
     \end{subfigure}
     \caption{Simulation of model \eqref{ds=0-model} with $\beta>\gamma$.}
     \label{fig:1}
\end{figure}

\begin{figure}
     \centering
     \begin{subfigure}[b]{0.32\textwidth}
         \centering
          \includegraphics[scale=.3]{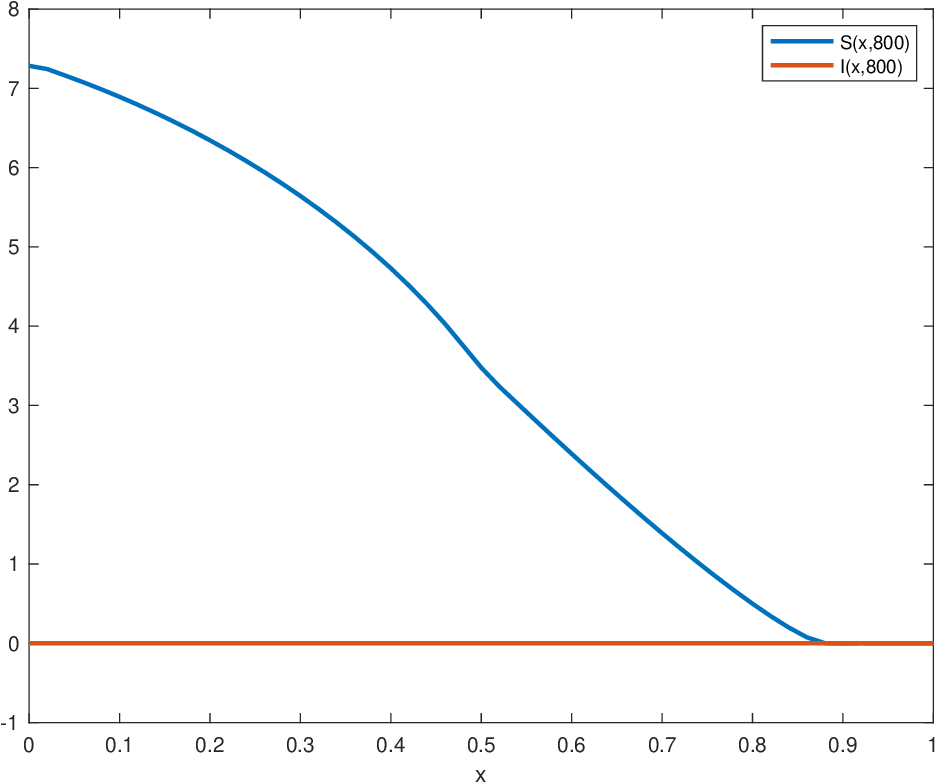}
        \caption{$\beta=\max\{5+x, 4+3x\}$, $m=m_0$}
     \end{subfigure}
     \begin{subfigure}[b]{0.32\textwidth}
        \centering
     \includegraphics[scale=.3]{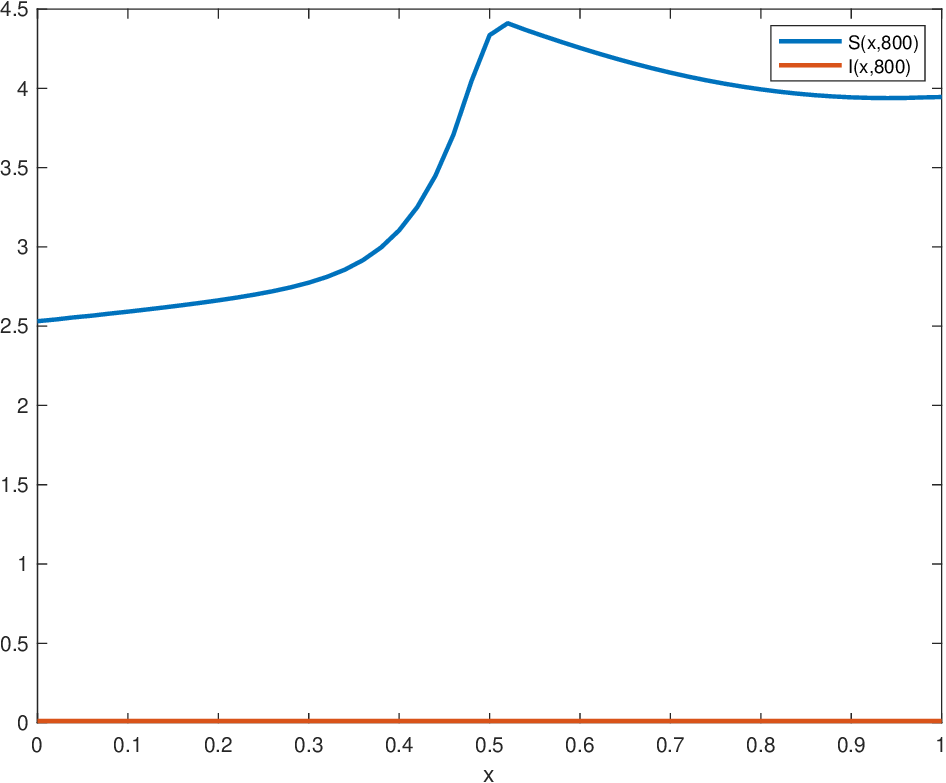}
             \caption{$\beta=\max\{5+x, 7-3x\}$, $m=m_0$}
     \end{subfigure}
               \begin{subfigure}[b]{0.32\textwidth}
         \centering
          \includegraphics[scale=.3]{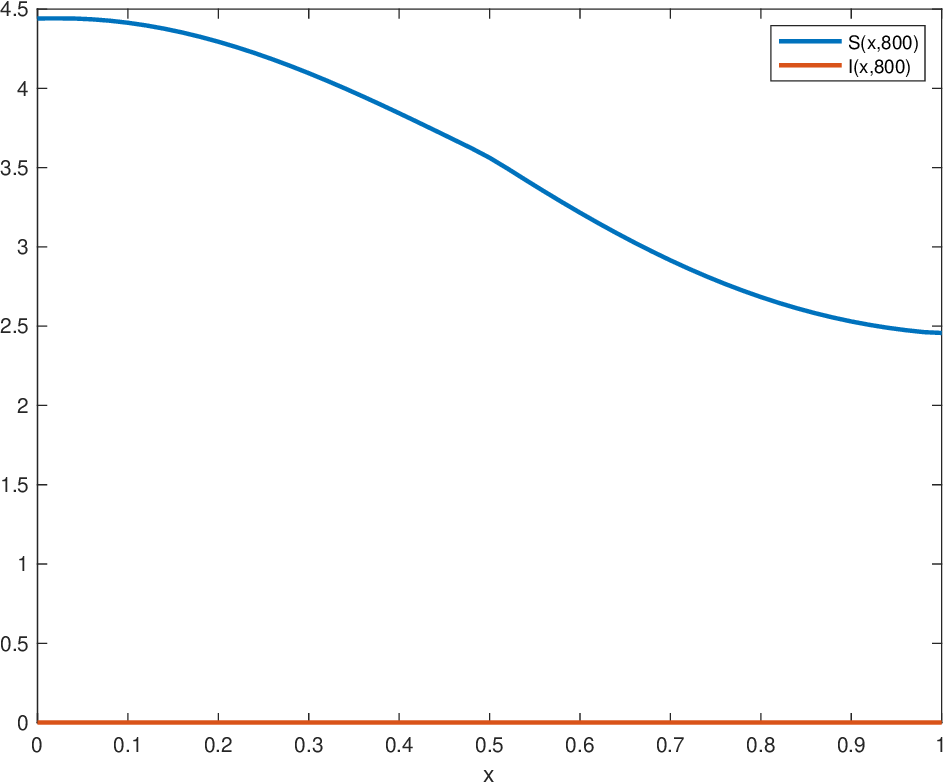}
         \caption{$\beta=\max\{5+x, 4+3x\}$, $m=1$}
     \end{subfigure}
     \caption{Simulation of model \eqref{ds=0-model} with $\beta\ge\gamma$ and $H^0\neq\emptyset$.}
     \label{fig:2}
\end{figure}

\textbf{Simulation 2.} In this simulation, we examine the case where \(\beta \ge \gamma\) and \(H^0 = \{x \in \bar{\Omega} : \beta(x) = \gamma(x)\}\) is non-empty. First, we set \(d_I = 1\), \(\gamma = 5 + x\), \(\beta = \max\{5 + x, 4 + 3x\}\), and \(m = m_0\). Thus, we have
\[
H^+ = (0.5, 1], \quad H^0 = [0, 0.5] \ \ \text{and} \ \ M^0 \cap H^+ = (0.5, 1).
\]
Fig\ref{fig:2}(A) shows that \(I\) converges to zero, and \(S\) converges to \(S^*\), where \(\{x \in \bar{\Omega} : S^*(x) = 0\}\) is a proper subset of \(M^0\cap H^+\). In this scenario, the disease depletes the susceptible population in part of the high-risk region where no saturated incidence effect exists. This agrees well with Theorem \ref{th2.2} and Corollary \ref{cor1}.

Next, we set \(d_I = 1\), \(\gamma = 5 + x\), \(\beta = \max\{5 + x, 7 - 3x\}\), and \(m = m_0\). Thus, we have
\[
H^+ = [0, 0.5), \quad H^0 = [0.5, 1]\ \ \text{and} \ \ M^0 \cap H^+ = \emptyset.
\]
Fig.\ref{fig:2}(B) shows that \(I\) still converges to zero, similar to the observation in Fig.\ref{fig:2}(A). However, in such a case, \(S\) converges to a positive function, and the infection does not deplete the disease in any part of the high-risk region.  This again agrees with Theorem \ref{th2.2}.

Finally, we use the same \(\beta\) and \(\gamma\) as in Fig.\ref{fig:2}(A) but set \(m = 1\), resulting in \(M^0 \cap H^+ = \emptyset\). Fig.\ref{fig:2}(C) shows that \(S\) also converges to a positive function in this scenario. Combining the results from Fig.\ref{fig:2}(A)-(C), we find that the region \(H^0\) may aid in eradicating the disease. Moreover, whether the susceptible population will be depleted in some subregion of the high-risk area depends on whether \(M^0 \cap H^+\) is empty or not. This simulation result is also consistent with Theorem \ref{th2.2}.

\textbf{Simulation 3.} In this simulation, we explore the case where \(H^- := \{x \in \bar{\Omega} : \beta(x) < \gamma(x)\} \neq \emptyset\). We fix \(d_I = 1\) and \(\gamma = 5 + x\). We choose \(m = m_0\) and \(\beta = k - x\) for \(k = 5.5, 6, 6.5\), resulting in \(H^+ = [0, 0.25), [0, 0.5), [0, 0.75)\), respectively. Fig.\ref{fig:30}(A) suggests that the limit of \(S\) is positive on \([0, 1]\) for \(k = 5.5, 6\). However, when \(k = 6.5\), it appears that \(S\) converges to zero in a subset of \(H^+ \cap M^0 = [0.5, 0.75)\).

Next, we choose \(m = m_0\) and \(\beta = k + 2x\) for \(k = 4.25, 4.5, 4.75\), resulting in \(H^+ = (0.75, 1], (0.5, 1], (0.25, 1]\), respectively. Although \(H^+ \cap M^0\neq \emptyset\) for all such \(k\), \(S\) converges to zero in a subset only when \(k = 4.75\), as shown in Fig.\ref{fig:30}(B).

Finally, if \(m = 1\) and \(\beta = k - x\) for \(k = 5.5, 6, 6.5\), Fig.\ref{fig:30}(C) shows that \(S\) always converges to a positive function since \(H^+ \cap M^0 = \emptyset\). In this simulation, \(I\) always converges to zero, consistent with Theorem \ref{th2.3}. 

\begin{figure}
     \centering
     \begin{subfigure}[b]{0.32\textwidth}
         \centering
          \includegraphics[scale=.3]{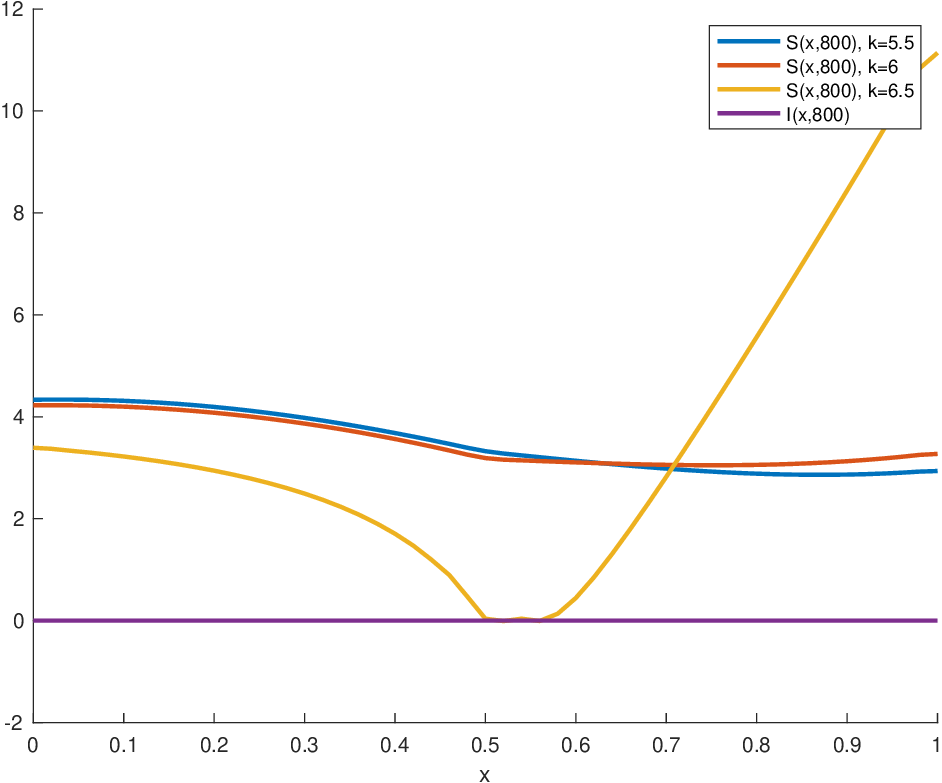}
        \caption{$\beta=k-x$, $m=m_0$}
     \end{subfigure}
     \begin{subfigure}[b]{0.32\textwidth}
        \centering
     \includegraphics[scale=.3]{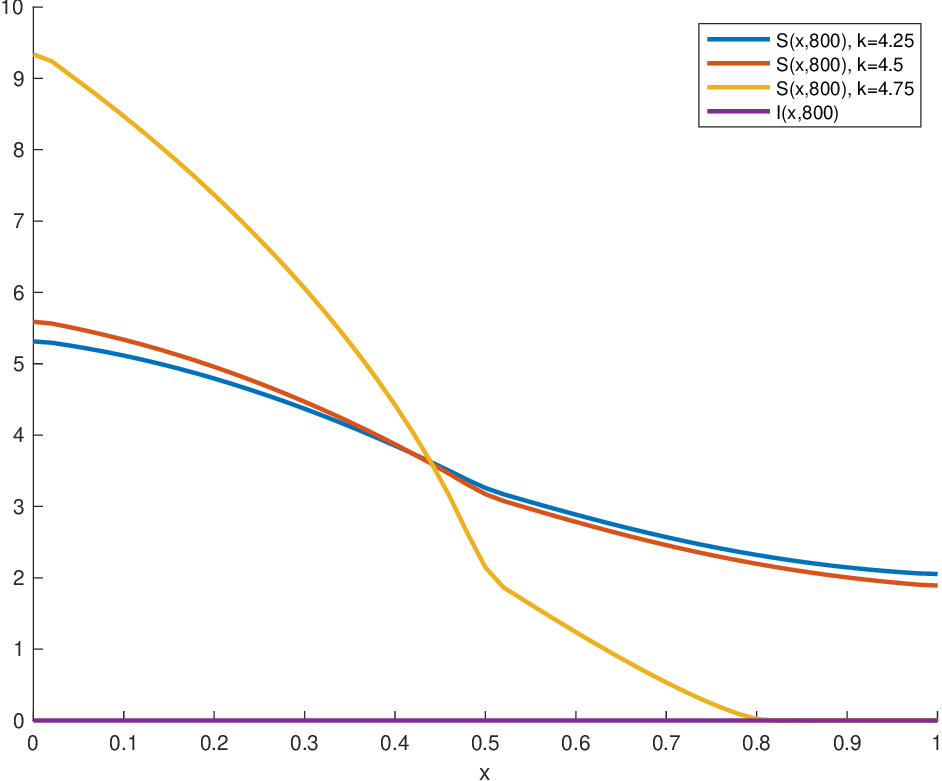}
             \caption{$\beta=k+2x$, $m=m_0$}
     \end{subfigure}
               \begin{subfigure}[b]{0.32\textwidth}
         \centering
          \includegraphics[scale=.3]{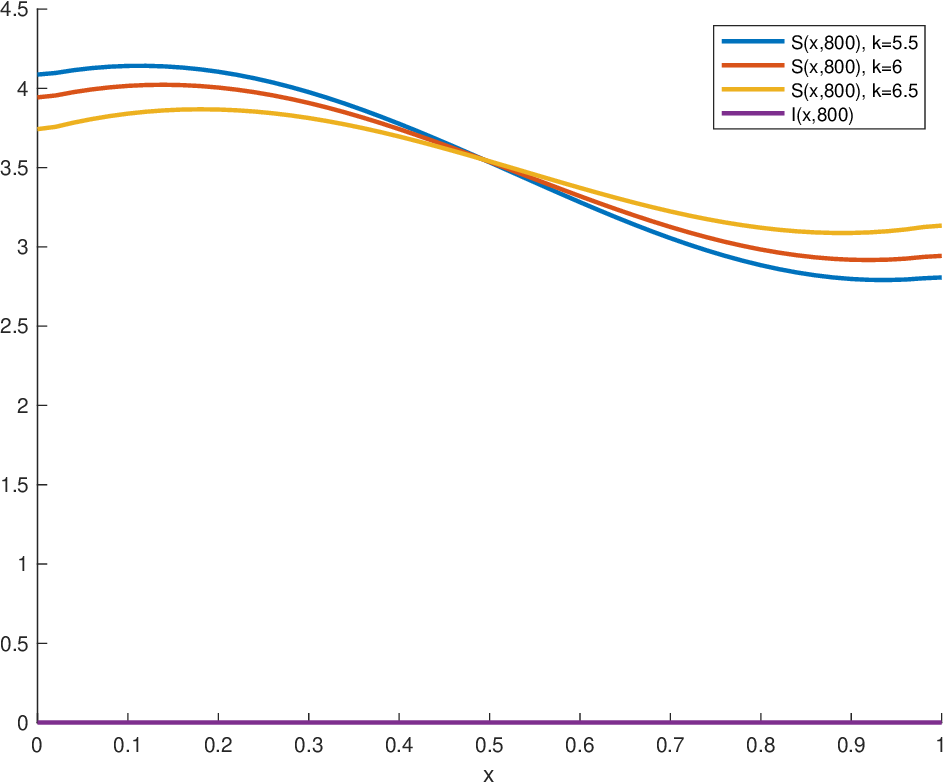}
         \caption{$\beta=k-x$, $m=1$}
     \end{subfigure}
     \caption{Simulation of model \eqref{ds=0-model} with $H^-\neq \emptyset$.}
     \label{fig:30}
\end{figure}

\subsection{Simulations for system \eqref{dI=0-model}}
In this subsection, we perform numerical simulations for the system \eqref{dI=0-model} to verify and complement the main theoretical results presented in Subsection 2.2.

\textbf{Simulation 4.} In this simulation, we explore the cases where \(\beta > \gamma\) and \(\beta < \gamma\) on \(\Omega\). We set \(\beta = 1 + x\) and \(d_S=m = 1\).

First, we choose \(\gamma = 0.8\) such that \(\beta > \gamma\). According to Theorem \ref{th2.5}, and noting that \(m\) is positive, the behavior of the system depends on the size of \(N\). If \(N\) is small, \((S, I)\) converges to \((N/|\Omega|, 0)\). If \(N\) is large, \((S, I)\) converges to \((\hat{S}, \hat{I})\) with \(\hat{S} > 0\) and \(\hat{I} \not\equiv 0\) as \(t \to \infty\). To verify this, we solve the system \eqref{dI=0-model} with initial data \((aS_0, aI_0)\). When \(a = 0.1\), Fig.\ref{fig:di0} (A) shows that \((S, I)\) converges to \((N/|\Omega|, 0)\). When \(a = 0.5\), Fig.\ref{fig:di0}(B) shows that \(I\) persists in some region of \(\Omega\).

Next, we choose \(\gamma = 2.5\) such that \(\beta < \gamma\). Theorem \ref{th2.6} implies that \((S, I)\) always converges to \((N/|\Omega|, 0)\). To confirm this, we solve the system \eqref{dI=0-model} with initial data \((aS_0, aI_0)\). Fig.\ref{fig:di0}(C) shows that \(S\) always converges to a positive constant and \(I\) converges to zero.

\begin{figure}
     \centering
     \begin{subfigure}[b]{0.32\textwidth}
         \centering
          \includegraphics[scale=.3]{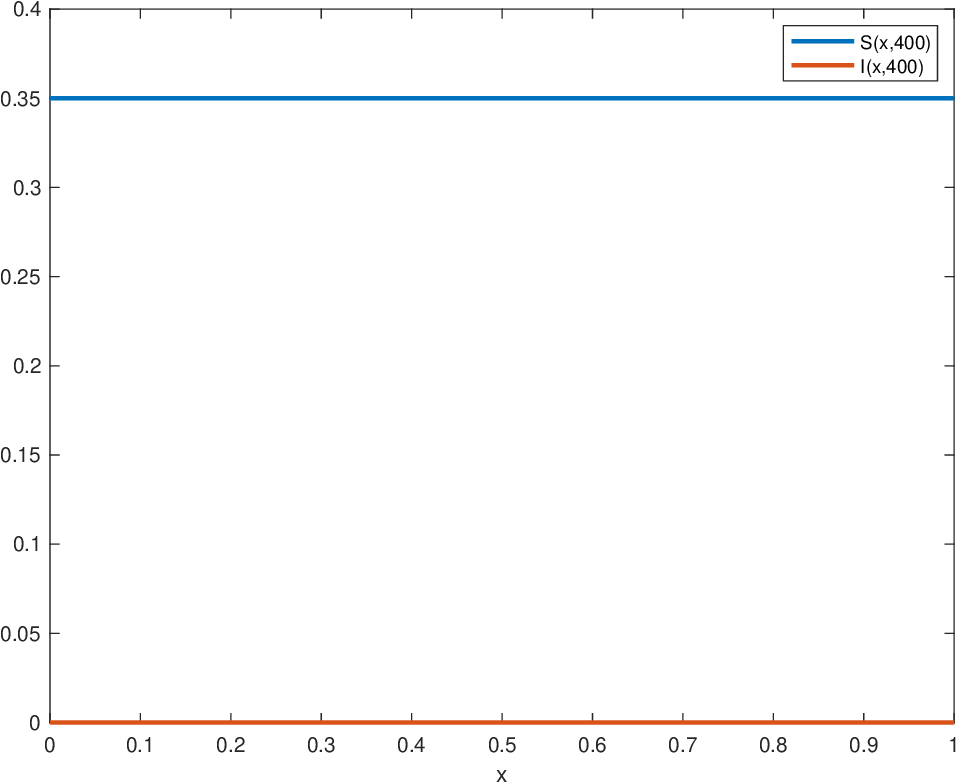}
        \caption{$\beta>\gamma$, $N$ small}
     \end{subfigure}
     \begin{subfigure}[b]{0.32\textwidth}
        \centering
     \includegraphics[scale=.3]{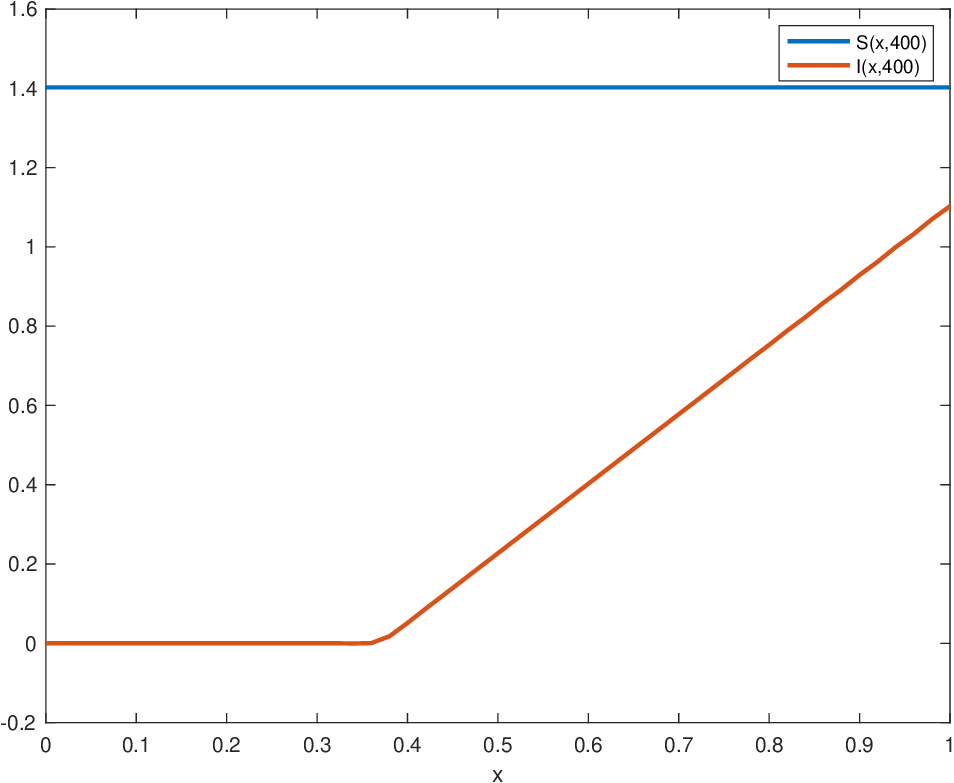}
             \caption{$\beta>\gamma$, $N$ large}
     \end{subfigure}
               \begin{subfigure}[b]{0.32\textwidth}
         \centering
          \includegraphics[scale=.3]{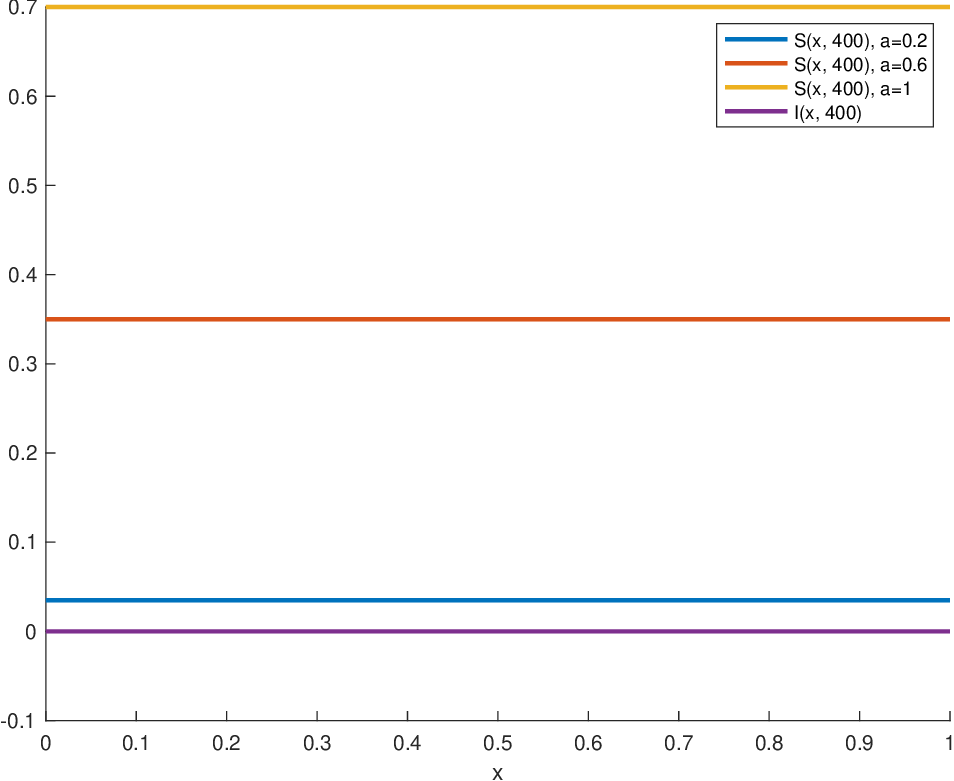}
         \caption{$\beta<\gamma$, initial data $(aS_0, aI_0)$}
     \end{subfigure}
     \caption{Simulation of model \eqref{dI=0-model}. Left two figures: $\beta=1+x$, $\gamma=0.8$, $m=1$. Right figure: $\beta=1+x$, $\gamma=2.5$, $m=1$. }
     \label{fig:di0}
\end{figure}

\textbf{Simulation 5.} In this simulation, we explore the scenario where \(\beta - \gamma\) changes sign on \(\Omega\). We set \(d_S = \gamma = 1\) and \(m = m_0\).

First, consider \(\beta = \sqrt{x} + 0.5\). In this case, \(H^+ = (0.25, 1]\) and \(H^- = [0, 0.25)\). Note that \(M^0 = [0.5, 1]\). For any \(S^* > 0\), we have:
\[
[0.5, 1] \subseteq \{x \in \bar{\Omega} : ((R(x) - 1)S^* - m(x))_+ > 0\}.
\]

We solve the system \eqref{dI=0-model} with initial data \((aS_0, aI_0)\) for different values of \(a\). Fig.\ref{fig:4}(A) confirms that the infection persists on \([0.5, 1]\) and becomes extinct at least in the low-risk region \(H^- = [0, 0.25]\) for any value of \(a\). This is consistent with Lemma \ref{lem4.2}, which states that \(\liminf_{t \to \infty} I(\cdot, t) \ge ((R - 1)Nl_1 - m)_+ \chi_{\{I_0 > 0\}}\). Note that Proposition \ref{prop3} is inapplicable here because \(\int_{0.5}^1 \frac{1}{\beta - \gamma} = \infty\) for the chosen parameters.

Next, consider \(\beta = 1.5 - \sqrt{x}\). In this case, \(H^+ = [0, 0.25)\) and \(H^- = (0.25, 1]\). Here, \(H^+ \subset M^0\). It is straightforward to verify that \((R - 1)N/|\Omega| - m \leq 0\) on \(H^+\) if and only if \(N \leq 2\).
We solve the system \eqref{dI=0-model} with initial data \((aS_0, aI_0)\) for different values of \(a\) (\(N = 2\) if \(a \approx 0.57\)). If \(a = 1\), there exists \(x \in H^+ \cap \{I_0 > 0\}\) such that \((R(x) - 1)N/|\Omega| - m(x) > 0\). In this case, Fig.\ref{fig:4}(B) shows that the infection persists in a subset of \(H^+\), which is consistent with Corollary \ref{coro2.3}, even though \(\int_{0}^{0.25} \frac{1}{\beta - \gamma} = \infty\). If \(a = 0.5\), then \((R - 1)N/|\Omega| - m \leq 0\) on \(H^+\), and \(I\) converges to zero, as shown in Fig.\ref{fig:4}(B), consistent with Corollary \ref{coro2.3}.
Fig.\ref{fig:4}(C) shows \(\int_0^1 I(x, 600) \, dx\) as a function of \(a\). It reveals a critical value of \(a\) between 0.5 and 1, below which the infected population converges to zero and above which it persists. Moreover, the size of the infected population increases as the total population size increases.

\begin{figure}
     \centering
     \begin{subfigure}[b]{0.32\textwidth}
         \centering
          \includegraphics[scale=.3]{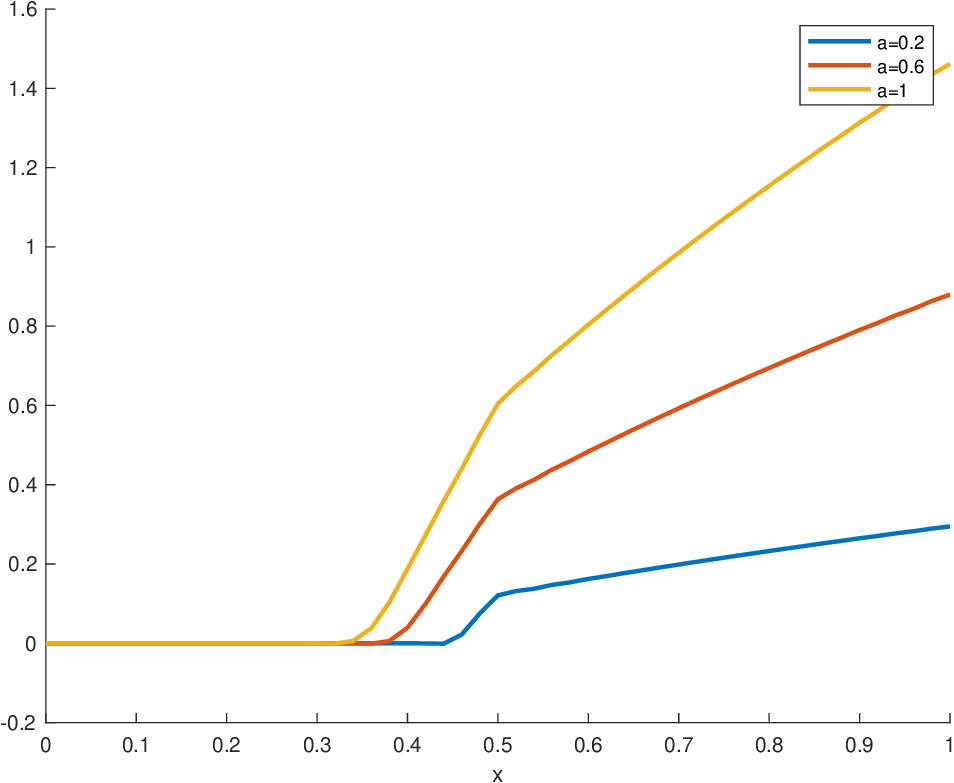}
        \caption{$\beta=\sqrt{x}+0.5$}
     \end{subfigure}
     \begin{subfigure}[b]{0.32\textwidth}
         \centering
          \includegraphics[scale=.3]{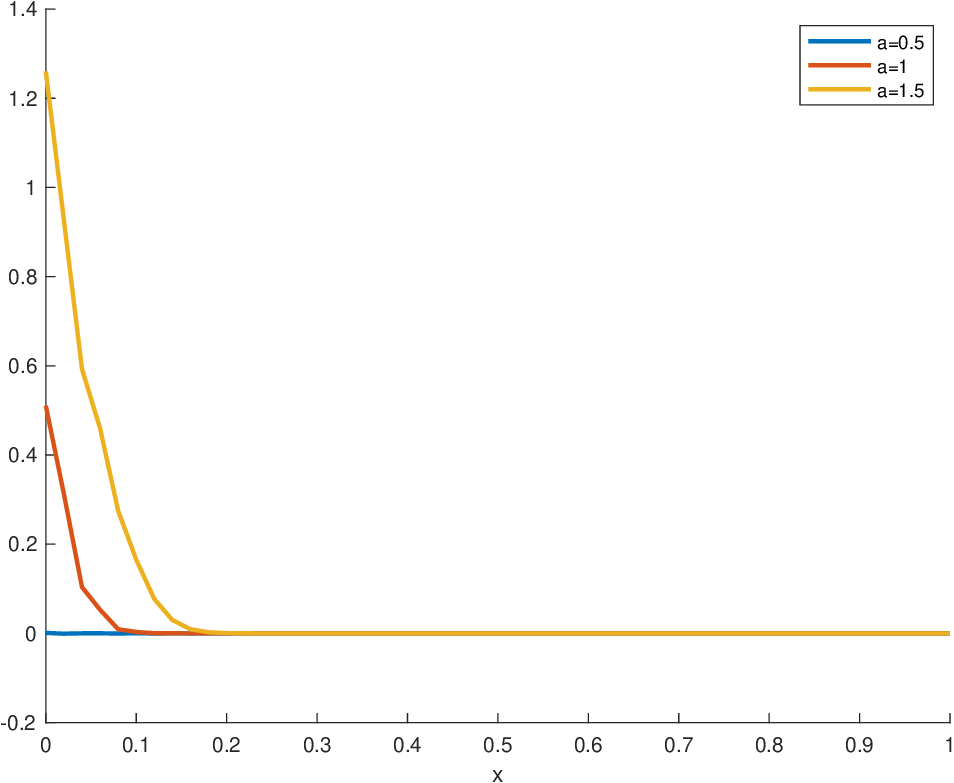}
         \caption{$\beta=1.5-\sqrt{x}$}
     \end{subfigure}
\begin{subfigure}[b]{0.32\textwidth}
         \centering
          \includegraphics[scale=.3]{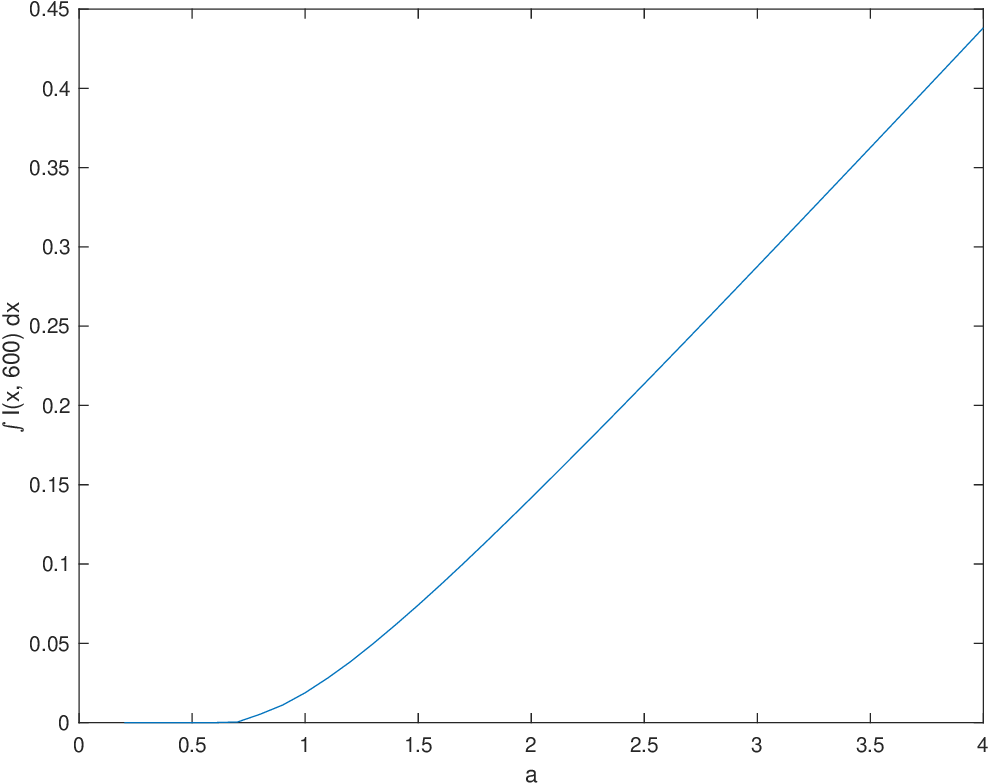}
         \caption{$\beta=1.5-\sqrt{x}$}
     \end{subfigure}
  \caption{Simulation of model \eqref{dI=0-model} with $\gamma=1$, $m=m_0$, and initial data $(aS_0, aI_0)$.  Fig.(A)-(B) show $I(x, 600)$; Fig.(C) shows $\int_0^1 I(x, 600)dx$ as a function of  $a$.}
     \label{fig:4}
\end{figure}

\section{Discussion and Conclusion}
The SIS model serves as one of the most fundamental mathematical frameworks for understanding infectious disease dynamics. 
This paper presents a comprehensive analysis of the SIS systems \eqref{ds=0-model} and \eqref{dI=0-model}, offering insights into the dynamics of infectious disease spread and control. 

\subsection{For system \eqref{ds=0-model}: control on the susceptible's movement}
System  \eqref{ds=0-model} reflects the strategic movement constrains imposed to the susceptible population. If  the habitat is entirely composed of high-risk areas ($\beta>\gamma$ on $\Omega$) and the saturated incidence exists (i.e. $m\not\equiv 0$), Theorem \ref{th2.1} indicates that the fate of a disease—whether it diminishes or persists—depends significantly on the total population size \(N\). Specifically, the disease will persist across the habitat in the long term if \(N\) exceeds the threshold value \(\int_{\Omega}\frac{m\gamma}{\beta-\gamma}\). Conversely, if \(N\) falls below this threshold (considering the saturated incidence factor), the disease is destined for eradication. In contrast, if no saturated incidence effect exists (i.e., \(m\equiv0\)), \cite[Theorem 4.2(ii)]{salako2024degenerate} demonstrates that the disease will persist throughout the habitat in the long run, regardless of the total population size.

Theorem \ref{th2.2} suggests that in habitats consisting solely of high-risk and moderate-risk locations, disease extinction can be achieved in the long run if the movement of susceptible individuals is sufficiently restricted. Similarly, Theorem \ref{th2.3} demonstrates that the presence of low-risk areas, coupled with movement restrictions, can lead to the eventual extinction of the disease. Theorems \ref{th2.2}-\ref{th2.3} imply that controlling the mobility of susceptible individuals is a viable strategy for disease eradication as long as moderate-risk or low-risk locations are created.


Furthermore, the presence of saturated incidence effects significantly impacts the distribution of the susceptible population. The susceptible population can persist in areas where the risk is moderate or low, as well as in regions where saturated incidence effects are present. In particular, when this effect is consistently present throughout the habitat, the susceptible population will persist on the entire habitat. In contrast, Proposition \ref{prop1} and Corollary \ref{cor1} suggest that the susceptible population cannot occupy all the high-risk places where the saturated incidence effect is negligible, particularly with low infected population mobility.


\subsection{For system \eqref{dI=0-model}: control on the infected's movement}
System \eqref{dI=0-model} reflects the strategic movement restrictions imposed to the infected population.
According to Theorem \ref{th2.4},  restricting the movement of the infected population allows the susceptible population to occupy the entire habitat. This outcome holds regardless of transmission and recovery rates or the presence of a saturated incidence effect.
\ Theorem \ref{th2.5} demonstrates that in high-risk environments, restricting the movement of infected individuals can result in a homogeneous distribution of the susceptible population. The persistence or extinction of the infectious disease, however, depends on the total population size and the presence of the saturated incidence effect. In the absence of this effect, the disease persists. Conversely, when the saturated incidence effect is considered, a larger total population size tends to support disease persistence, while a smaller population size increases the likelihood of eventual extinction.

Theorem \ref{th2.6} demonstrates that in low- and moderate-risk regions, infections will eventually be eradicated. However, in high-risk regions with a large total population, eradication of the infection is not possible. Proposition \ref{prop3} and Corollary \ref{coro2.3} further reveal that as the population increases, the area where the disease can persist also expands. This highlights the importance of population control in managing disease spread, particularly in high-risk regions. These results  surprisingly differ from \cite[Theorem 4.6(i)]{salako2024degenerate} for the model without saturated incidence effect, which indicates that the disease will persist exactly on high-risk areas when the mobility of the infected population is sufficiently controlled.



\subsection{Control on both susceptible and infected's movements}
If we restrict the movement of both susceptible and infected populations, that is, as \(d_S, d_I\) approach zero in \eqref{SIS2}, formally we are led to the following system of ODEs:
\begin{equation}\label{SISode}
\begin{cases}
\displaystyle
\dfrac{d S}{d t}=-\dfrac{\beta(x) SI}{m(x)+S+I}+\gamma(x) I,\ \ &t > 0,\,x\in\bar\Omega, \vspace{1mm} \\
\displaystyle
\dfrac{d I}{d t}= \dfrac{\beta(x) SI}{m(x)+S+I} -\gamma(x) I, & t > 0, \,x\in\bar\Omega,\vspace{1mm} \\
\displaystyle
S(0,x)+I(0,x)=n(x)\ge 0,&\,x\in\bar\Omega
\end{cases}
\end{equation}
with $\int_\Omega n=N>0$. 

We define the local basic reproduction number \(\mathcal{R}_0(x)\)  of system \eqref{SISode} by
\[
\mathcal{R}_0(x)=\frac{\beta(x) n(x)}{\gamma(x) (m(x)+n(x))},\ \ \ x\in\bar\Omega.
\]
Note that if a point $x\in\bar\Omega$ is of low risk (i.e., $\beta(x)<\gamma(x)$), then $\mathcal{R}_0(x)<1$, regardless of the distribution of the population $n(x)$. Moreover, if $m(x)=0$ for some $x\in\bar\Omega$, then the point $x$ is of low-risk if and only if  $\mathcal{R}_0(x)<1$. According to \cite[Theorem A.1]{GLPZ-2024}, the following statements hold:
\begin{enumerate}
   \item[\rm{(i)}] If \(\mathcal{R}_0(x)\le 1\), then \((S(t,x),I(t,x))\to (n(x),0)\) as \(t\to\infty\);
   \item[\rm{(ii)}] If \(\mathcal{R}_0(x)> 1\), then \((S(t,x),I(t,x))\to \left(\frac{1}{\mathcal{R}_0(x)}n(x),\left(1-\frac{1}{\mathcal{R}_0(x)}\right) n(x)\right)\) as \(t\to\infty\).
\end{enumerate}

These results reveal that the long-term dynamics of system \eqref{SISode} are determined by the local basic reproduction number \(\mathcal{R}_0\). Specifically, at position $x$, if \(\mathcal{R}_0(x) > 1\) (equivalently, \((\beta(x) - \gamma(x))n(x) > m(x)\gamma(x)\)), the disease will persist; if \(\mathcal{R}_0(x) \leq 1\) (equivalently, \((\beta(x) - \gamma(x))n(x) \leq m(x)\gamma(x)\)), the disease will eventually become extinct. 
Thus, in each moderate- or low-risk location, controlling the movement of both susceptible and infected populations can lead to disease eradication. However, in every high-risk location, the success of such control measures in achieving disease extinction depends on factors such as the total population size and the presence of a saturated incidence effect. Specifically, a large total population size and the absence of a saturated incidence effect tend to support disease persistence, whereas a small total population size and the presence of a saturated incidence effect can promote disease extinction.
These findings also suggest that merely creating moderate- or low-risk sub-areas is insufficient for eradicating the disease across the entire habitat, when the movement of both populations is strictly limited. This outcome is similar to the results observed for system \eqref{dI=0-model}, when the movement of only infected population is controlled. In contrast, the results on system \eqref{ds=0-model} suggest that limiting the movement of susceptible population may eliminate the disease if moderate- or low-risk sub-areas exist. Henceforth, restricting the movement of both populations may not be the most effective strategy for eradicating the disease.

Indeed,  targeting the movement of either the susceptible or infected population can sometimes yield better results than addressing the movement of both simultaneously. This can be explained by several possible reasons. (1) Targeted intervention: Focusing on the susceptible population can significantly reduce new infections, as shown in the systematic review by Balcan et al. \cite{Balcan}, where restricting movement into high-infection areas lowers transmission rates. (2) Resource allocation: Managing both populations may dilute intervention effectiveness; concentrating resources on one group allows for more efficient strategies \cite{MBSC}. (3) Behavioral dynamics: The behavior of individuals affects disease spread; if infected individuals are less mobile, their impact on transmission diminishes \cite{Kraemer2020}.  

\subsection{Summary}
Existing research has emphasized the importance of tailored approaches involving multiple intervention strategies to contain the spread of infectious diseases \cite{Balcan,Chinazzi2020,Flaxman2020,Ferretti2020,Lipsitch2003,MBSC}.   This study focuses on the  role of movement restrictions on either susceptible or infected population in controlling infectious diseases. By analyzing systems \eqref{ds=0-model} and \eqref{dI=0-model} compared with \eqref{SISode}, it offers some insights into developing effective public health strategies,  during pandemics like COVID-19. The findings underscore the necessity of adaptive strategies that consider regional variations of transmission risk and targeted interventions to achieve disease eradication.

\vspace{.2in}


\end{document}